\newif\ifpdf\ifx\pdfoutput\undefined\pdffalse\else\pdfoutput=1\pdftrue\fi
  \ifpdf\pdfinfo{/Title      Block coordinate descent for multi-convex optimization
                 /Author     Y. Xu, W. Yin
                }
  \else\usepackage{graphicx}\fi
\newtheorem{condition}{Condition}[section]
\newtheorem{remark}{Remark}[section]
\newtheorem{assumption}{Assumption}
\newcommand{\bfx}{{\bf x}}
\newcommand{\bfy}{{\bf y}}
\newcommand{\bfu}{{\bf u}}
\newcommand{\bfv}{{\bf v}}
\newcommand{\va}{{\mathbf{a}}}
\newcommand{\vb}{{\mathbf{b}}}
\newcommand{\vc}{{\mathbf{c}}}
\newcommand{\ve}{{\mathbf{e}}}
\newcommand{\vg}{{\mathbf{g}}}
\newcommand{\vh}{{\mathbf{h}}}
\newcommand{\vm}{{\mathbf{m}}}
\newcommand{\vu}{{\mathbf{u}}}
\newcommand{\vx}{{\mathbf{x}}}
\newcommand{\vy}{{\mathbf{y}}}
\newcommand{\vz}{{\mathbf{z}}}
\newcommand{\vA}{{\mathbf{A}}}
\newcommand{\vE}{{\mathbf{E}}}
\newcommand{\vM}{{\mathbf{M}}}
\newcommand{\vX}{{\mathbf{X}}}
\newcommand{\vY}{{\mathbf{Y}}}
\newcommand{\cB}{{\mathcal{B}}}
\newcommand{\cC}{{\mathcal{C}}}
\newcommand{\cK}{{\mathcal{K}}}
\newcommand{\cM}{{\mathcal{M}}}
\newcommand{\cN}{{\mathcal{N}}}
\newcommand{\cX}{{\mathcal{X}}}
\newcommand{\cY}{{\mathcal{Y}}}
\newcommand{\RR}{\mathbb{R}} 
\newcommand{\vzero}{\mathbf{0}} 
\newcommand{\dist}{\mathrm{dist}}    
\newcommand{\dom}{{\mathrm{dom}}} 
\newcommand{\prox}{{\mathbf{prox}}} 
\DeclareMathOperator*{\argmin}{arg\,min} 
\DeclareMathOperator*{\argmax}{arg\,max} 
\newcommand{\prev}{{\mathrm{prev}}} 
\newcommand{\bm}[1]{\boldsymbol{#1}}
\newcommand{\st}{\mbox{ s.t. }}
\newcommand{\bc}{\begin{center}}
\newcommand{\ec}{\end{center}}
\newcommand{\bdm}{\begin{displaymath}}
\newcommand{\edm}{\end{displaymath}}
\newcommand{\beq}{\begin{equation}}
\newcommand{\eeq}{\end{equation}}
\newcommand{\bfl}{\begin{flushleft}}
\newcommand{\efl}{\end{flushleft}}
\newcommand{\bt}{\begin{tabbing}}
\newcommand{\et}{\end{tabbing}}
\newcommand{\beqn}{\begin{eqnarray}}
\newcommand{\eeqn}{\end{eqnarray}}
\newcommand{\beqs}{\begin{align*}} 
\newcommand{\eeqs}{\end{align*}}  
\def\myref{{\global\advance\refnum by 1} {\bf \large Lecture \the \refnum. }}
\begin{document}

\title{A globally convergent algorithm for nonconvex optimization based on block coordinate update\thanks{The work is supported in part by NSF DMS-1317602 and ARO MURI W911NF-09-1-0383.}
}
\author{Yangyang Xu\thanks{\url{yangyang.xu@uwaterloo.ca}. Department of Combinatorics and Optimization, University of Waterloo, Waterloo, Canada.
}
\and
Wotao Yin\thanks{\url{wotaoyin@math.ucla.edu}. Department of Mathematics, UCLA, Los Angeles, California, USA.}
}

\date{\today}

\maketitle

\begin{abstract} Nonconvex optimization arises in many areas of computational science and engineering. 
However, most nonconvex optimization  algorithms are only known to have local convergence or subsequence convergence properties. 
In this paper, we propose an algorithm for  nonconvex optimization and establish its global convergence (of the whole sequence) to a critical point. In addition, we give its asymptotic convergence rate and numerically demonstrate its efficiency.

In our algorithm, the variables of the underlying problem are either treated  as one block or  multiple disjoint blocks.  It is assumed that each non-differentiable component of the objective function, or each constraint, applies only to one block  of variables. The differentiable
components of the objective function, however, can involve  multiple blocks  of variables together.

Our algorithm updates one block of variables at a time by minimizing a certain prox-linear surrogate, 
{along with an extrapolation to accelerate its convergence.} The order of update can be either deterministically cyclic or randomly shuffled for each cycle. In fact, our convergence analysis only needs that each block  be updated at least once in every fixed number of iterations. We show its global convergence (of the whole sequence) to a critical point under fairly loose conditions including, in particular, the  Kurdyka-{\L}ojasiewicz (KL) condition, which is satisfied by a broad class of nonconvex/nonsmooth applications. These results, of course, remain valid when the underlying problem is convex.

We apply our convergence results to the coordinate descent iteration for non-convex regularized linear regression, as well as a modified rank-one residue iteration  for nonnegative matrix factorization. We show that both  applications have global convergence. Numerically, we tested our algorithm on nonnegative matrix and tensor factorization problems, where  random shuffling clearly improves to chance to avoid low-quality  local solutions.
\end{abstract}
\begin{keywords}
nonconvex optimization, nonsmooth optimization, block coordinate descent, Kurdyka-{\L}ojasiewicz inequality, prox-linear,  whole sequence convergence
\end{keywords}

\section{Introduction}

In this paper, we consider (nonconvex) optimization problems in the form of
\begin{equation}\label{eq:main}
\begin{split}
&\underset{\bfx}{\mbox{minimize}}~F(\bfx_1,\cdots,\bfx_s)\equiv f(\bfx_1,\cdots,\bfx_s)+\sum_{i=1}^s r_i(\bfx_i),\\[-0.1cm]
&\mbox{subject to}~ \bfx_i\in \cX_i,~i=1,\ldots ,s,
\end{split}
\end{equation}
where variable $\bfx=(\bfx_1,\cdots,\bfx_s)\in\RR^n$ has $s$ blocks, $s\ge 1$, function $f$ is continuously differentiable,  functions $r_i$,  $i=1,\cdots,s$, are proximable\footnote{A function $f$ is proximable if it is easy to obtain the minimizer of $f(x)+\frac{1}{2\gamma}\|x-y\|^2$ for any input $y$ and $\gamma>0$.} but not necessarily differentiable. It is standard to assume that both $f$ and $r_i$ are closed and proper and  the sets $\cX_i$ are closed and nonempty. Convexity is \emph{not} assumed for $f$, $r_i$, or $\cX_i$. By allowing $r_i$ to take the $\infty$-value, $r_i(\bfx_i)$ can incorporate the constraint $\bfx_i\in\cX_i$ since enforcing the constraint is equivalent to minimizing the indicator function of $\cX_i$, and $r_i$ can remain  proper and closed. Therefore, in the remainder of this paper, we do not include the constraints $\bfx_i\in\cX_i$. The functions $r_i$ can incorporate  regularization functions, often used to enforce certain properties or structures in $\vx_i$, for example, the nonconvex $\ell_p$ quasi-norm, $0\le p< 1$, which promotes solution sparsity.

Special cases of \eqref{eq:main} include the following nonconvex problems: $\ell_p$-quasi-norm ($0\le p< 1$) regularized sparse regression problems \cite{natarajan1995sparse, blumensath2009iterative, lai2013improved},  sparse dictionary learning \cite{aharon2006rm, mairal2009online, xu2014patch}, 
matrix rank minimization  \cite{recht2010guaranteed}, matrix factorization with nonnegativity/sparsity/orthogonality regularization \cite{paatero1994positive, lee1999learning, hoyer2004non}, (nonnegative) tensor decomposition \cite{welling2001positive, kolda2009tensor}, and (sparse) higher-order principal component analysis \cite{allen2012sparse}. 

Due to the lack of convexity, standard analysis tools such as convex inequalities and Fej\'er-monotonicity cannot be applied to establish the convergence of the iterate sequence. The case becomes more difficult when the problem is nonsmooth. In these cases, convergence analysis of existing algorithms is typically limited to  objective convergence (to a possibly non-minimal value) or the convergence of a certain  subsequence of iterates to a critical point. (Some exceptions will be reviewed below.) Although whole-sequence convergence is almost always observed, it is rarely proved. This deficiency abates some  widely used algorithms. For example, KSVD  \cite{aharon2006rm}  only has nonincreasing monotonicity of its objective sequence, and iterative reweighted algorithms for sparse and low-rank recovery in \cite{chartrand2008iteratively, mohan2012iterative, lai2013improved} only has subsequence convergence.
Some other methods establish whole sequence convergence by assuming stronger conditions such as local  convexity (on at least a part of the objective) and either unique or isolated limit points, which may be difficult to satisfy or to verify. In this paper, we aim to establish whole sequence convergence with conditions that are provably satisfied by a wide class of functions. 

{Block coordinate descent (BCD) (more precisely, block coordinate update) is very general and widely used for solving both convex and nonconvex problems in the form of \eqref{eq:main} with multiple blocks of variables. 
Since only one block is updated at a time, it has a low per-iteration cost and small memory footprint. Recent literature \cite{nesterov2012efficiency, richtarik2012iteration, saha2013nonasymptotic, lu2013complexity, beck2013convergence, hong2013iteration} 
has found BCD as a viable approach for ``big data'' problems. 
}


\subsection{Proposed algorithm} 
In order to solve \eqref{eq:main}, we propose a block prox-linear (BPL) method, which updates a block of variables at each iteration by minimizing a prox-linear surrogate function.
Specifically, at iteration $k$, a block $b_k\in\{1,\ldots,s\}$ is selected and $\bfx^k=(\bfx_1^k,\cdots,\bfx_s^k)$ is updated as follows:
\begin{equation}\label{eq:ebpg}\left\{
\begin{array}{ll}\bfx_i^{k}=\bfx_i^{k-1},&\text{ if } i\neq b_k,\\[0.2cm]
\bfx_i^{k}\in\underset{\bfx_i}{\argmin}\,\langle \nabla_{\vx_i} f(\bfx_{\neq i}^{k-1},\hat{\bfx}_i^k), \bfx_i-\hat{\bfx}^k_i\rangle+\frac{1}{2\alpha_k}\|\bfx_i-\hat{\bfx}^k_i\|^2+r_{i}(\bfx_i),&\text{ if } i= b_k,
\end{array}\right. \quad \mbox{for}~i=1,\ldots,s,
\end{equation}
where $\alpha_k>0$ is a stepsize and  $\hat{\vx}_i^k$ is the extrapolation\begin{equation}\label{eq:extrp0}\hat{\vx}_i^k={\vx}_i^{k-1}+\omega_k({\vx}_i^{k-1}
-{\vx}_i^{\prev}),
\end{equation}
where $\omega_k\ge0$ is an extrapolation weight and ${\vx}_i^{\prev}$ is the value of $\bfx_i$ before it was updated to  ${\vx}_i^{k-1}$. 
The framework of our method is given in Algorithm \ref{alg:ebpg}. At each iteration $k$, only the block $b_k$ is updated.

\begin{algorithm}\caption{Randomized/deterministic block prox-linear (BPL) method for problem \eqref{eq:main}}\label{alg:ebpg}
\DontPrintSemicolon
{\small
{\bf Initialization:} $\bfx^{-1}=\bfx^0$.\;
\For{$k=1,2,\cdots$}{
Pick $b_k\in\{1,2,\ldots,s\}$ in a deterministic or random manner.
\;
Set $\alpha_k,\,\omega_k$ and let $\bfx^{k}\gets$ \eqref{eq:ebpg}.\;
\If{stopping criterion is satisfied}{
Return $\bfx^{k}$.\;
}
}
}
\end{algorithm}

{While we can simply set $\omega_k=0$, appropriate $\omega_k>0$ can speed up the convergence; we will demonstrate this in the numerical results below. We can set the stepsize $\alpha_k=\frac{1}{\gamma L_k}$ with any $\gamma>1$, where $L_k>0$ is the Lipschitz constant of $\nabla _{\vx_i}f(\vx_{\neq i}^{k-1}, \vx_i)$ about $\vx_i$. When $L_k$ is unknown or difficult to bound, we can apply backtracking on $\alpha_k$ under the criterion: \begin{equation*}f(\vx^{k})\le f(\vx^{k-1})+\langle\nabla_{\vx_i}f(\vx^{k-1}),\vx_i^{k}-\vx_i^{k-1}\rangle +\frac{1}{2\gamma\alpha_k}\|\vx_i^{k}-\vx_i^{k-1}\|^2.
\end{equation*}}

\subsection*{Special cases}
When there is only one block, i.e., $s=1$,  Algorithm \ref{alg:ebpg} reduces to the well-known (accelerated) proximal gradient method (e.g., \cite{NesterovConvexBook2004, BeckTeboulle2009, G-L-2015apg-noncvx}). When the update block cycles from  1 through $s$, 
 Algorithm \ref{alg:ebpg} reduces to the cyclic block proximal gradient (Cyc-BPG) method in \cite{xu2013block, beck2013convergence}. We can also randomly shuffle the $s$ blocks at the beginning of each cycle. 
We demonstrate in section \ref{sec:example} that random shuffling leads to better numerical performance. When the update block is randomly selected following  the probability $p_i>0$, where $\sum_{i=1}^sp_i=1$, Algorithm \ref{alg:ebpg} reduces to the randomized block coordinate descent method (RBCD) (e.g., \cite{nesterov2012efficiency, richtarik2012iteration, lu2013complexity, lu2013randomized}).  
{
Unlike these existing results, we do not assume convexity.
}

In our analysis,  we impose an essentially cyclic assumption --- each block is selected for update at least once within every $T\ge s$ consecutive iterations --- otherwise  the order is arbitrary.
{Our convergence results apply to all the above special cases except RBCD, whose convergence analysis requires different strategies; see \cite{nesterov2012efficiency, richtarik2012iteration, lu2013complexity} for the convex case and \cite{lu2013randomized} for the nonconvex case.}

\subsection{Kurdyka-{\L}ojasiewicz property}\label{sec:KL}
To establish whole sequence convergence of Algorithm \ref{alg:ebpg}, a key assumption is the Kurdyka-{\L}ojasiewicz (KL) property of the objective function $F$.

A lot of functions are known to satisfy the KL property. Recent works \cite[section 4]{attouch2010proximal} and \cite[section 2.2]{xu2013block} give many specific examples that satisfy the property, such as the $\ell_p$-(quasi)norm $\|\vx\|_p$ with $p\in [0,+\infty]$, any piecewise polynomial functions, indicator functions of polyhedral set, orthogonal matrix set, and positive semidefinite cone, matrix rank function, and so on.
\begin{definition}[Kurdyka-{\L}ojasiewicz property]\label{def:KL}
A function $\psi(\bfx)$ satisfies the KL property at point $\bar{\bfx}\in\mathrm{dom}(\partial\psi)$ if there exist $\eta>0$, a neighborhood $\cB_\rho(\bar{\bfx})\triangleq\{\vx:\|\vx-\bar{\vx}\|<\rho\}$, and a concave function 
$\phi(a)=c\cdot a^{1-\theta}$ for some $c>0$ and $\theta\in[0,1)$ such that the KL inequality holds
\begin{equation}\label{eq:KL}
\phi'(|\psi(\bfx)-\psi(\bar{\bfx})|)\mathrm{dist}(\mathbf{0},\partial \psi(\bfx))\ge 1,\text{ for any }\bfx\in \cB_\rho(\bar{\vx})\cap \mathrm{dom}(\partial \psi) \text{ and }\psi(\bar{\bfx})<\psi(\bfx)< \psi(\bar{\bfx})+\eta,
\end{equation}
where $\mathrm{dom}(\partial\psi)=\{\bfx: \partial\psi(\bfx)\neq\emptyset\}$ 
and $\mathrm{dist}(\mathbf{0},\partial \psi(\bfx))=\min\{\|\bfy\|:\bfy\in\partial \psi(\bfx)\}$.
\end{definition}

The KL property was introduced by {\L}ojasiewicz \cite{lojasiewicz1993geometrie} for real analytic functions. 
Kurdyka \cite{kurdyka1998gradients} extended it to 
functions of the $o$-minimal structure. Recently, the KL inequality \eqref{eq:KL} was  further 
extended to nonsmooth sub-analytic functions \cite{bolte2007lojasiewicz}. The work \cite{bolte2010characterizations} characterizes the geometric meaning of the KL inequality.

\subsection{Related literature}
There are many methods that solve general nonconvex problems. Methods in the papers \cite{fuduli2004minimizing, burke2005robust, chen2012smoothing, bagirov2013subgradient}, the books \cite{Bertsekas-NLP, NocedalWright06}, and in the references therein,  do not break variables into blocks. They usually have the properties of local convergence or subsequence convergence to a critical point, or global convergence in the terms of the violation of optimality conditions. Next, we review BCD methods.

BCD has been extensively used in many applications. Its original form, block coordinate minimization (BCM), which updates a block  by minimizing the original objective with respect to that block, dates back to the 1950's \cite{Hildreth-57} and is closely related to the Gauss-Seidel and SOR methods for  linear equation systems. Its convergence was studied under a variety of settings (cf. \cite{GrippoSciandrone1999, Tseng-01, razaviyayn2013unified} and the references therein). The convergence rate of BCM was  established under  the strong convexity assumption \cite{LuoTseng1992} for the multi-block case and  under the general convexity assumption \cite{beck2013convergence} for the two-block case.
To have even cheaper updates, one can update a block  approximately, for example, by minimizing an approximate objective like was done in \eqref{eq:ebpg}, instead of sticking to the original objective. The work \cite{TsengYun2009} 
is a block coordinate gradient descent (BCGD) method 
where taking a block gradient step is equivalent to minimizing a certain prox-linear approximation of the objective.
Its whole sequence convergence and local convergence rate were established under the assumptions of a so-called \emph{local Lipschitzian error bound} and the convexity of the objective's nondifferentiable part. 
The randomized block coordinate descent (RBCD) method in \cite{nesterov2012efficiency, lu2013randomized}  randomly chooses the block to update 
 at each iteration and is not essentially cyclic. 
Objective convergence was established \cite{nesterov2012efficiency, richtarik2012iteration}, and the violation of the first-order optimization condition was shown to converge to \emph{zero} \cite{lu2013randomized}. There is no iterate  convergence result for RBCD. 

Some special cases of Algorithm \ref{alg:ebpg} have been analyzed in the literature. The work \cite{xu2013block} uses cyclic updates of a fixed order and assumes block-wise convexity; 
\cite{bolte2013proximal} studies two blocks without extrapolation, namely, $s=2$ and $\hat{\vx}_{i}^k=\vx_{i}^{k-1},\,\forall k$ in \eqref{eq:ebpg}. A more general result is \cite[Lemma 2.6]{attouch2013convergence}, where three conditions for whole sequence convergence are given and are met by methods including averaged projection, proximal point, and forward-backward splitting. Algorithm \ref{alg:ebpg}, however, does not satisfy the three conditions in \cite{attouch2013convergence}. 

{The extrapolation technique in \eqref{eq:extrp0} has been applied to accelerate the (block) prox-linear method for solving convex optimization problems (e.g., \cite{NesterovConvexBook2004, BeckTeboulle2009, richtarik2012iteration, lu2013complexity}). Recently, \cite{xu2013block, G-L-2015apg-noncvx} show that the (block) prox-linear iteration with extrapolation can still converge if the nonsmooth part of the problem is convex, while the smooth part can be nonconvex. Because of the convexity assumption, their convergence results do not apply to Algorithm \ref{alg:ebpg} for solving the general nonconvex problem \eqref{eq:main}.}



\subsection{Contributions} We summarize the main contributions of this paper as follows.
\begin{itemize}
\item We propose a block prox-linear (BPL) method  for  nonconvex smooth and nonsmooth optimization. 
Extrapolation is used to accelerate it. {To our best knowledge, this is the first work of  prox-linear acceleration for fully nonconvex problems (where both smooth and nonsmooth terms are nonconvex) with a convergence guarantee. However, we have not proved any improved convergence rate.} 
\item Assuming essentially cyclic updates of the blocks, we obtain the whole sequence convergence of BPL to a critical point with rate estimates, by first establishing subsequence convergence and then applying  the Kurdyka-{\L}ojasiewicz (KL) property. Furthermore, we tailor our convergence analysis to several existing algorithms, including non-convex regularized linear regression and  nonnegative matrix factorization, to improve their existing convergence results.
\item We numerically tested  BPL  on nonnegative matrix and tensor factorization problems. At each cycle of updates, the blocks were randomly shuffled. We observed that BPL was very efficient and that random shuffling avoided local solutions more effectively than the deterministic cyclic order.
\end{itemize}

\subsection{Notation and preliminaries}
We restrict our discussion in $\RR^n$ equipped with the Euclidean norm, denoted by $\|\cdot\|$. However, all our results extend to general of primal and dual norm pairs. The lower-case letter $s$ is reserved for the number of blocks and $\ell, L, L_k,\ldots$ for various Lipschitz constants. $\vx_{<i}$ is short for $(\vx_1,\ldots,\vx_{i-1})$, $\vx_{>i}$ for $(\vx_{i+1},\ldots,\vx_s)$, and $\vx_{\neq i}$ for $(\vx_{<i},\vx_{>i})$. We simplify $f(\vx_{<i},\hat{\vx}_i,\vx_{>i})$ to $f(\vx_{\neq i},\hat{\vx}_i)$. The distance of a point $\vx$ to a set $\cY$ is denoted by $\text{dist}(\vx,\cY)=\inf_{\vy\in\cY}\|\vx-\vy\|.$

Since the update may be aperiodic, extra notation is used for when and how many times a block is updated. Let $\cK[i,k]$ denote the  set of iterations in which the $i$-th block has been selected to update till the $k$th iteration:
\begin{equation}
\label{i-th-iter}\cK[i,k]\triangleq\{\kappa: b_\kappa=i,\, 1\le\kappa\le k\}\subseteq\{1,\ldots,k\},
\end{equation}
and let $$d_i^k\triangleq\big|\cK[i,k]\big|,$$ which is the number of times the $i$-th block has been updated till iteration $k$. For $k=1,\ldots,$ we have  $\cup_{i=1}^s\cK[i,k]=[k]\triangleq\{1,2,\ldots,k\}$ and $\sum_{i=1}^sd_i^k=k$.

Let $\vx^k$ be the value of $\vx$ after the $k$th iteration, and for each block $i$, $\tilde{\bfx}_i^j$ be the value of $\vx_i$ after its $j$th update. By letting $j=d_i^k$, we have $\bfx_i^k = \tilde{\bfx}_i^j$.

The extrapolated point in \eqref{eq:ebpg} (for $i=b_k$) is computed from the last two  updates of the same block:
\begin{equation}\label{eq:extrap}\hat{\bfx}_{i}^{k}=\tilde{\bfx}_{i}^{j-1}+
\omega_k(\tilde{\bfx}_{i}^{j-1}-\tilde{\bfx}_{i}^{j-2}),\text{ where }j=d_i^k,
\end{equation} for some weight $0\le\omega_k\le 1$. We partition the set of Lipschitz constants and  the extrapolation weights into $s$ disjoint subsets as
\begin{subequations}
\begin{align}
\{L_\kappa:1\le \kappa\le k\}=\cup_{i=1}^s\{L_\kappa:\kappa\in\cK[i,k]\}\triangleq\cup_{i=1}^s\{\tilde{L}_i^j: 1\le j\le d_i^k\},\label{setL}\\
\{\omega_\kappa:1\le \kappa\le k\}=\cup_{i=1}^s\{\omega_\kappa:\kappa\in\cK[i,k]\}\triangleq\cup_{i=1}^s\{\tilde{\omega}_i^j: 1\le j\le d_i^k\}.\label{setw}
\end{align}
\end{subequations}
Hence, for each block $i$, we have three sequences: 
\begin{subequations}\label{seq}
\begin{align}
&\text{value of }\vx_i:\ \tilde{\vx}_i^1,\tilde{\vx}_i^2,\ldots,\tilde{\vx}_i^{d_i^k},\ldots;\label{seq-xi}\\
&\text{Lipschitz constant}:\ \tilde{L}_i^1, \tilde{L}_i^2,\ldots, \tilde{L}_i^{d_i^k},\ldots;\label{seq-Li}\\
&\text{extrapolation weight}:\ \tilde{\omega}_i^1,\tilde{\omega}_i^2,\ldots,\tilde{\omega}_i^{d_i^k},\ldots.\label{seq-wi}
\end{align}
\end{subequations}
{For simplicity, we take stepsizes and extrapolation weights as follows
\begin{equation}\label{eq:alpha-omega}\alpha_k=\frac{1}{2L_k},\,\forall k,\qquad \tilde{\omega}_i^j\le\frac{\delta}{6}\sqrt{\tilde{L}_i^{j-1}/\tilde{L}_i^{j}},\,\forall i,j,\text{ for some }\delta<1.
\end{equation}
However, if the problem \eqref{eq:main} has more structures such as block convexity, we can use larger $\alpha_k$ and $\omega_k$; see Remark \ref{rm:multiconv}.} Table \ref{table:notation} summarizes the notation.
In addition, we initialize $\tilde{\vx}_i^{-1}=\tilde{\vx}_i^0=\vx_i^0,~\forall i$. \begin{table}\caption{Summary of notation}\label{table:notation}
\centering
{\small
\begin{tabular}{c|c}
\hline
Notion & Definition\\\hline\hline
$s$ & the total number of blocks\\
$b_k$ & the update block selected at the $k$-th iteration\\
$\cK[i,k]$ & the set of iterations up to $k$ in which  $\vx_i$  is updated; see \eqref{i-th-iter}\\
$d_i^k$ & $\big|\cK[i,k]\big|$:  the number of updates to  $\vx_i$ within the first $k$ iterations\\
$\vx^k$ & the value of $\vx$ after the $k$-th iteration\\
$\tilde{\vx}_i^j$ & the value of $\vx_i$ after its $j$-th update; see \eqref{seq-xi}\\
$L_k$ & the gradient Lipschitz constant of the update block  at the $k$-th iteration; see \eqref{eq:condL}\\
$\tilde{L}_i^j$ & the gradient Lipschitz constant of block $i$ at its $j$-th update; see \eqref{setL} and \eqref{seq-Li}\\
$\omega_k$ &the  extrapolation weight used at the $k$-th iteration\\
$\tilde{\omega}_i^j$ & the extrapolation weight used at the $j$-th update of   $\vx_i$; see \eqref{setw} and \eqref{seq-wi}
\\\hline
\end{tabular}}
\end{table}

We make the following definitions, which can be found in \cite{rockafellar2009variational}.

\begin{definition}[Limiting Fr{\'e}chet subdifferential\cite{kruger2003frechet}]
A vector $\vg$ is a Fr{\'e}chet subgradient of a lower semicontinuous function $F$ at $\vx\in\dom(F)$ if
$$\liminf_{\vy\to\vx,\vy\neq\vx}\frac{F(\vy)-F(\vx)-\langle \vg,\vy-\vx\rangle}{\|\vy-\vx\|}\ge0.$$
The set of Fr{\'e}chet subgradient of $F$ at $\vx$ is called Fr{\'e}chet subdifferential and denoted as $\hat{\partial} F(\vx)$. If $\vx\not\in\dom(F)$, then $\hat{\partial} F(\vx)=\emptyset$.

The limiting Fr{\'e}chet subdifferential is denoted by ${\partial} F(\vx)$ and defined as
$${\partial} F(\vx)=\{\vg: \text{ there is }\vx_m\to\vx \text{ and } \vg_m\in\hat{\partial} F(\vx_m)\text{ such that }\vg_m\to\vg\}.$$
\end{definition}If $F$ is differentiable\footnote{A function $F$ on $\RR^n$ is differentiable at point $\vx$ if there exists a vector $\vg$ such that $\lim_{\vh\to 0}\frac{|F(\vx+\vh)-F(\vx)-\vg^\top\vh|}{\|\vh\|}=0$} at $\vx$, then $\partial F(\vx)=\hat{\partial} F(\vx)=\{\nabla F(\vx)\}$; see \cite[Proposition 1.1]{kruger2003frechet} for example, and if $F$ is convex, then
$\partial F(\vx)=\{\vg: F(\vy)\ge F(\vx)+\langle \vg, \vy-\vx\rangle,\,\forall \vy\in\mbox{dom}(F)\}.$ We use the limiting subdifferential for general nonconvex nonsmooth functions.
For problem \eqref{eq:main},  it holds that (see \cite[Lemma 2.1]{attouch2010proximal} or \cite[Prop. 10.6,  pp. 426]{rockafellar2009variational})
\begin{equation}\label{eq:cart-prod}
\partial F(\vx)=\{\nabla_{\vx_1}f(\vx)+\partial r_1(\vx_1)\}\times\cdots\times\{\nabla_{\vx_s}f(\vx)+\partial r_s(\vx_s)\},
\end{equation}
where $\cX_1\times\cX_2$ denotes the Cartesian product of $\cX_1$ and $\cX_2$ .
\begin{definition}[Critical point]
A point $\vx^*$ is called a critical point of $F$ if $\vzero\in\partial F(\vx^*)$.
\end{definition}

\begin{definition}[Proximal mapping]
For a proper, lower semicontinuous function $r$, its proximal mapping $\prox_r(\cdot)$ is defined as
$$\prox_r(\vx)=\argmin_\vy \frac{1}{2}\|\vy-\vx\|^2+r(\vy).$$
\end{definition}As $r$ is nonconvex, $\prox_r(\cdot)$ is generally set-valued. Using this notation, the update in \eqref{eq:ebpg} can be written as (assume $i=b_k$)
$$\vx_i^k\in\prox_{\alpha_k r_i}\left(\hat{\bfx}_i^k-\alpha_k\nabla_{\vx_i} f(\bfx_{\neq i}^{k-1},\hat{\bfx}_i^k)\right)$$


\subsection{Organization}
The rest of the paper is organized as follows. Section \ref{sec:analysis} establishes convergence results. Examples and applications are given in section \ref{sec:example}, and finally section \ref{sec:conclusion} concludes this paper.

\section{Convergence analysis}\label{sec:analysis}
In this section, we analyze the convergence of Algorithm \ref{alg:ebpg}. Throughout our analysis, we make the following assumptions. 

\begin{assumption}\label{assump1}
$F$ is proper and lower bounded in $\dom(F)\triangleq\{\vx:F(\vx)<+\infty\}$, $f$ is continuously differentiable, and $r_i$ is proper lower semicontinuous for all $i$. 
Problem \eqref{eq:main} has a critical point ${\bfx}^*$, i.e., $\mathbf{0}\in\partial F({\bfx}^*)$.
\end{assumption}
\begin{assumption}\label{assump2}
Let $i=b_k$. $\nabla_{\vx_{i}} f(\bfx_{\neq i}^{k-1},\bfx_{i})$ has Lipschitz continuity constant $L_k$ with respect to $\bfx_{i}$, i.e.,
\begin{equation}\label{eq:condL}
\|\nabla_{\vx_{i}} f(\bfx_{\neq i}^{k-1},\bfu)-\nabla_{\vx_{i}} f(\bfx_{\neq i}^{k-1},\bfv)\|\le
L_{k}\|\bfu-\bfv\|, \ \forall \bfu,\bfv,
\end{equation}
and there exist constants $0<\ell\le L<\infty,$ such that $\ell\le L_k\le L$ for all $k$.
\end{assumption}

\begin{assumption}[Essentially cyclic block update]\label{assump3}
In Algorithm \ref{alg:ebpg}, within any $T$ consecutive iterations, every block is updated at least one time.
\end{assumption}

Our analysis proceeds with several steps. We first estimate the objective decrease after every iteration (see Lemma \ref{lem:dec0}) and then establish a square summable result of the iterate differences (see Proposition \ref{prop:sqsum}). Through the square summable result, we show a subsequence convergence result that every limit point of the iterates is a critical point (see Theorem \ref{thm:subseq}). Assuming the KL property (see Definition \ref{def:KL}) on the objective function and the following monotonicity condition, we establish whole sequence convergence of our algorithm and also give estimate of convergence rate (see Theorems \ref{thm:global-ebpg} and \ref{thm:rate}).
\begin{condition}[Nonincreasing objective]\label{cond-dec} The weight
$\omega_k$ is chosen so that $F(\vx^k)\le F(\vx^{k-1}),\,\forall k$.
\end{condition}

We will show that a range of nontrivial $\omega_k>0$ always exists to satisfy Condition \ref{cond-dec} under a mild assumption, and thus one can  backtrack  $\omega_k$ to ensure $F(\vx^k)\le F(\vx^{k-1}),\,\forall k$. Maintaining the monotonicity of $F(\vx^k)$ can  significantly improve the numerical performance of the algorithm, as shown in  our numerical results below and also in \cite{o2013adaptive, xu2015hsvm}. Note that  subsequence convergence  does not require this condition.

We begin our analysis with the following lemma. The proofs of all the lemmas and propositions are given in Appendix \ref{app:proof-lem}.

{
\begin{lemma}\label{lem:dec0}
Take $\alpha_k$ and $\omega_k$ as in \eqref{eq:alpha-omega}.
After each iteration $k$, it holds
\begin{align}
F(\bfx^{k-1})-F(\bfx^k)\ge & c_1\tilde{L}_i^{j}\|\tilde{\bfx}_i^{j-1}-\tilde{\bfx}_i^{j}\|^2-c_2\tilde{L}_i^{j}(\tilde{\omega}_i^{j})^2\|\tilde{\bfx}_i^{j-2}-\tilde{\bfx}_i^{j-1}\|^2\label{eq:dec2-0}\\
\ge & c_1\tilde{L}_i^{j}\|\tilde{\bfx}_i^{j-1}-\tilde{\bfx}_i^{j}\|^2-\frac{c_2\tilde{L}_i^{j-1}}{36}\delta^2\|\tilde{\bfx}_i^{j-2}-\tilde{\bfx}_i^{j-1}\|^2,\label{eq:dec2-1}
\end{align}
where $c_1=\frac{1}{4}, c_2=9$, $i=b_k$ and $j=d_i^k$.
\end{lemma}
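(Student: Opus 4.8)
The plan is to derive the one-step descent inequality directly from the prox-linear update rule \eqref{eq:ebpg}, combining the optimality of the subproblem minimizer with the Lipschitz descent lemma applied to the smooth part $f$. Fix iteration $k$, let $i=b_k$ and $j=d_i^k$, so that before the update the current block value is $\tilde{\vx}_i^{j-1}$, after the update it is $\tilde{\vx}_i^j$, the extrapolated point is $\hat{\vx}_i^k=\tilde{\vx}_i^{j-1}+\tilde{\omega}_i^j(\tilde{\vx}_i^{j-1}-\tilde{\vx}_i^{j-2})$, and $\alpha_k=\tfrac{1}{2L_k}=\tfrac{1}{2\tilde{L}_i^j}$. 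Since only block $i$ changes, $F(\vx^{k-1})-F(\vx^k)$ reduces to a difference involving only $f$ restricted to block $i$ plus $r_i(\tilde{\vx}_i^{j-1})-r_i(\tilde{\vx}_i^j)$.

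First I would invoke the standard descent lemma for the (gradient-Lipschitz, constant $\tilde L_i^j$) function $g(\cdot):=f(\vx_{\neq i}^{k-1},\cdot)$ between the points $\tilde{\vx}_i^{j-1}$ and $\tilde{\vx}_i^j$:
\begin{equation*}
g(\tilde{\vx}_i^j)\le g(\tilde{\vx}_i^{j-1})+\langle\nabla g(\tilde{\vx}_i^{j-1}),\tilde{\vx}_i^j-\tilde{\vx}_i^{j-1}\rangle+\frac{\tilde L_i^j}{2}\|\tilde{\vx}_i^j-\tilde{\vx}_i^{j-1}\|^2.
\end{equation*}
Second, I would use the fact that $\tilde{\vx}_i^j$ is a minimizer of the strongly convex surrogate in \eqref{eq:ebpg} (strongly convex in its quadratic-plus-linear part, modulo $r_i$), which by comparing its value at $\tilde{\vx}_i^j$ against its value at $\tilde{\vx}_i^{j-1}$ yields
\begin{equation*}
\langle\nabla g(\hat{\vx}_i^k),\tilde{\vx}_i^j-\hat{\vx}_i^k\rangle+\frac{1}{2\alpha_k}\|\tilde{\vx}_i^j-\hat{\vx}_i^k\|^2+r_i(\tilde{\vx}_i^j)\le \langle\nabla g(\hat{\vx}_i^k),\tilde{\vx}_i^{j-1}-\hat{\vx}_i^k\rangle+\frac{1}{2\alpha_k}\|\tilde{\vx}_i^{j-1}-\hat{\vx}_i^k\|^2+r_i(\tilde{\vx}_i^{j-1}).
\end{equation*}
Adding these, bounding the cross term $\langle\nabla g(\tilde{\vx}_i^{j-1})-\nabla g(\hat{\vx}_i^k),\tilde{\vx}_i^j-\tilde{\vx}_i^{j-1}\rangle$ that appears via Cauchy--Schwarz and the Lipschitz bound $\|\nabla g(\tilde{\vx}_i^{j-1})-\nabla g(\hat{\vx}_i^k)\|\le\tilde L_i^j\|\tilde{\vx}_i^{j-1}-\hat{\vx}_i^k\|=\tilde L_i^j\tilde{\omega}_i^j\|\tilde{\vx}_i^{j-1}-\tilde{\vx}_i^{j-2}\|$, and expanding the quadratic terms in $\hat{\vx}_i^k$ around $\tilde{\vx}_i^{j-1}$ (again substituting $\tilde{\vx}_i^{j-1}-\hat{\vx}_i^k=-\tilde{\omega}_i^j(\tilde{\vx}_i^{j-1}-\tilde{\vx}_i^{j-2})$), one collects a positive multiple of $\|\tilde{\vx}_i^{j-1}-\tilde{\vx}_i^j\|^2$ on the favorable side and a multiple of $(\tilde{\omega}_i^j)^2\|\tilde{\vx}_i^{j-2}-\tilde{\vx}_i^{j-1}\|^2$ on the unfavorable side, using Young's inequality $ab\le\tfrac{\epsilon}{2}a^2+\tfrac{1}{2\epsilon}b^2$ with a suitably chosen $\epsilon$ to absorb the cross term into the $\|\tilde{\vx}_i^{j-1}-\tilde{\vx}_i^j\|^2$ part. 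With $\alpha_k=\tfrac{1}{2\tilde L_i^j}$, the coefficient of $\|\tilde{\vx}_i^{j-1}-\tilde{\vx}_i^j\|^2$ is $\tfrac{1}{2\alpha_k}-\tfrac{\tilde L_i^j}{2}=\tfrac{\tilde L_i^j}{2}$ before absorbing the cross term, and a clean choice of the Young parameter leaves $c_1=\tfrac14$; tracking the remaining contributions gives $c_2=9$. Then \eqref{eq:dec2-1} follows from \eqref{eq:dec2-0} by substituting the stepsize/weight rule \eqref{eq:alpha-omega}, namely $(\tilde{\omega}_i^j)^2\le\tfrac{\delta^2}{36}\,\tilde L_i^{j-1}/\tilde L_i^j$, so that $c_2\tilde L_i^j(\tilde{\omega}_i^j)^2\le \tfrac{c_2}{36}\delta^2\tilde L_i^{j-1}$.

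The main obstacle I anticipate is purely bookkeeping: choosing the Young's-inequality split parameter so that the cross term is fully absorbed while leaving exactly the stated constants $c_1=\tfrac14$ and $c_2=9$, rather than merely \emph{some} positive constants. There is a one-parameter family of valid estimates, and hitting these particular values requires carefully balancing the contribution of $\tfrac{1}{2\alpha_k}\|\tilde{\vx}_i^j-\hat{\vx}_i^k\|^2$ (which, after expanding $\hat{\vx}_i^k$, contributes both a $\|\tilde{\vx}_i^j-\tilde{\vx}_i^{j-1}\|^2$ piece and mixed pieces) against the $-\tfrac{1}{2\alpha_k}\|\tilde{\vx}_i^{j-1}-\hat{\vx}_i^k\|^2$ term and the descent-lemma quadratic. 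A secondary point to be careful about is that $\prox_{\alpha_k r_i}$ is set-valued when $r_i$ is nonconvex, so the "minimizer comparison" step must be phrased as: $\tilde{\vx}_i^j$ achieves the minimum, hence its objective value is $\le$ that at $\tilde{\vx}_i^{j-1}$ — which is valid regardless of convexity of $r_i$ — rather than via any first-order stationarity argument. Everything else is routine manipulation of inner products and norms.
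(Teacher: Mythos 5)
Your proposal follows essentially the same route as the paper's proof: descent lemma on $f(\vx_{\neq i}^{k-1},\cdot)$ between $\tilde{\vx}_i^{j-1}$ and $\tilde{\vx}_i^j$, the value comparison at the subproblem minimizer (valid for nonconvex $r_i$, as you note), Cauchy--Schwarz plus the Lipschitz bound on the gradient gap at the extrapolated point, and Young's inequality; with $\alpha_k=\tfrac{1}{2L_k}$ the paper's computation indeed yields $c_1=\tfrac{\gamma-1}{4}=\tfrac14$ and $c_2=\tfrac{(\gamma+1)^2}{\gamma-1}=9$ at $\gamma=2$, and the passage to \eqref{eq:dec2-1} is the same substitution of the weight rule. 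The outline is correct and the anticipated "bookkeeping" is exactly what the paper's Appendix A.1 carries out.
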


\begin{remark}\label{rm:large-alpha}
We can relax the choices of $\alpha_k$ and $\omega_k$ in \eqref{eq:alpha-omega}. For example, we can take $\alpha_k=\frac{1}{\gamma L_k},\,\forall k$, and $\tilde{\omega}_i^j\le\frac{\delta(\gamma-1)}{2(\gamma+1)}\sqrt{\tilde{L}_i^{j-1}/\tilde{L}_i^{j}},\,\forall i,j$ for any $\gamma>1$ and some $\delta<1$. Then, \eqref{eq:dec2-0} and \eqref{eq:dec2-1} hold with $c_1=\frac{\gamma-1}{4}, c_2=\frac{(\gamma+1)^2}{\gamma-1}$. In addition, if $0<\inf_k\alpha_k\le\sup_k\alpha_k<\infty$ (not necessary $\alpha_k=\frac{1}{\gamma L_k}$), \eqref{eq:dec2-0} holds with positive $c_1$ and $c_2$, and the extrapolation weights satisfy $\tilde{\omega}_i^j\le\delta\sqrt{(c_1\tilde{L}_i^{j-1})/(c_2\tilde{L}_i^j)},\forall i,j$ for some $\delta<1$, then all our convergence results below remain valid. 

Note that $d_i^{k}=d_i^{k-1}+1$ for $i=b_k$ and $d_i^{k}=d_i^{k-1}, \forall i\neq b_k$. Adopting the convention that $\sum_{j=p}^q a_j=0$ when $q<p$, we can write \eqref{eq:dec2-1} into
\begin{equation}\label{eq:diff}
F(\bfx^{k-1})-F(\bfx^k)\ge\sum_{i=1}^s\sum_{j=d_i^{k-1}+1}^{d_i^k}\frac{1}{4}\left(\tilde{L}_i^j\|\tilde{\bfx}_i^{j-1}-\tilde{\bfx}_i^j\|^2-\tilde{L}_i^{j-1}\delta^2\|\tilde{\bfx}_i^{j-2}-\tilde{\bfx}_i^{j-1}\|^2\right),
\end{equation}
which will be used in our subsequent convergence analysis.
\end{remark}
}


\begin{remark}\label{rm:multiconv}
If $f$ is block multi-convex, i.e., it is convex with respect to each block of variables while keeping the remaining variables fixed, and $r_i$ is convex for all $i$, then taking $\alpha_k=\frac{1}{L_k}$, we have \eqref{eq:dec2-0} holds with $c_1=\frac{1}{2}$ and $c_2=\frac{1}{2}$; see the proof in Appendix \ref{app:proof-lem}.
In this case, we can take $\tilde{\omega}_i^j\le \delta\sqrt{\tilde{L}_i^{j-1}/\tilde{L}_i^j},\,\forall i,j$ for some $\delta<1$, and all our convergence results can be shown through the same arguments.
\end{remark}


\subsection{Subsequence convergence}
Using Lemma \ref{lem:dec0}, we can have the following result, through which we show subsequence convergence of Algorithm \ref{alg:ebpg}.
\begin{proposition}[Square summable]\label{prop:sqsum}
Let $\{\bfx^k\}_{k\ge1}$ be generated from Algorithm \ref{alg:ebpg} with $\alpha_k$ and $\omega_k$ taken from \eqref{eq:alpha-omega}. 
We have
\begin{equation}\label{eq:sqsum}
\sum_{k=1}^\infty\|{\vx}^{k-1}-{\vx}^k\|^2<\infty.
\end{equation}
\end{proposition}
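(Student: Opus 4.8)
The plan is to sum the per-iteration descent inequality \eqref{eq:diff} from $k=1$ to $N$ and let $N\to\infty$, exploiting the telescoping-like cancellation between the positive term at update $j$ and the negative term at update $j+1$ of the same block, together with the uniform Lipschitz bounds $\ell\le \tilde L_i^j\le L$ and the fact that $\delta<1$.

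First I would fix $N$ and add \eqref{eq:diff} over $k=1,\dots,N$. On the left this telescopes to $F(\vx^0)-F(\vx^N)$, which is bounded above by $F(\vx^0)-\inf F<\infty$ by Assumption \ref{assump1}. On the right, because $\sum_{k=1}^N\sum_{i=1}^s\sum_{j=d_i^{k-1}+1}^{d_i^k}(\cdot) = \sum_{i=1}^s\sum_{j=1}^{d_i^N}(\cdot)$ (each update of each block is counted exactly once), I get
\begin{equation*}
F(\vx^0)-F(\vx^N)\ \ge\ \frac14\sum_{i=1}^s\sum_{j=1}^{d_i^N}\Big(\tilde L_i^j\|\tilde\vx_i^{j-1}-\tilde\vx_i^j\|^2-\tilde L_i^{j-1}\delta^2\|\tilde\vx_i^{j-2}-\tilde\vx_i^{j-1}\|^2\Big).
\end{equation*}
Next I would reindex the negative part: shifting $j\mapsto j+1$ in the second term, for each block $i$ the inner sum equals
\begin{equation*}
\sum_{j=1}^{d_i^N}\tilde L_i^j\|\tilde\vx_i^{j-1}-\tilde\vx_i^j\|^2-\delta^2\sum_{j=0}^{d_i^N-1}\tilde L_i^{j}\|\tilde\vx_i^{j-1}-\tilde\vx_i^{j}\|^2\ \ge\ (1-\delta^2)\sum_{j=1}^{d_i^N}\tilde L_i^j\|\tilde\vx_i^{j-1}-\tilde\vx_i^j\|^2,
\end{equation*}
where the $j=0$ term vanishes since $\tilde\vx_i^{-1}=\tilde\vx_i^0$. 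Using $\tilde L_i^j\ge\ell$ then gives
\begin{equation*}
\frac{(1-\delta^2)\ell}{4}\sum_{i=1}^s\sum_{j=1}^{d_i^N}\|\tilde\vx_i^{j-1}-\tilde\vx_i^j\|^2\ \le\ F(\vx^0)-\inf F.
\end{equation*}

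Finally I would translate the doubly-indexed sum back into the iteration-indexed sum: at iteration $k$ only block $i=b_k$ changes and $\|\vx^{k-1}-\vx^k\|^2=\|\tilde\vx_i^{j-1}-\tilde\vx_i^j\|^2$ with $j=d_i^k$, so $\sum_{k=1}^N\|\vx^{k-1}-\vx^k\|^2=\sum_{i=1}^s\sum_{j=1}^{d_i^N}\|\tilde\vx_i^{j-1}-\tilde\vx_i^j\|^2$. Hence $\sum_{k=1}^N\|\vx^{k-1}-\vx^k\|^2\le \frac{4(F(\vx^0)-\inf F)}{(1-\delta^2)\ell}$ uniformly in $N$, and letting $N\to\infty$ yields \eqref{eq:sqsum}. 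The only slightly delicate point is the bookkeeping of the re-indexing and confirming that the boundary terms ($j=0$, and the $j-2$ index for $j=1$) are harmless, which is exactly what the convention $\sum_{j=p}^q=0$ for $q<p$ and the initialization $\tilde\vx_i^{-1}=\tilde\vx_i^0=\vx_i^0$ take care of; there is no real analytic obstacle once \eqref{eq:diff} is in hand.
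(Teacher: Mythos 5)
Your proposal is correct and follows essentially the same route as the paper's own proof: sum \eqref{eq:diff} over the iterations, collapse the triple sum into $\sum_{i=1}^s\sum_{j=1}^{d_i^N}$, shift the index on the negative terms (using $\tilde\vx_i^{-1}=\tilde\vx_i^0$ to kill the boundary term) to extract the factor $(1-\delta^2)$, and bound below by $\ell$ before invoking the lower boundedness of $F$. The only cosmetic difference is that you translate the block-indexed sum back to the iteration-indexed sum directly, whereas the paper concludes $\sum_{i,j}\|\tilde\vx_i^{j-1}-\tilde\vx_i^j\|^2<\infty$ and notes this implies \eqref{eq:sqsum}; these are equivalent.
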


\begin{theorem}[Subsequence convergence]\label{thm:subseq}
Under Assumptions \ref{assump1} through \ref{assump3}, let $\{\bfx^k\}_{k\ge1}$ be generated from Algorithm \ref{alg:ebpg} with $\alpha_k$ and $\omega_k$ taken from \eqref{eq:alpha-omega}. Then any limit point $\bar{\vx}$ of $\{\bfx^k\}_{k\ge1}$ is a critical point of \eqref{eq:main}. If the subsequence $\{\vx^k\}_{k\in\bar{\cK}}$ converges to $\bar{\vx}$, then
\begin{equation}\label{fun-lim}
\underset{\bar{\cK}\ni k\to\infty}\lim F(\vx^k)=F(\bar{\vx})
\end{equation}
\end{theorem}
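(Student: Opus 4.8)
The plan is to prove the two assertions of Theorem~\ref{thm:subseq} in sequence, relying on Proposition~\ref{prop:sqsum} and Lemma~\ref{lem:dec0} as the main inputs. First I would record the immediate consequence of square-summability: $\|\vx^{k-1}-\vx^k\|\to 0$, and hence, by Assumption~\ref{assump3} (essential cyclicity with window $T$), for each block $i$ also $\|\vx_i^k-\vx_i^{k-T}\|\to 0$ and $\|\hat\vx_i^k-\vx_i^{k-1}\|\to 0$ (the extrapolation term is controlled by $\omega_k\le 1$ times a vanishing difference of two consecutive updates of the same block). Now let $\bar\vx$ be a limit point, say $\vx^k\to\bar\vx$ along $k\in\bar\cK$. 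Because there are only finitely many block indices, by passing to a further subsequence I may assume $b_k$ is constant, equal to some $i_0$, along $\bar\cK$; and since every block is touched within any $T$ steps, for \emph{each} block $i$ I can find, for each $k\in\bar\cK$, an index $\kappa_i(k)$ with $|\kappa_i(k)-k|<T$ and $b_{\kappa_i(k)}=i$, and along this shifted subsequence $\vx^{\kappa_i(k)}\to\bar\vx$ as well (using $\|\vx^{k-1}-\vx^k\|\to 0$).

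Next I would pass to the limit in the optimality condition of the prox-linear subproblem \eqref{eq:ebpg}. For the iteration $\kappa=\kappa_i(k)$ updating block $i$, first-order optimality gives
\begin{equation*}
\vzero\in\nabla_{\vx_i}f(\vx_{\neq i}^{\kappa-1},\hat\vx_i^{\kappa})+\tfrac{1}{\alpha_\kappa}(\vx_i^{\kappa}-\hat\vx_i^{\kappa})+\partial r_i(\vx_i^{\kappa}),
\end{equation*}
i.e.\ $\vg_i^{\kappa}\triangleq \nabla_{\vx_i}f(\vx^{\kappa})-\nabla_{\vx_i}f(\vx_{\neq i}^{\kappa-1},\hat\vx_i^{\kappa})-\tfrac{1}{\alpha_\kappa}(\vx_i^{\kappa}-\hat\vx_i^{\kappa})\in \nabla_{\vx_i}f(\vx^{\kappa})+\partial r_i(\vx_i^{\kappa})$. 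I would show $\vg_i^{\kappa}\to\vzero$ along $\bar\cK$: the stepsize $\tfrac{1}{\alpha_\kappa}=2L_\kappa\le 2L$ is bounded, $\|\vx_i^{\kappa}-\hat\vx_i^{\kappa}\|\to 0$, and the two gradient terms differ by a Lipschitz-controlled amount $L_\kappa\|\vx_i^{\kappa}-\hat\vx_i^{\kappa}\|\to 0$ (Assumption~\ref{assump2}) plus the effect of replacing $\vx_{\neq i}^{\kappa-1}$ by $\vx_{\neq i}^{\kappa}$, which vanishes since consecutive iterates converge and $\nabla f$ is continuous. Meanwhile $\vx_i^{\kappa}\to\bar\vx_i$ and, crucially, by lower semicontinuity of $r_i$ and the fact that the subproblem value is no larger than its value at the candidate $\bar\vx_i$, one gets $r_i(\vx_i^{\kappa})\to r_i(\bar\vx_i)$ along the subsequence (this is the standard ``$r_i(\vx_i^{\kappa})\le r_i(\bar\vx_i)+o(1)$ from optimality, $\ge r_i(\bar\vx_i)-o(1)$ from l.s.c.'' argument). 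Then the closedness of the limiting subdifferential graph yields $-\nabla_{\vx_i}f(\bar\vx)\in\partial r_i(\bar\vx_i)$ for every $i$, and by the Cartesian product formula \eqref{eq:cart-prod} this is exactly $\vzero\in\partial F(\bar\vx)$.

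Finally, for \eqref{fun-lim}: from the $r_i$-continuity along the subsequence just established, together with continuity of $f$, $F(\vx^k)\to F(\bar\vx)$ along $\bar\cK$. (If Condition~\ref{cond-dec} were assumed one could even conclude the full sequence $F(\vx^k)$ converges to $F(\bar\vx)$, but the theorem only claims the subsequential statement, so the subsequence argument suffices.) I expect the main obstacle to be the second half of step two: carefully justifying $r_i(\vx_i^{\kappa})\to r_i(\bar\vx_i)$ without assuming continuity of $r_i$, i.e.\ extracting the upper bound from optimality of the prox-linear step while all the linear and quadratic terms in the surrogate are shown to be asymptotically negligible, and then invoking lower semicontinuity for the matching lower bound. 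A secondary bookkeeping nuisance is handling the essentially-cyclic (aperiodic) index shifts uniformly over all $s$ blocks at once — one must keep the shifted subsequences $\{\kappa_i(k)\}$ consistent so that $\vx^{\kappa_i(k)}\to\bar\vx$ for every $i$ simultaneously, which follows from $\|\vx^{k-1}-\vx^k\|\to 0$ but should be stated explicitly.
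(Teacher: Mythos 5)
Your proposal is correct; the only substantive difference from the paper's proof is where you pass to the limit. The paper takes limits in the \emph{argmin characterization} of the subproblem \eqref{eq:up-tau}: it shows $\liminf$ of the surrogate value at $\vx_i^k$ dominates $r_i(\bar\vx_i)$ (by l.s.c.) and is bounded above by the limiting surrogate at any $\vx_i$, concludes that $\bar\vx_i$ minimizes $\langle\nabla_{\vx_i}f(\bar\vx),\vx_i-\bar\vx_i\rangle+\bar L_i\|\vx_i-\bar\vx_i\|^2+r_i(\vx_i)$, and only then writes the first-order condition \eqref{eq:blockstat}; the function-value convergence \eqref{fun-lim} is proved afterwards by the same $\limsup$/$\liminf$ sandwich you describe. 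You instead pass to the limit directly in the stationarity condition $\vzero\in\nabla_{\vx_i}f(\vx_{\neq i}^{\kappa-1},\hat\vx_i^\kappa)+\frac{1}{\alpha_\kappa}(\vx_i^\kappa-\hat\vx_i^\kappa)+\partial r_i(\vx_i^\kappa)$ and invoke closedness of the graph of the limiting subdifferential. That route is also standard and valid, but note it genuinely \emph{requires} the $r_i$-attentive convergence $r_i(\vx_i^\kappa)\to r_i(\bar\vx_i)$ as a hypothesis of the outer-semicontinuity of $\partial r_i$ (the graph of the limiting subdifferential is closed only along sequences with $r_i(\vx_m)\to r_i(\vx)$), so in your ordering the sandwich argument must be completed \emph{before} the criticality conclusion, whereas in the paper it is a by-product used only for \eqref{fun-lim}. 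You correctly flag this as the crux, and the upper bound does come from exactly the source you identify (test the subproblem optimality at $\vx_i=\bar\vx_i$ and kill the linear and quadratic surrogate terms using $\hat\vx_i^\kappa\to\bar\vx_i$ and boundedness of $L_\kappa\in[\ell,L]$). Your bookkeeping of the shifted index sets $\kappa_i(k)$ is the same device as the paper's $\cK_i=\{k\in\cup_{\kappa=0}^{T-1}(\cK+\kappa):b_k=i\}$; the extra step of freezing $b_k=i_0$ along $\bar\cK$ is harmless but unnecessary. In short: both proofs rest on the same two estimates ($\|\vx^{k-1}-\vx^k\|\to0$ and the $\limsup r_i\le r_i(\bar\vx_i)$ inequality), and differ only in whether the limit is taken at the level of the variational inequality or of the subdifferential inclusion.
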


\begin{remark}
The existence of finite limit point is guaranteed if $\{\vx^k\}_{k\ge1}$ is bounded, and for some applications, the boundedness of $\{\vx^k\}_{k\ge1}$ can be satisfied by setting appropriate parameters in Algorithm \ref{alg:ebpg}; see examples in section \ref{sec:example}. If $r_i$'s are continuous, \eqref{fun-lim} immediately holds. {Since we only assume lower semi-continuity of $r_i$'s, $F(\vx)$ may not converge to $F(\bar{\vx})$ as $\vx\to\bar{\vx}$, so \eqref{fun-lim} is not obvious.}
\end{remark}

\begin{proof}
Assume $\bar{\bfx}$ is a limit point of $\{\bfx^k\}_{k\ge1}$. Then there exists an index set $\cK$ so that the subsequence $\{\bfx^k\}_{k\in\cK}$ converging to $\bar{\bfx}$. From \eqref{eq:sqsum}, we have $\|\vx^{k-1}-\vx^k\|\to 0$ and thus $\{\bfx^{k+\kappa}\}_{k\in\cK}\to\bar{\bfx}$ for any $\kappa\ge 0$. Define
$$\cK_i=\{k\in\cup_{\kappa=0}^{T-1}(\cK+\kappa): b_k=i\},\ i=1,\ldots,s.$$
Take an arbitrary $i\in\{1,\ldots,s\}$. Note $\cK_i$ is an infinite set according to Assumption \ref{assump3}. Taking another subsequence if necessary,  $L_k$ converges to some $\bar{L}_i$ as $\cK_i\ni k\to\infty$.
Note that since $\alpha_k=\frac{1}{2L_k},\forall k$, for any $k\in \cK_i$,
\begin{equation}\label{eq:up-tau}
\bfx_i^{k}\in\argmin_{\bfx_i}\,\langle\nabla_{\vx_i} f(\bfx_{\neq i}^{k-1},\hat{\bfx}_i^k), \bfx_i-\hat{\bfx}_i^k\rangle+L_k\|\bfx_i-\hat{\bfx}_i^k\|^2+r_i(\bfx_i).
\end{equation}
Note from \eqref{eq:sqsum} and \eqref{eq:extrap} that
$\hat{\vx}_i^k\to \bar{\vx}_i$ as $\cK_i\ni k\to\infty$. Since $f$ is continuously differentiable and $r_i$ is lower semicontinuous, letting $\cK_i\ni k\to\infty$ in \eqref{eq:up-tau} yields
\begin{align*}
r_i(\bar{\vx}_i)\le & \liminf_{\cK_i\ni k\to\infty}\left(\nabla_{\vx_i} f(\bfx_{\neq i}^{k-1},\hat{\bfx}_i^k), \bfx_i^k-\hat{\bfx}_i^k\rangle+L_k\|\bfx_i^k-\hat{\bfx}_i^k\|^2+r_i(\bfx_i^k)\right)\\
\overset{\eqref{eq:up-tau}}\le &\liminf_{\cK_i\ni k\to\infty}\left(\nabla_{\vx_i} f(\bfx_{\neq i}^{k-1},\hat{\bfx}_i^k), \bfx_i-\hat{\bfx}_i^k\rangle+L_k\|\bfx_i-\hat{\bfx}_i^k\|^2+r_i(\bfx_i)\right),\quad\forall \vx_i\in\dom(F)\\
= & \langle\nabla_{\vx_i} f(\bar{\bfx}), \bfx_i-\bar{\bfx}_i\rangle+\bar{L}_i\|\bfx_i-\bar{\bfx}_i\|^2+r_i(\bfx_i),\quad\forall \vx_i\in\dom(F).
\end{align*}
Hence,
$$\bar{\bfx}_i\in\argmin_{\bfx_{i}}\,\langle\nabla_{\vx_i} f(\bar{\bfx}), \bfx_i-\bar{\bfx}_i\rangle+\bar{L}_i\|\bfx_i-\bar{\bfx}_i\|^2+r_i(\bfx_i),$$
and $\bar{\bfx}_i$ satisfies the first-order optimality condition:
\begin{equation}\label{eq:blockstat}\mathbf{0}\in \nabla_{\vx_i} f(\bar{\bfx})+\partial r_i(\bar{\bfx}_i).
\end{equation}
Since \eqref{eq:blockstat} holds for arbitrary $i\in\{1,\ldots,s\}$,  $\bar{\bfx}$ is a critical point of \eqref{eq:main}.

In addition, \eqref{eq:up-tau} implies
$$\langle\nabla_{\vx_i} f(\bfx_{\neq i}^{k-1},\hat{\bfx}_i^k), \bfx_i^k-\hat{\bfx}_i^k\rangle+L_k\|\bfx_i^k-\hat{\bfx}_i^k\|^2+r_i(\bfx_i^k)\le \langle\nabla_{\vx_i} f(\bfx_{\neq i}^{k-1},\hat{\bfx}_i^k), \bar{\bfx}_i-\hat{\bfx}_i^k\rangle+L_k\|\bar{\bfx}_i-\hat{\bfx}_i^k\|^2+r_i(\bar{\bfx}_i).$$
Taking limit superior on both sides of the above inequality over $k\in\cK_i$ gives $\underset{\cK_i\ni k\to\infty}\limsup r_i(\bfx_i^k)\le r_i(\bar{\bfx}_i).$
Since $r_i$ is lower semi-continuous, it holds $\underset{\cK_i\ni k\to\infty}\liminf r_i(\bfx_i^k)\ge r_i(\bar{\bfx}_i)$, and thus
$$\lim_{\cK_i\ni k\to\infty}r_i(\vx^k_i)=r_i(\bar{\bfx}_i),\, i=1,\ldots,s.$$
Noting that $f$ is continuous, we complete the proof.
\hfill
\end{proof}

\subsection{Whole sequence convergence and rate}
In this subsection, we establish the whole sequence convergence and rate of Algorithm \ref{alg:ebpg} by assuming Condition \ref{cond-dec}. 
%
%
{We first show that under mild assumptions, Condition \ref{cond-dec} holds for certain $\omega_k>0$.} 

\begin{proposition}\label{prop-cvx}
Let $i=b_k$. Assume 
$\prox_{\alpha_k r_i}$ is single-valued near $\vx_i^{k-1}-\alpha_k\nabla_{\vx_i}f(\vx^{k-1})$ and
\begin{equation}\label{not-opt}\bfx_i^{k-1}\not\in\underset{\bfx_i}{\argmin}\,\langle \nabla_{\vx_i} f(\bfx^{k-1}), \bfx_i-\bfx^{k-1}_i\rangle+\frac{1}{2\alpha_k}\|\bfx_i-\bfx^{k-1}_i\|^2+r_{i}(\bfx_i),
\end{equation}
namely, progress can still be made by updating the $i$-th block. Then, there is $\bar{\omega}_k>0$ such that for any $\omega_k\in[0, \bar{\omega}_k]$, we have $F(\vx^k)\le F(\vx^{k-1})$.
\end{proposition}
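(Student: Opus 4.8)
The plan is to use Lemma \ref{lem:dec0} as the starting point: when block $i=b_k$ is updated, the single-iteration decrease estimate \eqref{eq:dec2-0} gives, writing $j=d_i^k$ for brevity,
\begin{equation*}
F(\vx^{k-1})-F(\vx^k)\ge c_1\tilde L_i^j\|\tilde\vx_i^{j-1}-\tilde\vx_i^j\|^2-c_2\tilde L_i^j(\omega_k)^2\|\tilde\vx_i^{j-2}-\tilde\vx_i^{j-1}\|^2,
\end{equation*}
with $c_1=\tfrac14$, $c_2=9$ (or the relaxed constants of Remark \ref{rm:large-alpha}). The first term is the ``genuine progress'' from the current update and the second is the (bounded) perturbation caused by the extrapolation. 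The right-hand side is a quadratic in $\omega_k$ that is nonnegative on an interval $[0,\bar\omega_k]$ provided the coefficient of the first term, namely $\|\tilde\vx_i^{j-1}-\tilde\vx_i^j\|^2=\|\vx_i^{k-1}-\vx_i^k\|^2$, is strictly positive; explicitly one may take
\begin{equation*}
\bar\omega_k=\frac{1}{\sqrt{c_2}}\cdot\frac{\sqrt{c_1}\,\|\vx_i^{k-1}-\vx_i^k\|}{\|\tilde\vx_i^{j-2}-\tilde\vx_i^{j-1}\|}
\end{equation*}
(with the convention $\bar\omega_k=+\infty$ when the denominator vanishes). Then for every $\omega_k\in[0,\bar\omega_k]$ the right-hand side is $\ge0$, hence $F(\vx^k)\le F(\vx^{k-1})$.

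So the whole argument reduces to showing $\bar\omega_k>0$, i.e. that $\|\vx_i^{k-1}-\vx_i^k\|>0$, equivalently that $\vx_i^k\ne\vx_i^{k-1}$. This is exactly where the two hypotheses of the proposition enter. First I would observe that since $\prox_{\alpha_k r_i}$ is single-valued near $\vx_i^{k-1}-\alpha_k\nabla_{\vx_i}f(\vx^{k-1})$, the minimizer $\tilde\vx_i(\omega)$ of the prox-linear subproblem
\begin{equation*}
\underset{\vx_i}{\argmin}\,\langle\nabla_{\vx_i}f(\vx_{\neq i}^{k-1},\hat\vx_i^k),\vx_i-\hat\vx_i^k\rangle+\tfrac{1}{2\alpha_k}\|\vx_i-\hat\vx_i^k\|^2+r_i(\vx_i)
\end{equation*}
is, for $\omega$ near $0$, a well-defined single-valued function of the extrapolation parameter $\omega$ (because the argument of the prox, $\hat\vx_i^k-\alpha_k\nabla_{\vx_i}f(\vx_{\neq i}^{k-1},\hat\vx_i^k)$, depends continuously on $\omega$, and the prox is single-valued near its value at $\omega=0$). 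At $\omega=0$ we have $\hat\vx_i^k=\vx_i^{k-1}$, so $\tilde\vx_i(0)$ is the minimizer of the subproblem \eqref{not-opt}; assumption \eqref{not-opt} says precisely that $\vx_i^{k-1}$ is \emph{not} this minimizer, hence $\tilde\vx_i(0)\ne\vx_i^{k-1}$, i.e. $\|\tilde\vx_i(0)-\vx_i^{k-1}\|=:\varepsilon>0$. By continuity of $\omega\mapsto\tilde\vx_i(\omega)$ at $0$, there is $\omega'>0$ such that $\|\tilde\vx_i(\omega)-\vx_i^{k-1}\|\ge\varepsilon/2>0$ for all $\omega\in[0,\omega']$.

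Putting the pieces together: for $\omega_k\in[0,\omega']$ the actual iterate $\vx_i^k=\tilde\vx_i(\omega_k)$ satisfies $\|\vx_i^k-\vx_i^{k-1}\|\ge\varepsilon/2$, so the decrease bound above reads $F(\vx^{k-1})-F(\vx^k)\ge c_1\tilde L_i^j(\varepsilon/2)^2-c_2\tilde L_i^j\omega_k^2\|\tilde\vx_i^{j-2}-\tilde\vx_i^{j-1}\|^2$, which is $\ge0$ once $\omega_k\le\sqrt{c_1/c_2}\,(\varepsilon/2)/\|\tilde\vx_i^{j-2}-\tilde\vx_i^{j-1}\|$ as well. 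Taking
\begin{equation*}
\bar\omega_k=\min\Bigl\{\omega',\ \sqrt{c_1/c_2}\,\tfrac{\varepsilon/2}{\|\tilde\vx_i^{j-2}-\tilde\vx_i^{j-1}\|}\Bigr\}>0
\end{equation*}
(again with the convention that the second term is $+\infty$ when the denominator is zero) yields the claim. The main obstacle is the first step of the last-but-one paragraph — justifying that the subproblem solution varies continuously (indeed is single-valued) as a function of the extrapolation weight near $\omega=0$ so that the single non-optimality hypothesis \eqref{not-opt} at $\omega=0$ propagates to a whole interval; everything after that is an elementary manipulation of the quadratic-in-$\omega_k$ lower bound from Lemma \ref{lem:dec0}.
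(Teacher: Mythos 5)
Your proof is correct, and it reaches the conclusion by a genuinely different route than the paper, although both arguments share the same key ingredient: single-valuedness of $\prox_{\alpha_k r_i}$ near $\vx_i^{k-1}-\alpha_k\nabla_{\vx_i}f(\vx^{k-1})$ implies (by Corollary 5.20 and Example 5.23 of \cite{rockafellar2009variational}, which is the citation you would need for your ``main obstacle'') that $\omega\mapsto\vx_i^k(\omega)$ is well defined and continuous at $\omega=0$, and \eqref{not-opt} forces $\vx_i^k(0)\neq\vx_i^{k-1}$. From there the paper proves that $F(\vx^k(\omega))\to F(\vx^k(0))<F(\vx^{k-1})$ as $\omega\to0^+$; since $r_i$ is only lower semicontinuous, this requires an extra step (comparing the subproblem objective at $\vx_i^k(\omega)$ and at arbitrary $\vx_i$, then taking $\limsup$ and $\liminf$) to show $r_i(\vx_i^k(\omega))\to r_i(\vx_i^k(0))$. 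You bypass the continuity of $\omega\mapsto F(\vx^k(\omega))$ entirely: you only need $\|\vx_i^k(\omega)-\vx_i^{k-1}\|\ge\varepsilon/2$ on an interval $[0,\omega']$, which you then feed into the quadratic-in-$\omega_k$ sufficient-decrease bound \eqref{eq:dec2-0}. This is arguably slicker, and it yields an explicit $\bar\omega_k$. Two small points to tidy up: (i) Lemma \ref{lem:dec0} is formally stated under the weight restriction \eqref{eq:alpha-omega}, so either cap your $\bar\omega_k$ additionally by $\frac{\delta}{6}\sqrt{\tilde L_i^{j-1}/\tilde L_i^j}$ (harmless) or note that the proof of \eqref{eq:dec2-0} in Appendix \ref{app-1} never uses the bound on $\omega_k$; (ii) your first displayed choice of $\bar\omega_k$ is circular since $\vx_i^k$ depends on $\omega_k$ --- you recognize this and repair it in the final display, so the first display should be presented only as motivation.
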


By this proposition, we can find $\omega_k>0$ through backtracking to maintain the monotonicity of $F(\vx^k)$. All the examples in section \ref{sec:example} satisfy the assumptions of Proposition \ref{prop-cvx}. 
The proof of Proposition \ref{prop-cvx} involves the continuity of $\prox_{\alpha_kr_i}$ and is deferred to Appendix \ref{app:prop-cvx}.


Under Condition \ref{cond-dec} and the KL property of $F$ (Definition \ref{def:KL}), we show that the  sequence $\{\bfx^k\}$ converges as long as it has a finite limit point. We first establish a  lemma, which has its own importance and together with the KL property implies Lemma 2.6 of \cite{attouch2013convergence}.

{The result in  Lemma \ref{lem:seq} below is very general because we need to apply it to Algorithm \ref{alg:ebpg} in its general form. To ease  understanding, let us  go over its especial cases. If $s=1$, $n_{1,m}=m$ and $\beta=0$, then \eqref{cond-seq} below with $\alpha_{1,m}=\alpha_m$ and $A_{1,m}=A_m$ reduces to $\alpha_{m+1}A_{m+1}^2\le B_m A_m$, which together with Young's inequality gives $\sqrt{\underline{\alpha}} A_{m+1}\le \frac{\sqrt{\underline{\alpha}}}{2}A_m+\frac{1}{2\sqrt{\underline{\alpha}}}B_m$. Hence, if $\{B_m\}_{m\ge1}$ is summable,  so will be $\{A_m\}_{m\ge1}$. This result can be used to analyze the prox-linear method. The more general case of $s>1$, $n_{i,m}=m,\,\forall i$ and $\beta=0$  applies to the cyclic block prox-linear method. In this case, \eqref{cond-seq} reduces to $\sum_{i=1}^s\alpha_{i,m+1}A_{i,m+1}^2\le B_m\sum_{i=1}^sA_{i,m},$ which together with the Young's inequality implies
\begin{equation}\label{spec-case}
\sqrt{\underline{\alpha}}\sum_{i=1}^sA_{i,m+1}\le
\sqrt{s}\sqrt{\sum_{i=1}^s\alpha_{i,m+1}A_{i,m+1}^2}\le \frac{s\tau}{4} B_m+\frac{1}{\tau}\sum_{i=1}^sA_{i,m},
\end{equation}
where $\tau$ is sufficiently large so that $\frac{1}{\tau}<\sqrt{\underline{\alpha}}$. Less obviously but still, if $\{B_m\}_{m\ge1}$ is summable, so will be $\{A_{i,m}\}_{m\ge1},\,\forall i$. Finally, we will need $\beta>0$ in \eqref{cond-seq} to analyze the accelerated block prox-linear method.}
\begin{lemma}\label{lem:seq}
For nonnegative sequences $\{A_{i,j}\}_{j\ge 0},\{\alpha_{i,j}\}_{j\ge0},\, i=1,\ldots,s$, and $\{B_m\}_{m\ge0}$, if $$0<\underline{\alpha}=\inf_{i,j}\alpha_{i,j}\le\sup_{i,j}\alpha_{i,j}=\overline{\alpha}<\infty,$$ and
\begin{equation}\label{cond-seq}
\sum_{i=1}^s\sum_{j=n_{i,m}+1}^{n_{i,m+1}}\big(\alpha_{i,j}A_{i,j}^2-
\alpha_{i,j-1}\beta^2A_{i,j-1}^2\big)\le B_m\sum_{i=1}^s\sum_{j=n_{i,m-1}+1}^{n_{i,m}}A_{i,j},\, 0\le m\le M,
\end{equation}
where $0\le \beta<1$, and $\{n_{i,m}\}_{m\ge0},\forall i$ are nonnegative integer sequences satisfying: $n_{i,m}\le n_{i,m+1}\le n_{i,m}+ N, \forall i, m$, for some integer $N>0$. Then we have
{\begin{equation}\label{cond-seq0}
\sum_{i=1}^s\sum_{j=n_{i,M_1}+1}^{n_{i,M_2+1}}A_{i,j}\le \frac{4sN}{\underline{\alpha}(1-\beta)^2}\sum_{m=1}^{M_2} B_m+\left(\sqrt{s}+\frac{4\beta\sqrt{\overline{\alpha}sN}}{(1-\beta)\sqrt{\underline{\alpha}}}\right)\sum_{i=1}^s\sum_{j=n_{i,M_1-1}+1}^{n_{i,M_1}}A_{i,j}, \text{ for } 0\le M_1<M_2\le M.
\end{equation}}


 In addition, if $\sum_{m=1}^\infty B_m<\infty$, $\lim_{m\to\infty}n_{i,m}=\infty,\forall i$, and \eqref{cond-seq} holds for all $m$, then we have
\begin{equation}\label{cau-seq}
\sum_{j=1}^\infty A_{i,j}<\infty,\ \forall i.
\end{equation}
\end{lemma}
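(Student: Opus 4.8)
\textbf{Proof proposal for Lemma \ref{lem:seq}.}

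The plan is to reduce \eqref{cond-seq} to a one-parameter recursion in the block-summed quantities and then iterate it. First I would introduce the shorthand $S_m \triangleq \sum_{i=1}^s \sum_{j=n_{i,m-1}+1}^{n_{i,m}} A_{i,j}$ for the ``$A$-mass accumulated between the $(m-1)$-st and $m$-th update markers,'' and similarly $T_m \triangleq \sum_{i=1}^s \sum_{j=n_{i,m}+1}^{n_{i,m+1}} \alpha_{i,j} A_{i,j}^2$ for the weighted square mass. The left side of \eqref{cond-seq} is $T_m$ minus a $\beta^2$-discounted shift of the previous square-mass block; since $n_{i,m}\le n_{i,m+1}\le n_{i,m}+N$, the shifted sum $\sum_{i=1}^s\sum_{j=n_{i,m}+1}^{n_{i,m+1}}\alpha_{i,j-1}\beta^2 A_{i,j-1}^2$ is bounded above by $\beta^2 \sum_{i=1}^s\sum_{j=n_{i,m-1}+1}^{n_{i,m}}\alpha_{i,j}A_{i,j}^2 = \beta^2 T_{m-1}$ up to the handling of boundary index blocks. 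So \eqref{cond-seq} gives roughly $T_m \le \beta^2 T_{m-1} + B_m S_m$.

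Next I would pass from $T_m$ back to $S_{m+1}$ via Young's / Cauchy--Schwarz, exactly as in the displayed special case \eqref{spec-case}: since each block $[n_{i,m}+1,n_{i,m+1}]$ has at most $N$ indices and there are $s$ blocks, $S_{m+1} = \sum_{i=1}^s\sum_{j=n_{i,m}+1}^{n_{i,m+1}} A_{i,j} \le \sqrt{sN}\,\big(\sum_{i,j}A_{i,j}^2\big)^{1/2}\le \sqrt{sN/\underline\alpha}\,\sqrt{T_m}$. Combining with $T_m \le \beta^2 T_{m-1} + B_m S_m$ and using $\sqrt{a+b}\le\sqrt a+\sqrt b$ together with $\sqrt{T_{m-1}}\le \sqrt{\overline\alpha}\,$ times an $A$-sum plus the previous bound, one gets an inequality of the schematic form
\begin{equation*}
\sqrt{\underline\alpha}\,S_{m+1} \le \beta \sqrt{\overline\alpha}\, C\, \sqrt{T_{m-1}}/\sqrt{\underline\alpha} + \sqrt{sN}\,\sqrt{B_m}\,\sqrt{S_m},
\end{equation*}
and then another application of Young's inequality on the last term ($\sqrt{B_mS_m}\le \frac{\eps}{2}S_m + \frac{1}{2\eps}B_m$) produces a bona fide linear recursion $S_{m+1}\le \rho S_m + \rho' S_{m-1} + (\text{const})B_m$ with contraction-type coefficients when $\beta<1$. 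Telescoping this recursion from $m=M_1$ to $m=M_2$ and absorbing the geometric tail yields \eqref{cond-seq0}, with the constants $\frac{4sN}{\underline\alpha(1-\beta)^2}$ and $\sqrt s+\frac{4\beta\sqrt{\overline\alpha sN}}{(1-\beta)\sqrt{\underline\alpha}}$ emerging from carefully tracking the $\beta$-geometric series and the Young's-inequality split. Finally, \eqref{cau-seq} follows by letting $M_2\to\infty$ in \eqref{cond-seq0}: the right-hand side stays bounded because $\sum B_m<\infty$ and the $S_{M_1}$ term is fixed, so $\sum_{i,j}A_{i,j}<\infty$ once $\lim_m n_{i,m}=\infty$ guarantees the left side exhausts all indices $j$.

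The main obstacle, I expect, is the bookkeeping of the index shift between the $j$-ranges $[n_{i,m}+1,n_{i,m+1}]$ on the left of \eqref{cond-seq} and $[n_{i,m-1}+1,n_{i,m}]$ on the right, combined with the fact that the $\beta^2$-discounted term is indexed by $j-1$: one must show that the ``borrowed'' square terms from block $m$ are genuinely dominated by block $m-1$'s contribution without double-counting, and that the boundary blocks near $M_1$ and $M_2$ contribute only the stated endpoint terms. Getting the constants to come out exactly as claimed --- rather than merely up to an absolute factor --- requires being disciplined about which inequality ($\sqrt{a+b}\le\sqrt a+\sqrt b$ vs. $(a+b)^2\le 2a^2+2b^2$, and the choice of $\eps$ in Young) is used at each step; a suboptimal choice still proves summability but misses the explicit rate constant, which is needed downstream for the convergence-rate theorem. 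I would handle this by first proving the qualitative statement \eqref{cau-seq} with crude constants to fix the argument structure, then revisiting the chain to optimize, treating the telescoped geometric sum $\sum_{m}\beta^{?}$ as the one place where the factor $(1-\beta)^{-2}$ (not $(1-\beta)^{-1}$) is born.
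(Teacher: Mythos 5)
There is a genuine gap at the very first step of your reduction. You claim that the $\beta^2$-discounted term in \eqref{cond-seq}, namely $\sum_{i}\sum_{j=n_{i,m}+1}^{n_{i,m+1}}\alpha_{i,j-1}\beta^2A_{i,j-1}^2$, is bounded by $\beta^2 T_{m-1}$ ``up to boundary blocks,'' yielding $T_m\le\beta^2T_{m-1}+B_mS_m$. But after the index shift this sum runs over $j\in[n_{i,m},\,n_{i,m+1}-1]$, which --- except for the single endpoint $j=n_{i,m}$ --- lies inside block $m$ itself, \emph{not} inside block $m-1=[n_{i,m-1}+1,n_{i,m}]$. This is not a boundary issue: almost the entire discounted mass is borrowed from the same block. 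The correct manipulation is an exact within-block telescoping: the left side of \eqref{cond-seq} equals
\begin{equation*}
\sum_{i=1}^s\Big(\alpha_{i,n_{i,m+1}}A_{i,n_{i,m+1}}^2+(1-\beta^2)\!\!\sum_{j=n_{i,m}+1}^{n_{i,m+1}-1}\!\!\alpha_{i,j}A_{i,j}^2-\beta^2\alpha_{i,n_{i,m}}A_{i,n_{i,m}}^2\Big),
\end{equation*}
so interior terms survive only with the factor $1-\beta^2$ and the sole quantity transferred across blocks is one endpoint term per $i$. If you repair your recursion in the most natural way by dropping the new endpoint and bounding the old one by $T_{m-1}$, you get $(1-\beta^2)T_m\le\beta^2T_{m-1}+B_mS_m$, whose contraction factor $\beta^2/(1-\beta^2)$ is below $1$ only for $\beta<1/\sqrt2$; the subsequent ``contraction-type'' linear recursion in $S_m$ you posit therefore cannot be established for general $\beta\in[0,1)$, which is the regime the lemma (and the downstream rate theorem) requires.

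The paper's proof avoids this by never discarding the undiscounted new endpoint: it tracks $E_m=\sum_i\alpha_{i,n_{i,m}}A_{i,n_{i,m}}^2$ as the carried-over quantity and passes to square roots, showing via Cauchy--Schwarz and Young (with the $sN$ cardinality bound, as in your sketch) that the left side dominates $\tfrac{1+\beta}{2}\sqrt{E_{m+1}}+C_1\sum_{i,j}A_{i,j}$ while the right side is at most $\beta\sqrt{E_m}+C_2B_m+\tfrac{1}{4C_2}S_m+\tfrac{\sqrt s}{4C_2}\|\vu_m\|$; the margin $\tfrac{1+\beta}{2}-\beta=\tfrac{1-\beta}{2}>0$ at the square-root level is what makes the telescoped sum close for every $\beta<1$ and is where the $(1-\beta)^{-2}$ in \eqref{cond-seq0} originates. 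Your outline of the remaining steps (Cauchy--Schwarz with $\sqrt{sN}$, the Young split of $\sqrt{B_mS_m}$, letting $M_2\to\infty$ for \eqref{cau-seq}) is consistent with the paper, but without identifying the correct carried-over quantity the argument as proposed does not go through.
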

The proof of this lemma is given in Appendix \ref{app:lem-seq}
\begin{remark}
To apply \eqref{cond-seq} to the convergence analysis of Algorithm \ref{alg:ebpg}, we will use $A_{i,j}$ for $\|\tilde{\vx}_i^{j-1}-\tilde{\vx}_i^j\|$ and relate $\alpha_{i,j}$ to Lipschitz constant $\tilde{L}_i^j$. The second term in the bracket of the left hand side of \eqref{cond-seq} is used to handle the extrapolation used in Algorithm \ref{alg:ebpg}, and we require $\beta<1$ such that the first term can dominate the second one after summation. 

\end{remark}

We also need the following result.
\begin{proposition}\label{prop:asymconvg}
Let $\{\bfx^k\}$ be generated from Algorithm \ref{alg:ebpg}. 
For a specific iteration $k\ge 3T$, assume $\vx^\kappa\in \cB_\rho(\bar{\vx}),\,\kappa=k-3T,k-3T+1,\ldots,k$ for some $\bar{\vx}$ and $\rho>0$. If for each $i$, $\nabla_{\vx_i}f(\vx)$ is Lipschitz continuous with constant $L_G$ within $B_{4\rho}(\bar{\vx})$ with respect to $\vx$, i.e.,
$$\|\nabla_{\vx_i}f(\vy)-\nabla_{\vx_i}f(\vz)\|\le L_G\|\vy-\vz\|,\ \forall \vy,\vz\in B_{4\rho}(\bar{\vx}),$$
then
\begin{equation}\label{dist}
\dist(\vzero,\partial F(\vx^k))\le \big(2(L_G+2L)+sL_G\big)\sum_{i=1}^s\sum_{j=d_i^{k-3T}+1}^{d_i^k}\|\tilde{\vx}_i^{j-1}-\tilde{\vx}_i^j\|.
\end{equation}
\end{proposition}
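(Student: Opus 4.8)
The plan is to construct an explicit element of $\partial F(\vx^k)$ and bound its norm by the right-hand side of \eqref{dist}. By the Cartesian-product formula \eqref{eq:cart-prod} for the limiting subdifferential, it suffices to exhibit, for each block $i\in\{1,\ldots,s\}$, a vector $\vg_i\in\nabla_{\vx_i}f(\vx^k)+\partial r_i(\vx_i^k)$ and to bound $\sum_i\|\vg_i\|$ (or $\sqrt{\sum_i\|\vg_i\|^2}$); since the $\ell_2$ norm of the stacked vector is at most the sum of block norms, controlling $\sum_i\|\vg_i\|$ is enough. For the block $i$, let $k_i$ denote the last iteration no later than $k$ at which block $i$ was updated (so $k_i\ge k-T+1$ by Assumption \ref{assump3}, hence $k_i\ge k-3T+1$), and let $j=d_i^{k_i}=d_i^k$. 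The first-order optimality condition for the prox-linear subproblem \eqref{eq:ebpg} at iteration $k_i$ gives
\begin{equation*}
\vzero\in\nabla_{\vx_i}f(\vx_{\neq i}^{k_i-1},\hat{\vx}_i^{k_i})+\frac{1}{\alpha_{k_i}}(\vx_i^{k_i}-\hat{\vx}_i^{k_i})+\partial r_i(\vx_i^{k_i}),
\end{equation*}
and since $\vx_i^{k_i}=\vx_i^k=\tilde{\vx}_i^j$, this produces the candidate subgradient
\begin{equation*}
\vg_i:=\nabla_{\vx_i}f(\vx^k)-\nabla_{\vx_i}f(\vx_{\neq i}^{k_i-1},\hat{\vx}_i^{k_i})-\frac{1}{\alpha_{k_i}}(\tilde{\vx}_i^j-\hat{\vx}_i^{k_i})\in\nabla_{\vx_i}f(\vx^k)+\partial r_i(\vx_i^k).
\end{equation*}

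Next I would bound $\|\vg_i\|$ by the triangle inequality into two pieces. The term $\frac{1}{\alpha_{k_i}}\|\tilde{\vx}_i^j-\hat{\vx}_i^{k_i}\|$ is controlled using $\alpha_{k_i}=\frac{1}{2L_{k_i}}\ge\frac{1}{2L}$ and the extrapolation formula \eqref{eq:extrap}, $\hat{\vx}_i^{k_i}=\tilde{\vx}_i^{j-1}+\omega_{k_i}(\tilde{\vx}_i^{j-1}-\tilde{\vx}_i^{j-2})$, so that $\tilde{\vx}_i^j-\hat{\vx}_i^{k_i}=(\tilde{\vx}_i^j-\tilde{\vx}_i^{j-1})-\tilde\omega_i^j(\tilde{\vx}_i^{j-1}-\tilde{\vx}_i^{j-2})$; with $\tilde\omega_i^j\le1$ this is at most $2L\big(\|\tilde{\vx}_i^j-\tilde{\vx}_i^{j-1}\|+\|\tilde{\vx}_i^{j-1}-\tilde{\vx}_i^{j-2}\|\big)$. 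For the gradient-difference term I would apply the $L_G$-Lipschitz continuity of $\nabla_{\vx_i}f$ on $B_{4\rho}(\bar{\vx})$: it is bounded by $L_G\|\vx^k-(\vx_{\neq i}^{k_i-1},\hat{\vx}_i^{k_i})\|$. The argument $(\vx_{\neq i}^{k_i-1},\hat{\vx}_i^{k_i})$ differs from $\bar\vx$ by at most $\rho$ in the extrapolated $i$-block (since $\|\tilde{\vx}_i^{j-1}-\bar\vx_i\|<\rho$ and $\|\tilde{\vx}_i^{j-2}-\bar\vx_i\|<\rho$ force the extrapolate within $3\rho$ — actually within $4\rho$ after combining with the $\rho$-ball hypothesis, hence inside $B_{4\rho}(\bar\vx)$), which is where the factor $4\rho$ in the hypothesis is needed, so Lipschitzness applies. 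Then $\|\vx^k-(\vx_{\neq i}^{k_i-1},\hat{\vx}_i^{k_i})\|\le\sum_{i'\ne i}\|\vx_{i'}^k-\vx_{i'}^{k_i-1}\|+\|\tilde{\vx}_i^j-\hat{\vx}_i^{k_i}\|$, and each $\|\vx_{i'}^k-\vx_{i'}^{k_i-1}\|$ telescopes into a sum of consecutive-update differences $\|\tilde{\vx}_{i'}^{\ell-1}-\tilde{\vx}_{i'}^\ell\|$ over updates of block $i'$ occurring between iterations $k_i-1$ and $k$, all of which lie in the window $(k-3T,k]$.

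Summing over $i$ and collecting terms, every difference norm that appears is of the form $\|\tilde{\vx}_{i'}^{\ell-1}-\tilde{\vx}_{i'}^\ell\|$ with $d_{i'}^{k-3T}<\ell\le d_{i'}^k$ (the window $k-3T$ gives enough room: block $i$'s update $k_i\ge k-3T+1$, so $j-2\ge d_i^{k-3T}$ when $j\ge d_i^{k-3T}+2$; when $j$ is smaller the earlier differences vanish by the convention $\tilde{\vx}_i^{-1}=\tilde{\vx}_i^0=\vx_i^0$ or simply do not contribute since $\|\tilde\vx_i^{j-1}-\tilde\vx_i^{j-2}\|=0$ in the degenerate case). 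Carefully tallying the coefficients: the $\frac{1}{\alpha}$-term contributes $2L$ twice (from its own appearance and from inside the gradient term), the gradient term contributes $L_G$ on the telescoped cross-block sum — over $s$ blocks this gives the $sL_G$ coefficient — plus $2L_G$ from the $\tilde\vx_i^j-\hat\vx_i^{k_i}$ piece. Adding $2L+2L+2L_G=2(L_G+2L)$ from the per-block in-block terms and $sL_G$ from the cross-block gradient terms yields exactly the constant $2(L_G+2L)+sL_G$.

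\textbf{Main obstacle.} The delicate part is the bookkeeping of indices: verifying that $k_i\ge k-3T+1$ (so that all relevant updates of every block fall in the window $(k-3T,k]$), that the extrapolated point $(\vx_{\neq i}^{k_i-1},\hat\vx_i^{k_i})$ genuinely lies in $B_{4\rho}(\bar\vx)$ so the $L_G$-Lipschitz bound is legitimate, and that the telescoping of $\|\vx_{i'}^k-\vx_{i'}^{k_i-1}\|$ into consecutive-update increments does not double-count or omit any term. Once the index set is pinned down, the estimates themselves are routine triangle-inequality and Lipschitz applications, and the constant falls out by adding the pieces.
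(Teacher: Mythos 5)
Your proposal is correct and follows essentially the same route as the paper's own proof: for each block you extract a subgradient from the optimality condition of its most recent prox-linear update, bound the residual by the $L_G$-Lipschitz estimate on $B_{4\rho}(\bar{\vx})$ plus the $2L$-weighted extrapolation terms, and telescope the cross-block differences over the window $(k-3T,k]$ to land on the constant $2(L_G+2L)+sL_G$. The index bookkeeping you flag as the main obstacle does go through exactly as you sketch (in particular $\vx_{\neq i}^{k_i-1}$ lies within $\rho$ and the extrapolate within $3\rho$ of $\bar{\vx}$, so the evaluation point is in $B_{4\rho}(\bar{\vx})$), which is precisely the argument in the paper.
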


We are now ready to present and show the whole sequence convergence of Algorithm \ref{alg:ebpg}.
\begin{theorem}[Whole sequence convergence]\label{thm:global-ebpg}
Suppose that Assumptions \ref{assump1} through \ref{assump3} and Condition \ref{cond-dec} hold. Let $\{\bfx^k\}_{k\ge1}$ be generated from Algorithm \ref{alg:ebpg}. 
Assume
\begin{enumerate}
\item $\{\bfx^k\}_{k\ge1}$ has a finite limit point $\bar{\bfx}$; 
\item $F$ satisfies the KL property \eqref{eq:KL} around $\bar{\bfx}$ with parameters $\rho$, $\eta$ and $\theta$. 
\item For each $i$, $\nabla_{\vx_i} f(\bfx)$ is Lipschitz continuous within $B_{4\rho}(\bar{\vx})$ with respect to $\bfx$.
\end{enumerate}
Then
$$\lim_{k\to\infty}\bfx^k=\bar{\bfx}.$$
\end{theorem}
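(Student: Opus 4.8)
The plan is to run the standard KL-based ``safe region'' argument à la Attouch--Bolte--Svaiter, adapted to the essentially-cyclic block setting using the machinery already assembled. Let $F^* = F(\bar\vx)$; by Theorem \ref{thm:subseq} and Condition \ref{cond-dec}, $F(\vx^k)$ is nonincreasing and $F(\vx^k)\downarrow F^*$ (monotone convergence plus the fact that $F(\vx^k)\to F(\bar\vx)$ along the subsequence converging to $\bar\vx$). If $F(\vx^{k_0})=F^*$ for some $k_0$, then by \eqref{eq:diff} the tail differences telescope to zero and we are done; so assume $F(\vx^k)>F^*$ for all $k$. Fix the KL neighborhood $\cB_\rho(\bar\vx)$ and the desingularizing function $\phi(a)=ca^{1-\theta}$. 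Choose an index $k_0$ (a multiple of $T$, with $k_0\ge 3T$) large enough that $\vx^{k_0}$ is very close to $\bar\vx$, $F(\vx^{k_0})-F^*<\eta$, and the ``budget'' quantity
\[
\|\vx^{k_0}-\bar\vx\| + C_1\,\phi\big(F(\vx^{k_0})-F^*\big) + C_2\!\!\sum_{i=1}^s\!\!\sum_{j=d_i^{k_0-3T}+1}^{d_i^{k_0}}\!\!\|\tilde\vx_i^{j-1}-\tilde\vx_i^j\|
\]
is smaller than $\rho$ (possible since the subsequence enters any ball around $\bar\vx$ and, by Proposition \ref{prop:sqsum}, the finite sum of a square-summable tail is small); here $C_1,C_2$ are absolute constants to be fixed by the recursion below.

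The inductive claim is that $\vx^k\in\cB_\rho(\bar\vx)$ for all $k\ge k_0$, together with a summable bound on the block-difference norms. The engine is the three ingredients: (i) the sufficient-decrease inequality \eqref{eq:diff}, which gives
$F(\vx^{k-1})-F(\vx^k)\ge \tfrac14\sum_{i}\sum_{j=d_i^{k-1}+1}^{d_i^k}\big(\tilde L_i^j\|\tilde\vx_i^{j-1}-\tilde\vx_i^j\|^2-\tilde L_i^{j-1}\delta^2\|\tilde\vx_i^{j-2}-\tilde\vx_i^{j-1}\|^2\big)$;
(ii) the subgradient bound of Proposition \ref{prop:asymconvg}, valid as long as the last $3T$ iterates lie in $\cB_\rho(\bar\vx)$, giving $\dist(\vzero,\partial F(\vx^k))\le C\sum_{i}\sum_{j=d_i^{k-3T}+1}^{d_i^k}\|\tilde\vx_i^{j-1}-\tilde\vx_i^j\|$; (iii) the KL inequality $\phi'(F(\vx^k)-F^*)\dist(\vzero,\partial F(\vx^k))\ge 1$, i.e. $\dist(\vzero,\partial F(\vx^k))\ge 1/\phi'(F(\vx^k)-F^*)$. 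Combining (i)--(iii) with the concavity estimate $\phi(F(\vx^{k-1})-F^*)-\phi(F(\vx^k)-F^*)\ge \phi'(F(\vx^{k-1})-F^*)\big(F(\vx^{k-1})-F(\vx^k)\big)$ produces, after an application of the arithmetic-geometric inequality to pass from squared differences to differences, a recursion of exactly the form \eqref{cond-seq} in Lemma \ref{lem:seq}: with $A_{i,j}=\|\tilde\vx_i^{j-1}-\tilde\vx_i^j\|$, $\alpha_{i,j}$ proportional to $\tilde L_i^j$, $\beta=\delta<1$, the integer sequences $n_{i,m}=d_i^{mT}$ (so $n_{i,m}\le n_{i,m+1}\le n_{i,m}+T$ by Assumption \ref{assump3}, i.e. $N=T$), and $B_m$ a constant multiple of $\phi(F(\vx^{(m-1)T})-F^*)-\phi(F(\vx^{(m+ \text{const})T})-F^*)$, whose sum telescopes and is therefore finite.

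With the recursion in the form of Lemma \ref{lem:seq}, the bound \eqref{cond-seq0} is precisely what is needed to close the induction: summing the block differences over a block of $T$ iterations is controlled by $\phi(\,\cdot\,)$ evaluated at an earlier iterate plus a geometrically attenuated copy of the previous block's differences, so a standard ``staircase'' summation shows $\sum_{i}\sum_j A_{i,j}<\infty$ over the whole tail and that every partial sum stays within the budget, keeping all $\vx^k$ inside $\cB_\rho(\bar\vx)$ — which retroactively licenses the use of Proposition \ref{prop:asymconvg} at every step. Finiteness of $\sum_k\|\vx^{k-1}-\vx^k\|\le \sum_i\sum_j A_{i,j}$ makes $\{\vx^k\}$ a Cauchy sequence, hence convergent, and since $\bar\vx$ is a limit point the limit must be $\bar\vx$.

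\textbf{Main obstacle.} The delicate point is the bookkeeping needed to fit the essentially-cyclic update into Lemma \ref{lem:seq}: the subgradient estimate (ii) reaches back $3T$ iterations, the decrease (i) involves the one-step-lagged extrapolation term, and the blocks are updated in an arbitrary (merely essentially-cyclic) order, so one must group iterations into windows of length $T$, track the indices $d_i^{k}$ carefully so that $n_{i,m}=d_i^{mT}$ has the right gap bounds, and verify that the cross terms from extrapolation assemble into the $\alpha_{i,j-1}\beta^2 A_{i,j-1}^2$ term with $\beta=\delta<1$ rather than spoiling the telescoping. Getting the chain of inequalities (i)$\Rightarrow$KL$\Rightarrow$(ii) to line up with the window structure — and in particular choosing $k_0$ and the constants so the induction never leaves $\cB_\rho(\bar\vx)$ — is where essentially all the work lies; once the hypotheses of Lemma \ref{lem:seq} are verified, the conclusion is immediate.
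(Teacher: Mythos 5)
Your proposal follows essentially the same route as the paper's proof: the same split into the case $F(\vx^{k_0})=F(\bar{\vx})$ (handled by a geometric-decay/telescoping argument from \eqref{eq:diff}) versus $F(\vx^k)>F(\bar{\vx})$ for all $k$, and in the latter case the same budget/safe-region induction that combines \eqref{eq:diff}, Proposition \ref{prop:asymconvg}, and the KL inequality through concavity of $\phi$, then closes via Lemma \ref{lem:seq} with $A_{i,j}=\|\tilde{\vx}_i^{j-1}-\tilde{\vx}_i^j\|$ and $\beta=\delta$. The bookkeeping issue you flag as the main obstacle is resolved in the paper simply by taking windows of length $3T$ rather than $T$ (i.e., $n_{i,m}=d_i^{3mT}$ and $N=3T$ in Lemma \ref{lem:seq}), which matches the $3T$-step reach of the subgradient bound in Proposition \ref{prop:asymconvg}.
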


\begin{remark}
Before proving the theorem, let us remark on the conditions 1--3. The condition  1 can be guaranteed if $\{\vx^k\}_{k\ge1}$ has a bounded subsequence. The condition 2 is  satisfied for a broad class of applications as we mentioned in section \ref{sec:KL}. The condition 3 is a weak assumption since it requires the Lipschitz continuity only in a bounded set.
\end{remark}

\begin{proof}
From \eqref{fun-lim} and Condition \eqref{cond-dec}, we have $F(\vx^k)\to F(\bar{\vx})$ as $k\to\infty$.
We consider two cases depending on whether there is an integer $K_0$ such that $F(\vx^{K_0})=F(\bar{\vx})$.

\noindent\textbf{Case 1:} Assume $F(\vx^k)>F(\bar{\vx}),\,\forall k$.

Since $\bar{\bfx}$ is a limit point of $\{\vx^k\}$ and according to \eqref{eq:sqsum}, one can choose a sufficiently large $k_0$ such that the points $\bfx^{k_0+\kappa}, \kappa=0,1,\ldots,3T$ are all sufficiently close to $\bar{\bfx}$ and in $\cB_\rho(\bar{\bfx})$, and also the differences $\|\bfx^{k_0+\kappa}-\bfx^{k_0+\kappa+1}\|, \kappa = 0,1,\ldots,3T$ are sufficiently close to \emph{zero}. In addition, note that $F(\vx^k)\to F(\bar{\vx})$ as $k\to\infty$, and thus both $F(\vx^{3(k_0+1)T})-F(\bar{\vx})$ and $\phi(F(\vx^{3(k_0+1)T})-F(\bar{\vx}))$ can be sufficiently small. Since $\{\vx^k\}_{k\ge0}$ converges if and only if $\{\vx^k\}_{k\ge k_0}$ converges, without loss of generality, we assume $k_0=0$, which is equivalent to setting $\vx^{k_0}$ as a new starting point, and thus we assume
\begin{subequations}\label{suff-close}
\begin{align}
&F(\vx^{3T})-F(\bar{\vx}) < \eta,\label{suff-close1}\\
&C\phi\big(F(\vx^{3T})-F(\bar{\vx})\big)+C\sum_{i=1}^s\sum_{j=1}^{d_i^{3T}}\|\tilde{\vx}_i^{j-1}-\tilde{\vx}_i^j\|+\sum_{i=1}^s\|\tilde{\vx}_i^{d_i^{3T}}-\bar{\vx}_i\|\le\rho, \label{suff-close2}
\end{align}
\end{subequations}
where
{\begin{equation}\label{eq-C}
C=\frac{48sT\big(2(L_G+2L)+sL_G\big)}{\ell(1-\delta)^2}\ge \sqrt{s}+\frac{4\delta\sqrt{3sTL}}{(1-\delta)\sqrt{\ell}}.
\end{equation}
}

Assume that $\vx^{3mT}\in\cB_\rho(\bar{\bfx})$ and $F(\vx^{3mT})<F(\bar{\vx})+\eta,\, m = 0,\ldots, M$ for some $M\ge 1$. Note that from \eqref{suff-close}, we can take $M=1$. Letting $k=3mT$ in \eqref{dist} and
using KL inequality \eqref{eq:KL}, we have
\begin{equation}\label{dist1}
\phi'(F(\vx^{3mT})-F(\bar{\vx}))\left(\big(2(L_G+2L)+sL_G\big)\sum_{i=1}^s\sum_{j=d_i^{3(m-1)T}+1}^{d_i^{3mT}}\|\tilde{\vx}_i^{j-1}-\tilde{\vx}_i^j\|\right)\ge 1,
\end{equation}
where $L_G$ is a uniform Lipschitz constant of $\nabla_{\vx_i}f(\vx),\forall i$ within $\cB_{4\rho}(\bar{\vx})$.
In addition, it follows from \eqref{eq:diff} that
\begin{equation}\label{dec3}
F(\vx^{3mT})-F(\vx^{3(m+1)T})\ge \sum_{i=1}^s\sum_{j=d_i^{3m T}+1}^{d_i^{3(m+1)T}}\left(\frac{\tilde{L}_i^j}{4}\|\tilde{\bfx}_i^{j-1}-\tilde{\bfx}_i^j\|^2-\frac{\tilde{L}_i^{j-1}\delta^2}{4}\|\tilde{\bfx}_i^{j-2}-\tilde{\bfx}_i^{j-1}\|^2\right).
\end{equation}

Let $\phi_m=\phi(F(\bfx^{3mT})-F(\bar{\bfx}))$. Note that $$\phi_m-\phi_{m+1}\ge \phi'(F(\bfx^{3mT})-F(\bar{\bfx}))[F(\bfx^{3mT})-F(\bfx^{3(m+1)T})].$$
Combining \eqref{dist1} and \eqref{dec3} with the above inequality and letting $\tilde{C}=2(L_G+2L)+sL_G$ give
\begin{equation}\label{cond-seq-2}
\sum_{i=1}^s\sum_{j=d_i^{3m T}+1}^{d_i^{3(m+1)T}}\left(\frac{\tilde{L}_i^j}{4}\|\tilde{\bfx}_i^{j-1}-\tilde{\bfx}_i^j\|^2-\frac{\tilde{L}_i^{j-1}\delta^2}{4}\|\tilde{\bfx}_i^{j-2}-\tilde{\bfx}_i^{j-1}\|^2\right)
\le \tilde{C}(\phi_m-\phi_{m+1})\sum_{i=1}^s\sum_{j=d_i^{3(m-1)T}+1}^{d_i^{3mT}}\|\tilde{\vx}_i^{j-1}-\tilde{\vx}_i^j\|.
\end{equation}
Letting $A_{i,j}=\|\tilde{\vx}_i^{j-1}-\tilde{\vx}_i^j\|, \alpha_{i,j}=\tilde{L}_i^j/4$, $n_{i,m}=d_i^{3mT}$, $B_m=\tilde{C}(\phi_m-\phi_{m+1})$, and $\beta=\delta$ in Lemma \ref{lem:seq}, we note $d_i^{3m(T+1)}-d_i^{3mT}\le 3T$ and have from \eqref{cond-seq0} that for any intergers $N$ and $M$,
\begin{align}\label{app-key3}
\sum_{i=1}^s\sum_{j=d_i^{3NT}+1}^{d_i^{3(M+1)T}}\|\tilde{\vx}_i^{j-1}-\tilde{\vx}_i^j\|\le C\phi_N+C\sum_{i=1}^s\sum_{j=d_i^{3(N-1)T}+1}^{d_i^{3NT}}\|\tilde{\vx}_i^{j-1}-\tilde{\vx}_i^j\|,
\end{align}
where $C$ is given in \eqref{eq-C}.
Letting $N=1$ in the above inequality, we have
\begin{align*}
\|\vx^{3(M+1)T}-\bar{\vx}\|\le & \sum_{i=1}^s\|\tilde{\vx}_i^{d_i^{3(M+1)T}}-\bar{\vx}_i\|\cr
\le &\sum_{i=1}^s\left(\sum_{j=d_i^{3T}+1}^{d_i^{3(M+1)T}}\|\tilde{\vx}_i^{j-1}-\tilde{\vx}_i^j\|+\|\tilde{\vx}_i^{d_i^{3T}}-\bar{\vx}_i\|\right)\cr
\le & C\phi_1+C\sum_{i=1}^s\sum_{j=1}^{d_i^{3T}}\|\tilde{\vx}_i^{j-1}-\tilde{\vx}_i^j\|+\sum_{i=1}^s\|\tilde{\vx}_i^{d_i^{3T}}-\bar{\vx}_i\|\overset{\eqref{suff-close2}}\le\rho.
\end{align*}

Hence, $\bfx^{3(M+1)T}\in \cB_\rho(\bar{\bfx})$. In addition $F(\bfx^{3(M+1)T})\le F(\vx^{3MT})<F(\bar{\vx})+\eta$. By induction, $\bfx^{3mT}\in \cB_\rho(\bar{\bfx}),\forall m$,  and \eqref{app-key3} holds for all $M$. Using Lemma \ref{lem:seq} again, we have that $\{\tilde{\bfx}_i^j\}$ is a Cauchy sequence for all $i$ and thus converges, and $\{\vx^k\}$ also converges. Since $\bar{\bfx}$ is a limit point of $\{\bfx^k\}$, we have $\bfx^k\to\bar{\bfx}$, as $k\to\infty$.

\noindent\textbf{Case 2:} Assume $F(\vx^{K_0})=F(\bar{\vx})$ for a certain integer $K_0$.

Since $F(\vx^k)$ is nonincreasingly convergent to $F(\bar{\vx})$, we have $F(\vx^k)=F(\bar{\vx}),\,\forall k\ge K_0$. Take $M_0$ such that $3M_0T\ge K_0$. Then $F(\vx^{3mT})=F(\vx^{3(m+1)T})=F(\bar{\vx}),\,\forall m\ge M_0$. Summing up \eqref{dec3} from $m=M\ge M_0$ gives
\begin{align}\label{eq:summ-M}
0\ge & \sum_{m=M}^\infty\sum_{i=1}^s\sum_{j=d_i^{3m T}+1}^{d_i^{3(m+1)T}}\left(\frac{\tilde{L}_i^j}{4}\|\tilde{\bfx}_i^{j-1}-\tilde{\bfx}_i^j\|^2-\frac{\tilde{L}_i^{j-1}\delta^2}{4}\|\tilde{\bfx}_i^{j-2}-\tilde{\bfx}_i^{j-1}\|^2\right)\cr
=& \sum_{m=M}^\infty\sum_{i=1}^s\sum_{j=d_i^{3m T}+1}^{d_i^{3(m+1)T}}\frac{\tilde{L}_i^j(1-\delta^2)}{4}\|\tilde{\bfx}_i^{j-1}-\tilde{\bfx}_i^j\|^2 - \sum_{i=1}^s\sum_{j=d_i^{3m T}}\frac{\tilde{L}_i^j\delta^2}{4}\|\tilde{\bfx}_i^{j-1}-\tilde{\bfx}_i^j\|^2.
\end{align}
Let $$a_m=\sum_{i=1}^s\sum_{j=d_i^{3m T}+1}^{d_i^{3(m+1)T}}\|\tilde{\bfx}_i^{j-1}-\tilde{\bfx}_i^j\|^2, \qquad S_M=\sum_{m=M}^\infty a_m.$$
Noting $\ell\le\tilde{L}_i^j\le L$, we have from \eqref{eq:summ-M} that
$\ell(1-\delta^2)S_{M+1}\le L\delta^2(S_M-S_{M+1})$ and thus
$$S_{M}\le \gamma^{M-M_0} S_{M_0},\,\forall M\ge M_0,$$
where $\gamma=\frac{L\delta^2}{L\delta^2+\ell(1-\delta^2)}<1$. By the Cauchy-Schwarz inequality and noting that $a_m$ is the summation of at most $3T$ nonzero terms, we have
\begin{align}\label{eq:exp-term}
\sum_{i=1}^s\sum_{j=d_i^{3m T}+1}^{d_i^{3(m+1)T}}\|\tilde{\bfx}_i^{j-1}-\tilde{\bfx}_i^j\|\le \sqrt{3T}\sqrt{a_m}\le \sqrt{3T}\sqrt{S_m}\le \sqrt{3T}\gamma^{\frac{m-M_0}{2}}S_{M_0},\,\forall m\ge M_0.
\end{align}
Since $\gamma<1$, \eqref{eq:exp-term} implies
$$\sum_{m=M_0}^\infty\sum_{i=1}^s\sum_{j=d_i^{3m T}+1}^{d_i^{3(m+1)T}}\|\tilde{\bfx}_i^{j-1}-\tilde{\bfx}_i^j\|\le\frac{\sqrt{3T}S_{M_0}}{1-\sqrt{\gamma}}<\infty,$$
and thus $\vx^k$ converges to the limit point $\bar{\vx}$. This completes the proof.
\hfill\end{proof}

In addition, we can show convergence rate of Algorithm \ref{alg:ebpg} through the following lemma. 

\begin{lemma}\label{lem:rate-seq1}
For nonnegative sequence $\{A_k\}_{k=1}^\infty$, if $A_k\le A_{k-1}\le 1,\, \forall k\ge K$ for some integer $K$, and there are positive constants $\alpha,\beta$ and $\gamma$ such that
\begin{equation}\label{cond-seq1}
A_k\le \alpha(A_{k-1}-A_k)^\gamma+\beta(A_{k-1}-A_k),\,\forall k,
\end{equation}
we have
\begin{enumerate}
\item If $\gamma\ge 1$, then $A_k\le \big(\frac{\alpha+\beta}{1+\alpha+\beta}\big)^{k-K}A_K,\,\forall k\ge K$;
\item If $0<\gamma<1$, then $A_k\le \nu(k-K)^{-\frac{\gamma}{1-\gamma}},\,\forall k\ge K,$ for some positive constant $\nu$.
\end{enumerate}
\end{lemma}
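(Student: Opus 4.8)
The plan is to treat the two ranges of $\gamma$ separately, in each case turning the recursive inequality \eqref{cond-seq1} into a standard one-step contraction (for $\gamma\ge1$) or a discrete differential inequality (for $0<\gamma<1$). First I would observe that since $\{A_k\}$ is nonincreasing and bounded by $1$ for $k\ge K$, the quantity $A_{k-1}-A_k$ is nonnegative and at most $1$, and the sequence converges to some limit; the inequality will force that limit to be $0$. Throughout I would write $\Delta_k\triangleq A_{k-1}-A_k\ge0$.

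For the case $\gamma\ge1$: since $\Delta_k\le A_{k-1}\le1$, we have $\Delta_k^\gamma\le\Delta_k$ because $x^\gamma\le x$ for $x\in[0,1]$ and $\gamma\ge1$. Hence \eqref{cond-seq1} gives $A_k\le(\alpha+\beta)\Delta_k=(\alpha+\beta)(A_{k-1}-A_k)$, and rearranging yields $A_k\le\frac{\alpha+\beta}{1+\alpha+\beta}A_{k-1}$ for all $k\ge K$. Iterating this from $K$ gives $A_k\le\big(\frac{\alpha+\beta}{1+\alpha+\beta}\big)^{k-K}A_K$, which is claim~1.

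For the case $0<\gamma<1$: this is the delicate part and the main obstacle, since the dominant term on the right of \eqref{cond-seq1} is now the concave power $\Delta_k^\gamma$, and the $\beta\Delta_k$ term must be absorbed. Here I would use that $\Delta_k\le A_{k-1}\le1$, so $\Delta_k\le\Delta_k^\gamma$, whence $A_k\le(\alpha+\beta)\Delta_k^\gamma=(\alpha+\beta)(A_{k-1}-A_k)^\gamma$; raising to the power $1/\gamma$ gives $A_k^{1/\gamma}\le(\alpha+\beta)^{1/\gamma}(A_{k-1}-A_k)$. Following the standard {\L}ojasiewicz-type argument (as in \cite{attouch2010proximal, bolte2013proximal}), I would introduce $\varphi(t)=t^{-(1-\gamma)/\gamma}$ (up to a constant), which is convex and decreasing on $(0,\infty)$, and estimate $\varphi(A_k)-\varphi(A_{k-1})$ from below. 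Using the convexity bound $\varphi(A_k)-\varphi(A_{k-1})\ge\varphi'(A_{k-1})(A_k-A_{k-1})=-\varphi'(A_{k-1})\Delta_k$ and the inequality just derived (which controls $\Delta_k$ from below in terms of $A_k^{1/\gamma}$, hence in terms of $A_{k-1}$ up to constants once one checks $A_{k-1}$ and $A_k$ are comparable, or alternatively splits into the cases $A_{k-1}\le 2A_k$ and $A_{k-1}>2A_k$), one shows $\varphi(A_k)-\varphi(A_{k-1})\ge\mu$ for some constant $\mu>0$ independent of $k$. Summing from $K$ to $k$ gives $\varphi(A_k)\ge\varphi(A_K)+\mu(k-K)\ge\mu(k-K)$, i.e. $A_k^{-(1-\gamma)/\gamma}\ge\mu(k-K)$, which rearranges to $A_k\le\nu(k-K)^{-\gamma/(1-\gamma)}$ with $\nu=\mu^{-\gamma/(1-\gamma)}$. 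The one technical point to handle carefully is the case split needed to bound $\Delta_k$ below by a constant multiple of $A_{k-1}^{1/\gamma}$: when $A_{k-1}>2A_k$ we have $\Delta_k>A_{k-1}/2$ and the decay is even faster (geometric on that step), so one reduces to the regime where $A_k$ and $A_{k-1}$ are within a bounded factor of each other, where the $\varphi$-increment estimate is clean.
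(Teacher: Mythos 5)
Your proof is correct and follows essentially the same route as the paper's: the same reduction $(A_{k-1}-A_k)^\gamma \lessgtr (A_{k-1}-A_k)$ on $[0,1]$ in the two regimes, and for $0<\gamma<1$ the same telescoping of $\varphi(A_k)=A_k^{1-1/\gamma}$ via the convexity (equivalently, the integral) bound, with the same case split on whether $A_k$ and $A_{k-1}$ are within a fixed factor of each other, using $A_{k-1}^{1-1/\gamma}\ge 1$ in the far-apart case. No substantive differences to report.
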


\begin{theorem}[Convergence rate]\label{thm:rate}
Under the assumptions of Theorem \ref{thm:global-ebpg}, we have:
\begin{enumerate}
\item If $\theta\in[0,\frac{1}{2}]$,  $\|\bfx^k-\bar{\bfx}\|\le C\alpha^k, \forall k$, for a certain $C>0, ~\alpha\in[0,1)$;
\item If $\theta\in(\frac{1}{2},1)$,  $\|\bfx^k-\bar{\bfx}\|\le Ck^{-(1-\theta)/(2\theta-1)}, \forall k$, for a certain $C>0$.
\end{enumerate}
\end{theorem}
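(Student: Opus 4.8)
The plan is to reuse the machinery already assembled in the proof of Theorem~\ref{thm:global-ebpg} and feed the resulting one-step scalar recursion into Lemma~\ref{lem:rate-seq1}. As there, Case~2 (where $F(\vx^{K_0})=F(\bar\vx)$ for some $K_0$) is immediate: the estimate $S_M\le\gamma^{M-M_0}S_{M_0}$ with $\gamma<1$ derived in that proof already gives a geometric tail bound on $\sum_i\sum_j\|\tilde\vx_i^{j-1}-\tilde\vx_i^j\|$, hence the rate claimed in part~1 (which covers $\theta\in[0,\tfrac12]$). So I assume $F(\vx^k)>F(\bar\vx)$ for all $k$, and work in the notation of that proof after its ``new starting point'' reduction, so that $\vx^{3mT}\in\cB_\rho(\bar\vx)$ and $F(\bar\vx)<F(\vx^{3mT})<F(\bar\vx)+\eta$ for all $m$.

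Set $B_m:=\sum_{i=1}^s\sum_{j=d_i^{3(m-1)T}+1}^{d_i^{3mT}}\|\tilde\vx_i^{j-1}-\tilde\vx_i^j\|$ and let $S_M:=\sum_{m>M}B_m$ be the corresponding tails; these are finite and decrease to $0$ by Theorem~\ref{thm:global-ebpg}, so $S_M\le S_{M-1}\le 1$ for $M$ large, and $B_N=S_{N-1}-S_N$. Sending $M\to\infty$ in \eqref{app-key3} gives $S_N\le C\phi_N+C\,B_N$, where $\phi_N=\phi(F(\vx^{3NT})-F(\bar\vx))=c\,(F(\vx^{3NT})-F(\bar\vx))^{1-\theta}$. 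The KL estimate \eqref{dist1}, namely $\phi'(F(\vx^{3NT})-F(\bar\vx))\cdot\tilde C B_N\ge 1$ with $\tilde C=2(L_G+2L)+sL_G$, combined with $\phi'(a)=c(1-\theta)a^{-\theta}$, yields $\bigl(F(\vx^{3NT})-F(\bar\vx)\bigr)^{\theta}\le c(1-\theta)\tilde C\,B_N$, hence $\phi_N\le C'B_N^{(1-\theta)/\theta}$ for a constant $C'$ depending only on $c,\theta,\tilde C$. (Here $\theta=0$ cannot occur, since then \eqref{dist1} would force $B_N\ge 1/(c\tilde C)>0$ for all $N$, contradicting $\sum_N B_N<\infty$.) Combining gives the scalar recursion
\[
S_N\le \alpha\,(S_{N-1}-S_N)^{(1-\theta)/\theta}+\beta\,(S_{N-1}-S_N),\qquad \alpha=CC',\ \beta=C,
\]
valid for all large $N$, which is exactly \eqref{cond-seq1} with exponent $\gamma=(1-\theta)/\theta$.

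Applying Lemma~\ref{lem:rate-seq1} to $\{S_N\}$: for $\theta\in(0,\tfrac12]$ we have $\gamma\ge1$, so $S_N\le\bigl(\tfrac{\alpha+\beta}{1+\alpha+\beta}\bigr)^{N-K}S_K$; for $\theta\in(\tfrac12,1)$ we have $0<\gamma<1$ and $S_N\le\nu(N-K)^{-\gamma/(1-\gamma)}$ with $\gamma/(1-\gamma)=(1-\theta)/(2\theta-1)$. It remains to pass from $S_N$ to $\|\vx^k-\bar\vx\|$. Since $\tilde\vx_i^j\to\bar\vx_i$, telescoping gives $\|\vx^{3MT}-\bar\vx\|\le\sum_i\sum_{j>d_i^{3MT}}\|\tilde\vx_i^{j-1}-\tilde\vx_i^j\|=S_M$; and for general $k$ with $3mT\le k<3(m+1)T$, the term $\|\vx^k-\vx^{3mT}\|$ is bounded by a partial block sum over at most $3T$ steps, hence by $B_{m+1}\le S_m$, so $\|\vx^k-\bar\vx\|\le 2S_m$ with $m\ge k/(3T)-1$. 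Substituting the geometric (resp.\ polynomial) bound on $S_m$ and absorbing constants turns decay in $m$ into decay in $k$ at the same type of rate (the base $\rho^{1/(3T)}<1$ in the geometric case, the same polynomial exponent in the other), giving the two stated estimates.

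The main obstacle is the step that bounds $\phi_N$ by a power of $B_N$: it requires pairing the KL inequality at the sampled iterate $\vx^{3NT}$ with precisely the subgradient estimate of Proposition~\ref{prop:asymconvg} (as recorded in \eqref{dist1}), whose right-hand side is an increment sum over the preceding $3T$ iterations and therefore matches the index range defining $B_N$; a naive one-step subgradient bound would not line up with the block ranges appearing in \eqref{app-key3}. Everything else is bookkeeping: checking the hypotheses of Lemma~\ref{lem:rate-seq1} ($S_N\downarrow 0$, the relabeling making $S_N\le1$), absorbing the finite-termination Case~2, and the elementary conversion between the super-iteration counter $m$ and the iteration counter $k$.
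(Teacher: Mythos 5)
Your proposal is correct and follows essentially the same route as the paper's proof: it combines the tail estimate \eqref{app-key3} (with $M\to\infty$) and the KL/subgradient bound \eqref{dist1} to obtain the recursion $S_N\le \alpha(S_{N-1}-S_N)^{(1-\theta)/\theta}+\beta(S_{N-1}-S_N)$, then invokes Lemma~\ref{lem:rate-seq1}, exactly as in the paper (your $S_N$ and $B_N$ are the paper's $A_m$ and $A_{m-1}-A_m$). The handling of the finite-termination case and of $\theta=0$, and the conversion from the super-iteration counter $m$ back to $k$, also match the paper's argument.
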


\begin{proof}
When $\theta=0$, then $\phi'(a) = c,\forall a$, and there must be a sufficiently large integer $k_0$ such that $F(\vx^{k_0})=F(\bar{\vx})$, and thus $F(\vx^k)=F(\bar{\vx}), \forall k\ge k_0$, by noting $F(\vx^{k-1})\ge F(\vx^k)$ and $\lim_{k\to\infty}F(\vx^k)=F(\bar{\vx})$. Otherwise $F(\vx^k)>F(\bar{\vx}),\forall k$. Then from the KL inequality \eqref{eq:KL}, it holds that $c\cdot\dist(\vzero,\partial F(\vx^k))\ge 1,$ for all $\vx^k\in \cB_\rho(\bar{\vx})$, which is impossible since $\dist(\vzero,\partial F(\vx^{3mT}))\to 0$ as $m\to\infty$ from \eqref{dist}.

For $k>k_0$, since $F(\vx^{k-1})=F(\vx^k)$, and noting that in \eqref{eq:diff} all terms but one are zero under the summation over $i$, 
we have
$$\sum_{i=1}^s\sum_{j=d_i^{k-1}+1}^{d_i^k}\sqrt{\tilde{L}_i^{j-1}}
\delta\|\tilde{\bfx}_i^{j-2}-\tilde{\bfx}_i^{j-1}\|\ge \sum_{i=1}^s\sum_{j=d_i^{k-1}+1}^{d_i^k}\sqrt{\tilde{L}_i^j}
\|\tilde{\bfx}_i^{j-1}-\tilde{\bfx}_i^j\|.$$
Summing the above inequality over $k$ from $m>k_0$ to $\infty$ and using $\ell\le \tilde{L}_i^j\le L,\forall i,j$, we have
\begin{equation}\label{sumseq1}
\sqrt{L}\delta\sum_{i=1}^s\|\tilde{\vx}_i^{d_i^{m-1}-1}- \tilde{\vx}_i^{d_i^{m-1}}\|\ge \sqrt{\ell}(1-\delta)\sum_{i=1}^s\sum_{j=d_i^{m-1}+1}^\infty\|\tilde{\bfx}_i^{j-1}-\tilde{\bfx}_i^j\|,\ \forall m>k_0.
\end{equation}
Let
$$B_m=\sum_{i=1}^s\sum_{j=d_i^{m-1}+1}^\infty\|\tilde{\bfx}_i^{j-1}-\tilde{\bfx}_i^j\|.$$
Then from Assumption \ref{assump3}, we have
$$B_{m-T}-B_m=\sum_{i=1}^s\sum_{j=d_i^{m-T-1}+1}^{d_i^{m-1}}\|\tilde{\bfx}_i^{j-1}-\tilde{\bfx}_i^j\|\ge\sum_{i=1}^s\|\tilde{\vx}_i^{d_i^{m-1}-1}- \tilde{\vx}_i^{d_i^{m-1}}\|.$$
which together with \eqref{sumseq1} gives
$B_m\le \frac{\sqrt{L}\delta}{\sqrt{\ell}(1-\delta)}(B_{m-T}-B_m).$
Hence, $$B_{mT}\le\left(\frac{\sqrt{L}\delta}{\sqrt{L}\delta+\sqrt{\ell}(1-\delta)}\right)B_{(m-1)T}\le \left(\frac{\sqrt{L}\delta}{\sqrt{L}\delta+\sqrt{\ell}(1-\delta)}\right)^{m-\ell_0} B_{\ell_0 T},$$
where $\ell_0=\min\{\ell: \ell T\ge k_0\}$. 
Letting $\alpha=\big(\frac{\sqrt{L}\delta}{\sqrt{L}\delta+\sqrt{\ell}(1-\delta)}\big)^{1/T}$, we have
\begin{equation}\label{exp-convg}B_{mT}\le \alpha^{mT}\big(\alpha^{-\ell_0 T} B_{\ell_0 T}\big).
\end{equation}
Note $\|\vx^{m-1}-\bar{\vx}\|\le B_m$. Hence, choosing a sufficiently large $C>0$ gives the result in item 1 for $\theta=0$.

When $0<\theta<1$, if for some $k_0$, $F(\vx^{k_0})=F(\bar{\vx})$, we have \eqref{exp-convg} by the same arguments as above and thus obtain linear convergence. Below we assume $F(\vx^k)>F(\bar{\vx}),\,\forall k$. Let
$$A_m=\sum_{i=1}^s\sum_{j=d_i^{3mT}+1}^\infty\|\tilde{\vx}_i^{j-1}-\tilde{\vx}_i^j\|,$$
and thus
$$A_{m-1}-A_m=\sum_{i=1}^s\sum_{j=d_i^{3(m-1)T}+1}^{d_i^{3mT}}\|\tilde{\vx}_i^{j-1}-\tilde{\vx}_i^j\|.$$
From \eqref{dist1}, it holds that
$$c(1-\theta)\big(F(\vx^{3mT})-F(\bar{\vx})\big)^{-\theta}\ge \big((2(L_G+2L)+sL_G)(A_{m-1}-A_m)\big)^{-1},$$
which implies
\begin{equation}\label{phim}\phi_m=c\big(F(\vx^{3mT})-F(\bar{\vx})\big)^{1-\theta}\le c\left(c(1-\theta)(2(L_G+2L)+sL_G)(A_{m-1}-A_m)\right)^{\frac{1-\theta}{\theta}}.
\end{equation}
In addition, letting $N=m$ in \eqref{app-key3}, we have
\begin{align*}
\sum_{i=1}^s\sum_{j=d_i^{3mT}+1}^{d_i^{3(M+1)T}}\|\tilde{\vx}_i^{j-1}-\tilde{\vx}_i^j\|\le C\phi_m+C\sum_{i=1}^s\sum_{j=d_i^{3(m-1)T}+1}^{d_i^{3mT}}\|\tilde{\vx}_i^{j-1}-\tilde{\vx}_i^j\|,
\end{align*}
where $C$ is the same as that in \eqref{app-key3}.
Letting $M\to\infty$ in the above inequality, we have
$$A_m\le C_1\phi_m+C_1(A_{m-1}-A_m)\le C_1c\left(c(1-\theta)(2(L_G+2L)+sL_G)(A_{m-1}-A_m)\right)^{\frac{1-\theta}{\theta}}+C_1(A_{m-1}-A_m),$$
where the second inequality is from \eqref{phim}. Since $A_{m-1}-A_m\le 1$ as $m$ is sufficiently large and $\|\vx^m-\bar{\vx}\|\le A_{\lfloor \frac{m}{3T}\rfloor}$, the results in item 2 for $\theta\in(0,\frac{1}{2}]$ and item 3 now immediately follow from Lemma \ref{lem:rate-seq1}.
\hfill\end{proof}

Before closing this section, let us make some comparison to the recent work \cite{xu2013block}. The whole sequence convergence and rate results in this paper are the same as those in \cite{xu2013block}. However, the results here cover more applications. We do not impose any convexity assumption on \eqref{eq:main} while \cite{xu2013block} requires $f$ to be block-wise convex and every $r_i$ to be convex. In addition, the results in \cite{xu2013block} only apply to cyclic block prox-linear method. Empirically, a different block-update order can give better performance. As demonstrated in \cite{chang2008coordinate}, random shuffling can often improve the efficiency of the coordinate descent method for linear support vector machine, and \cite{xu2015spntd} shows that for the Tucker tensor decomposition (see \eqref{ntd}), updating the core tensor more frequently can be better than cyclicly updating the core tensor and factor matrices. 

\section{Applications and numerical results}\label{sec:example}
In this section, we give some specific examples of \eqref{eq:main} and show the whole sequence convergence of some existing algorithms. In addition, we demonstrate that maintaining the nonincreasing monotonicity of the objective value can improve the convergence of accelerated gradient method and that updating variables in a random order can improve the performance of Algorithm \ref{alg:ebpg} over that in the cyclic order.

\subsection{FISTA with backtracking extrapolation} FISTA \cite{BeckTeboulle2009} is an accelerated proximal gradient method for solving composite convex problems. It is a special case of Algorithm \ref{alg:ebpg} with $s=1$ and specific $\omega_k$'s. For the readers' convenience, we present the method in Algorithm \ref{alg:fista}, where both $f$ and $g$ are convex functions, and $L_f$ is the Lipschitz constant of $\nabla f(\vx)$. The algorithm reaches the optimal order of convergence rate among first-order methods, but in general, it does not guarantee monotonicity of the objective values. A restarting scheme is studied in \cite{o2013adaptive} that restarts FISTA from $\vx^k$ whenever $F(\vx^{k+1})>F(\vx^k)$ occurs\footnote{Another restarting option is tested based on gradient information}. It is demonstrated that the restarting FISTA can significantly outperform the original one. In this subsection, we show that FISTA with backtracking extrapolation weight can do even better than the restarting one.

\begin{algorithm}[ht]\label{alg:fista}\caption{Fast iterative shrinkage-thresholding algorithm (FISTA)}
\DontPrintSemicolon
\textbf{Goal:} to solve convex problem $\min_\vx F(\vx)= f(\vx)+g(\vx)$\;
\textbf{Initialization:} set $\vx^0=\vx^1$, $t_1=1$, and $\omega_1=0$\;
\For{$k=1,2,\ldots$}{
Let $\hat{\vx}^k=\vx^k+\omega_k(\vx^k-\vx^{k-1})$\;
Update $\vx^{k+1}=\argmin_\vx \langle \nabla f(\hat{\vx}^k),\vx-\hat{\vx}^k\rangle+\frac{L_f}{2}\|\vx-\hat{\vx}^k\|^2+g(\vx)$\;
Set $t_{k+1}=\frac{1+\sqrt{1+4t_k^2}}{2}$ and $\omega_{k+1}=\frac{t_k-1}{t_{k+1}}$
}
\end{algorithm}

We test the algorithms on solving the following problem
$$\min_\vx \frac{1}{2}\|\vA\vx-\vb\|^2+\lambda\|\vx\|_1,$$
where $\vA\in\RR^{m\times n}$ and $\vb\in\RR^m$ are given. In the test, we set $m=100$, $n=2000$ and $\lambda=1$, and we generate the data in the same way as that in \cite{o2013adaptive}: first generate $\vA$ with all its entries independently following standard normal distribution $\cN(0,1)$, then a sparse vector $\vx$ with only 20 nonzero entries independently following $\cN(0,1)$, and finally let $\vb=\vA\vx+\vy$ with the entries in $\vy$ sampled from $\cN(0,0.1)$. This way ensures the optimal solution is approximately sparse. We set $L_f$ to the spectral norm of $\vA^*\vA$ and the initial point to \emph{zero} vector for all three methods. Figure \ref{fig:lasso} plots their convergence behavior, and it shows that the proposed backtracking scheme on $\omega_k$ can significantly improve the convergence of the algorithm.

\begin{figure}
\centering
\includegraphics[width=0.4\textwidth]{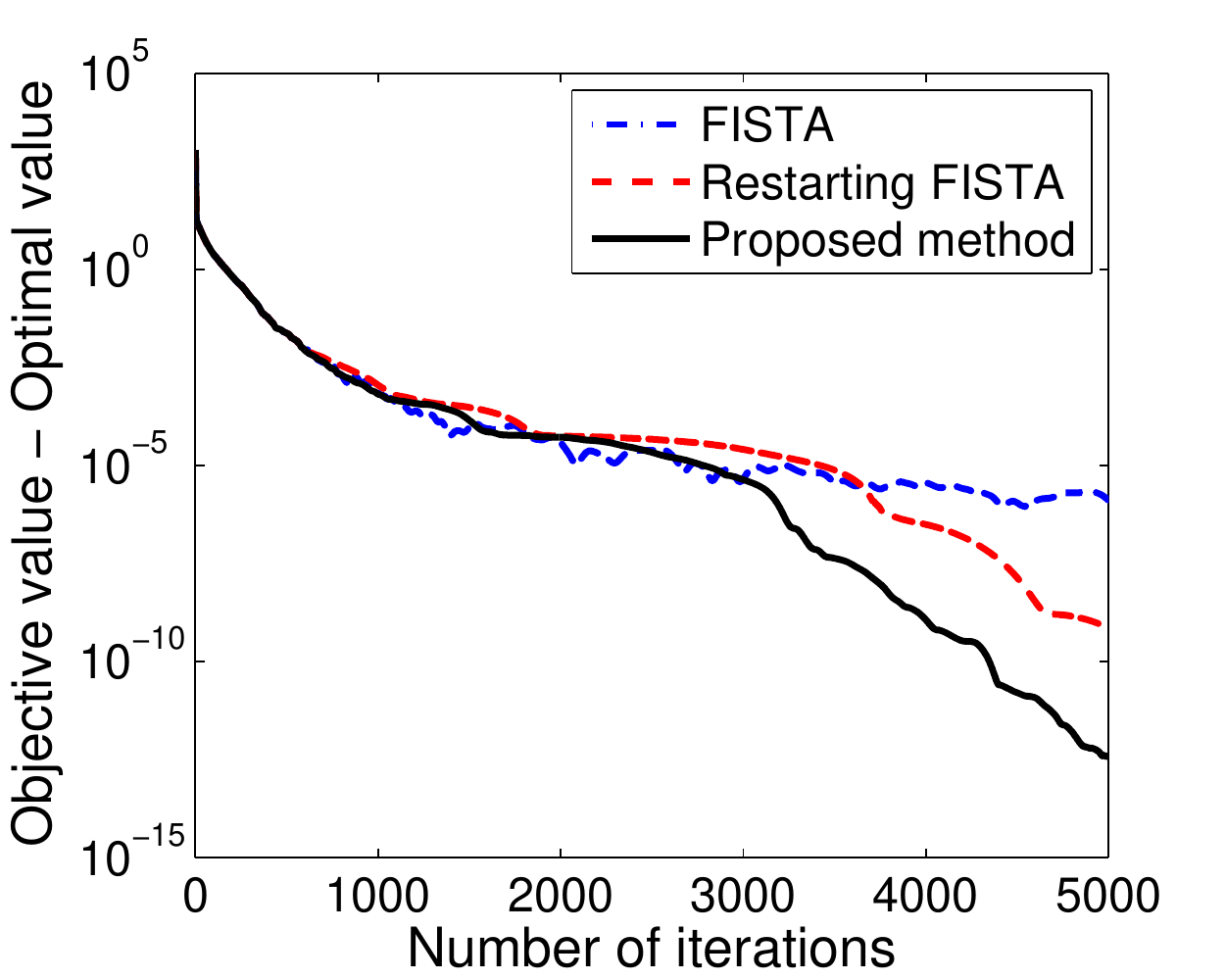}
\caption{Comparison of the FISTA \cite{BeckTeboulle2009}, the restarting FISTA \cite{o2013adaptive}, and the proposed method with backtracking $\omega_k$ to ensure Condition \ref{cond-dec}.}
\label{fig:lasso}
\end{figure}

\subsection{Coordinate descent method for nonconvex regression} As the number of predictors is larger than sample size, variable selection becomes important to keep more important predictors and obtain a more interpretable model, and penalized regression methods are popularly used to achieve variable selection. The work \cite{breheny2011coordinate} considers the linear regression with nonconvex penalties: the minimax concave penalty (MCP) \cite{zhang2010nearly} and the smoothly clipped absolute deviation (SCAD) penalty \cite{fan2001variable}. Specifically, the following model is considered
\begin{equation}\label{noncov}
\min_{\bm{\beta}}\frac{1}{2n}\|\vX\bm{\beta}-\vy\|^2+\sum_{j=1}^p r_{\lambda,\gamma}(\beta_j),
\end{equation}
where $\vy\in\RR^n$ and $\vX\in\RR^{n\times p}$ are standardized such that \begin{equation}\label{stand}
\sum_{i=1}^n y_{i}=0,\ \sum_{i=1}^n x_{ij}=0,\,\forall j,\text{ and }\frac{1}{n}\sum_{i=1}^n x_{ij}^2=1,\, \forall j,
\end{equation}
and MCP is defined as
\begin{equation}\label{mcp}
r_{\lambda,\gamma}(\theta)=\left\{
\begin{array}{ll}
\lambda|\theta|-\frac{\theta^2}{2\gamma},&\text{ if }|\theta|\le \gamma\lambda,\\
\frac{1}{2}\gamma\lambda^2,&\text{ if }|\theta|>\gamma\lambda,
\end{array}
\right.
\end{equation}
and SCAD penalty is defined as
\begin{equation}\label{scad}
r_{\lambda,\gamma}(\theta)=\left\{
\begin{array}{ll}
\lambda|\theta|,&\text{ if }|\theta|\le \lambda,\\
\frac{2\gamma\lambda|\theta|-(\theta^2+\lambda^2)}{2(\gamma-1)},&\text{ if }\lambda<|\theta|\le \gamma\lambda,\\
\frac{\lambda^2(\gamma^2-1)}{2(\gamma-1)},&\text{ if }|\theta|>\gamma\lambda.
\end{array}
\right.
\end{equation}
The cyclic coordinate descent method used in \cite{breheny2011coordinate} performs the update from $j=1$ through $p$
\begin{equation*}
\beta_j^{k+1}=\argmin_{\beta_j}\frac{1}{2n}\|\vX(\bm{\beta}_{<j}^{k+1},\beta_j,\bm{\beta}_{>j}^k)-\vy\|^2+r_{\lambda,\gamma}(\beta_j),
\end{equation*}
which can be equivalently written into the form of \eqref{eq:ebpg} by
\begin{equation}\label{cd-noncov}
\beta_j^{k+1}=\argmin_{\beta_j}\frac{1}{2n}\|\vx_j\|^2(\beta_j-\beta_j^k)^2+\frac{1}{n}\vx_j^\top\big(\vX(\bm{\beta}_{<j}^{k+1},\bm{\beta}_{\ge j}^k)-\vy\big)\beta_j+r_{\lambda,\gamma}(\beta_j).
\end{equation}

Note that the data has been standardized such that $\|\vx_j\|^2=n$. Hence, if $\gamma>1$ in \eqref{mcp} and $\gamma>2$ in \eqref{scad}, it is easy to verify that the objective in \eqref{cd-noncov} is strongly convex, and there is a unique minimizer. From the convergence results of \cite{Tseng-01}, it is concluded in \cite{breheny2011coordinate} that any limit point\footnote{It is stated in  \cite{breheny2011coordinate} that the sequence generated by \eqref{cd-noncov} converges to a coordinate-wise minimizer of \eqref{noncov}. However, the result is obtained directly from \cite{Tseng-01}, which only guarantees subsequence convergence.} of the sequence $\{\bm{\beta}^k\}$ generated by \eqref{cd-noncov} is a coordinate-wise minimizer of \eqref{noncov}. Since $r_{\lambda,\gamma}$ in both \eqref{mcp} and \eqref{scad} is piecewise polynomial and thus semialgebraic, it satisfies the KL property (see Definition \ref{def:KL}). In addition, let $f(\bm{\beta})$ be the objective of \eqref{noncov}. Then
$$f(\bm{\beta}_{< j}^{k+1},\bm{\beta}_{\ge j}^k)-f(\bm{\beta}_{\le j}^{k+1},\bm{\beta}_{>j}^k)\ge \frac{\mu}{2}(\beta_j^{k+1}-\beta_j^{k})^2,$$
where $\mu$ is the strong convexity constant of the objective in \eqref{cd-noncov}. Hence, according to Theorem \ref{thm:global-ebpg} and Remark \ref{rm:large-alpha}, we have the following convergence result.

\begin{theorem}\label{thm:noncov}
Assume $\vX$ is standardized as in \eqref{stand}. Let $\{\bm{\beta}^k\}$ be the sequence generated from \eqref{cd-noncov} or by the following update with random shuffling of coordinates
$$\beta_{\pi^k_j}^{k+1}=\argmin_{\beta_{\pi^k_j}}\frac{1}{2n}\|\vx_{\pi^k_j}\|^2(\beta_{\pi^k_j}-\beta_{\pi^k_j}^k)^2+\frac{1}{n}\vx_{\pi^k_j}^\top\big(\vX(\bm{\beta}_{\pi^k_{<j}}^{k+1},\bm{\beta}_{\pi^k_{\ge j}}^k)-\vy\big)\beta_{\pi^k_j}+r_{\lambda,\gamma}(\beta_{\pi^k_j}),$$
where $(\pi^k_1,\ldots,\pi^k_p)$ is any permutation of $(1,\ldots,p)$, and $r_{\lambda,\gamma}$ is given by either  \eqref{mcp} with $\gamma>1$ or \eqref{scad} with $\gamma>2$. If $\{\bm{\beta}^k\}$ has a finite limit point, then $\bm{\beta}^k$ converges to a coordinate-wise minimizer of \eqref{noncov}.
\end{theorem}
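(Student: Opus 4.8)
The plan is to verify that the iteration \eqref{cd-noncov} (and its randomly shuffled variant) is a genuine instance of Algorithm \ref{alg:ebpg} and then simply invoke Theorem \ref{thm:global-ebpg}. First I would identify the blocks: each coordinate $\beta_j$ is a block, so $s=p$, the smooth part is $f(\bm\beta)=\frac{1}{2n}\|\vX\bm\beta-\vy\|^2$, and the nonsmooth parts are $r_j=r_{\lambda,\gamma}$ for each $j$. Since $r_{\lambda,\gamma}$ in \eqref{mcp} or \eqref{scad} is piecewise polynomial, hence semialgebraic, the objective $F$ satisfies the KL property (Definition \ref{def:KL}) at every point, giving condition 2 of Theorem \ref{thm:global-ebpg}. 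The gradient $\nabla_{\beta_j}f$ is affine in $\bm\beta$, so it is globally Lipschitz with $L_k=\frac{1}{n}\|\vx_j\|^2=1$ after standardization \eqref{stand}, which gives Assumption \ref{assump2} (with $\ell=L=1$) and condition 3. Cyclic or random-shuffle order satisfies the essentially cyclic Assumption \ref{assump3} with $T=p$.

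Next I would address the stepsize/extrapolation bookkeeping. The update \eqref{cd-noncov} is exactly \eqref{eq:ebpg} with $\omega_k=0$ (no extrapolation, $\hat{\bm\beta}^k=\bm\beta^k$) and with stepsize $\alpha_k$ chosen so that $\frac{1}{2\alpha_k}=\frac{1}{2n}\|\vx_j\|^2$, i.e. $\alpha_k=\frac{n}{\|\vx_j\|^2}=1=\frac{1}{L_k}$; by Remark \ref{rm:large-alpha} (or Remark \ref{rm:multiconv}, since here the per-block problem is in fact strongly convex) this choice of $\alpha_k$ with $\omega_k=0$ is admissible and all convergence results apply. Condition \ref{cond-dec} (nonincreasing objective) then follows immediately from the strong convexity inequality stated just before the theorem, $f(\bm\beta_{<j}^{k+1},\bm\beta_{\ge j}^k)-f(\bm\beta_{\le j}^{k+1},\bm\beta_{>j}^k)\ge\frac{\mu}{2}(\beta_j^{k+1}-\beta_j^k)^2\ge 0$, since the nonsmooth terms $r_{\lambda,\gamma}(\beta_i)$ for $i\ne j$ are unchanged and $r_{\lambda,\gamma}(\beta_j)$ only decreases across the argmin step; summing over the cycle gives $F(\bm\beta^{k+1})\le F(\bm\beta^k)$. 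Strong convexity of \eqref{cd-noncov} requires $\gamma>1$ for MCP and $\gamma>2$ for SCAD, which is precisely the hypothesis; this also makes $\prox_{\alpha_k r_j}$ single-valued, so Proposition \ref{prop-cvx} applies and there is nothing pathological.

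Finally, with conditions 1--3 of Theorem \ref{thm:global-ebpg} and Condition \ref{cond-dec} all in hand (condition 1 being the assumed existence of a finite limit point), Theorem \ref{thm:global-ebpg} yields $\bm\beta^k\to\bar{\bm\beta}$ for some critical point $\bar{\bm\beta}$ of \eqref{noncov}, i.e. $\vzero\in\partial F(\bar{\bm\beta})$. To conclude that $\bar{\bm\beta}$ is a \emph{coordinate-wise} minimizer, I would use the block separability of the subdifferential \eqref{eq:cart-prod}: $\vzero\in\partial F(\bar{\bm\beta})$ means $0\in\nabla_{\beta_j}f(\bar{\bm\beta})+\partial r_{\lambda,\gamma}(\bar\beta_j)$ for every $j$, which is exactly the first-order optimality condition for minimizing $F$ over the single coordinate $\beta_j$ with the others fixed; since the one-variable function $\beta_j\mapsto\frac{1}{2n}\|\vx_j\|^2(\beta_j-\bar\beta_j)^2+(\text{linear})+r_{\lambda,\gamma}(\beta_j)$ is strongly convex (same $\gamma$ conditions), this stationarity condition is in fact sufficient for global coordinate-wise minimality. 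The main obstacle — really the only subtle point — is checking that the strong convexity per block (hence Remark \ref{rm:multiconv}'s larger stepsize, Condition \ref{cond-dec}, and the upgrade from ``critical point'' to ``coordinate-wise minimizer'') genuinely holds under $\gamma>1$ (MCP) and $\gamma>2$ (SCAD); this is a short computation on the second derivative of $\frac{1}{2n}\|\vx_j\|^2\theta^2+r_{\lambda,\gamma}(\theta)$ using $\frac{1}{n}\|\vx_j\|^2=1$, comparing the curvature $1$ against the concavity $-\frac{1}{\gamma}$ (MCP) or $-\frac{1}{\gamma-1}$ (SCAD) of the penalty on its middle piece. Everything else is bookkeeping to match notation with Algorithm \ref{alg:ebpg}.
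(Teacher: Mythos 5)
Your proposal is correct and follows essentially the same route as the paper: verify the hypotheses of Theorem~\ref{thm:global-ebpg} (KL via piecewise-polynomial/semialgebraic $r_{\lambda,\gamma}$, unit Lipschitz constants from the standardization \eqref{stand}, essentially cyclic order, and the sufficient decrease $F(\bm{\beta}^k)-F(\bm{\beta}^{k+1})\ge\frac{\mu}{2}(\beta_j^{k+1}-\beta_j^k)^2$ from strong convexity of the subproblem, with $\omega_k=0$ and the relaxed stepsize of Remark~\ref{rm:large-alpha}), then invoke whole-sequence convergence. Your explicit upgrade from ``critical point'' to ``coordinate-wise minimizer'' via strong convexity of each one-variable restriction under $\gamma>1$ (MCP) or $\gamma>2$ (SCAD) is a detail the paper leaves implicit, and it is argued correctly.
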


\subsection{Rank-one residue iteration for nonnegative matrix factorization} The nonnegative matrix factorization can be modeled as
\begin{equation}\label{nmf}
\min_{\vX,\vY}\|\vX\vY^\top-\vM||_F^2, \st \vX\in\RR_+^{m\times p},\, \vY\in\RR_+^{n\times p},
\end{equation}
where $\vM\in\RR_+^{m\times n}$ is a given nonnegative matrix, $\RR_+^{m\times p}$ denotes the set of $m\times p$ nonnegative matrices, and $p$ is a user-specified rank. The problem in \eqref{nmf} can be written in the form of \eqref{eq:main} by letting $$f(\vX,\vY)=\frac{1}{2}\|\vX\vY^\top-\vM||_F^2,\quad  r_1(\vX)=\iota_{\RR_+^{m\times p}}(\vX), \quad r_2(\vY)=\iota_{\RR_+^{n\times p}}(\vY).$$

In the literature, most existing algorithms for solving \eqref{nmf} update $\vX$ and $\vY$ alternatingly; see the review paper \cite{kim2014algorithms} and the references therein.
The work \cite{ho2011descent} partitions the variables in a different way: $(\vx_1,\vy_1,\ldots,\vx_p,\vy_p)$, where $\vx_j$ denotes the $j$-th column of $\vX$, and proposes the rank-one residue iteration (RRI) method. It updates the variables cyclically, one column at a time. Specifically, RRI performs the updates cyclically from $i=1$ through $p$,
\begin{subequations}\label{alg:rri}
\begin{align}
\vx_i^{k+1} = &\argmin_{\vx_i\ge 0}\|\vx_i(\vy_i^k)^\top+\vX_{<i}^{k+1}(\vY_{<i}^{k+1})^\top+\vX_{>i}^{k}(\vY_{>i}^{k})^\top-\vM\|_F^2,\\
\vy_i^{k+1} = & \argmin_{\vy_i\ge 0}\|\vx_i^{k+1}(\vy_i)^\top+\vX_{<i}^{k+1}(\vY_{<i}^{k+1})^\top+\vX_{>i}^{k}(\vY_{>i}^{k})^\top-\vM\|_F^2,
\end{align}
\end{subequations}
where $\vX_{>i}^k=(\vx_{i+1}^k,\ldots,\vx_p^k)$. It is a cyclic block minimization method, a special case of \cite{Tseng-01}. The advantage of RRI is that each update in \eqref{alg:rri} has a closed form solution. Both updates in \eqref{alg:rri} can be written in the form of \eqref{eq:ebpg} by noting that they are equivalent to
\begin{subequations}\label{equpdate}
\begin{align}
\vx_i^{k+1}=&\argmin_{\vx_i\ge 0} \frac{1}{2}\|\vy_i^k\|^2\|\vx_i-\vx_i^k\|^2+(\vy_i^k)^\top\big(\vX_{<i}^{k+1}(\vY_{<i}^{k+1})^\top+\vX_{\ge i}^{k}(\vY_{\ge i}^{k})^\top-\vM\big)^\top \vx_i,\label{equpdate-x}\\
\vy_i^{k+1}=&\argmin_{\vy_i\ge 0}\frac{1}{2}\|\vx_i^{k+1}\|^2\|\vy_i-\vy_i^k\|^2+\vy_i^\top\big(\vX_{<i}^{k+1}(\vY_{<i}^{k+1})^\top+\vx_i^{k+1}(\vy_i^k)^\top+\vX_{>i}^{k}(\vY_{>i}^{k})^\top-\vM\big)^\top\vx_i^{k+1}.
\end{align}
\end{subequations}
Since $f(\vX,\vY)+r_1(\vX)+r_2(\vY)$ is semialgebraic and has the KL property, directly from Theorem \ref{thm:global-ebpg}, we have the following whole sequence convergence, which is stronger compared to the subsequence convergence in \cite{ho2011descent}.
\begin{theorem}[Global convergence of RRI]\label{thm:org-rri}
Let $\{(\vX^k,\vY^k)\}_{k=1}^\infty$ be the sequence generated by \eqref{alg:rri} or \eqref{equpdate} from any starting point $(\vX^0,\vY^0)$. If $\{\vx_i^k\}_{i,k}$ and $\{\vy_i^k\}_{i,k}$ are uniformly bounded and away from \emph{zero}, then $(\vX^k,\vY^k)$ converges to a critical point of \eqref{nmf}.
\end{theorem}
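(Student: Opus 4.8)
The plan is to exhibit RRI as a special case of Algorithm~\ref{alg:ebpg} and then quote Theorem~\ref{thm:global-ebpg} and Theorem~\ref{thm:subseq} verbatim. First I would fix the block structure: take $s=2p$ blocks in the cyclic order $\vx_1,\vy_1,\ldots,\vx_p,\vy_p$, set $f(\vX,\vY)=\frac12\|\vX\vY^\top-\vM\|_F^2$, and let each $r_i$ be the indicator of $\RR_+^m$ or $\RR_+^n$, which is proper, closed, and convex. The key observation is that, with the other columns frozen, $f$ is the quadratic $\frac12\|\vy_i\|^2\|\vx_i\|^2+\langle(\vX_{\neq i}\vY_{\neq i}^\top-\vM)\vy_i,\vx_i\rangle+\mathrm{const}$, whose Hessian in $\vx_i$ is exactly $\|\vy_i\|^2 I$; hence its prox-linear surrogate with stepsize $\alpha_k=1/\|\vy_i^k\|^2$ agrees with $f$ itself up to an additive constant, so the prox-linear step \eqref{eq:ebpg} with $\omega_k=0$ coincides with the exact block minimization in \eqref{alg:rri} (and symmetrically for $\vy_i$ with $\alpha_k=1/\|\vx_i^{k+1}\|^2$). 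Thus \eqref{equpdate} is Algorithm~\ref{alg:ebpg} run with no extrapolation in the block-multi-convex regime of Remark~\ref{rm:multiconv}, and since each update minimizes $F$ exactly over one block, $F(\vx^k)\le F(\vx^{k-1})$, so Condition~\ref{cond-dec} holds trivially.

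Next I would check the remaining hypotheses of Theorem~\ref{thm:global-ebpg}. Assumption~\ref{assump1}: $F\ge 0$ is lower bounded, $f$ is a polynomial hence $C^1$, each $r_i$ is proper lower semicontinuous, and a global minimizer --- hence a critical point --- exists. Assumption~\ref{assump2} is the place where the hypothesis of the theorem is actually used: boundedness and boundedness away from zero of $\{\vx_i^k\}$ and $\{\vy_i^k\}$ give constants $0<\ell\le L<\infty$ with $\ell\le\|\vy_i^k\|^2,\|\vx_i^k\|^2\le L$, hence $\ell\le L_k\le L$ for the block Lipschitz constants, while the required Lipschitz continuity of $\nabla_{\vx_i}f$ in $\vx_i$ is immediate since $f$ is quadratic in each block. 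Assumption~\ref{assump3} holds with $T=2p$ because RRI is cyclic. Condition~1 of Theorem~\ref{thm:global-ebpg} holds because the bounded sequence $\{(\vX^k,\vY^k)\}$ has a finite limit point; Condition~2 holds because $F$ is semialgebraic ($f$ polynomial, constraint sets polyhedral) and therefore satisfies the KL property (Definition~\ref{def:KL}); Condition~3 holds because $\nabla f$ is a polynomial map, hence Lipschitz on every bounded ball.

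Once all of this is in place, Theorem~\ref{thm:global-ebpg} gives $(\vX^k,\vY^k)\to(\bar\vX,\bar\vY)$ and Theorem~\ref{thm:subseq} identifies $(\bar\vX,\bar\vY)$ as a critical point of \eqref{nmf}, which is exactly the claim; I would also remark that whole-sequence convergence here strengthens the subsequence convergence obtained in \cite{ho2011descent}. The only genuinely load-bearing step --- and the one I would write out most carefully --- is the verification of Assumption~\ref{assump2}: if some column were allowed to shrink to $\vzero$, the corresponding block curvature $L_k=\|\vy_i^k\|^2$ would tend to $0$ (equivalently the stepsize $1/L_k$ to $\infty$), destroying the uniform lower bound $\ell\le L_k$ on which the objective-decrease lemma and everything downstream depend; the rest is a routine check that \eqref{nmf} fits the template \eqref{eq:main}.
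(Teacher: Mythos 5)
Your proposal is correct and follows essentially the same route as the paper: the paper likewise observes that \eqref{equpdate} casts RRI as Algorithm~\ref{alg:ebpg} with $2p$ blocks, zero extrapolation, and block Lipschitz constants $\|\vy_i^k\|^2$ and $\|\vx_i^{k+1}\|^2$ (which the boundedness hypotheses pin between $\ell$ and $L$), notes that $F$ is semialgebraic hence KL, and then invokes Theorem~\ref{thm:global-ebpg}. Your write-up simply makes explicit the hypothesis checks that the paper leaves to the reader.
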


However, during the iterations of RRI, it may happen that some columns of $\vX$ and $\vY$ become or approach to zero vector, or some of them blow up, and these cases fail the assumption of Theorem \ref{thm:org-rri}. To tackle with the difficulties, we modify the updates in \eqref{alg:rri} and improve the RRI method as follows.

Our first modification is to require each column of $\vX$ to have unit Euclidean norm; the second modification is to take the Lipschitz constant of $\nabla_{\vx_i}f(\vX_{<i}^{k+1},\vx_i,\vX_{>i}^k,\vY_{<i}^{k+1},\vY_{\ge i}^k)$ to be $L_i^k=\max(L_{\min}, \|\vy_i^k\|^2)$ for some $L_{\min}>0$; the third modification is that at the beginning of the $k$-th cycle, we shuffle the blocks to a permutation $(\pi_1^k,\ldots,\pi_p^k)$. Specifically, we perform the following updates from $i=1$ through $p$,
\begin{subequations}\label{newupdate}
\begin{align}
\vx_{\pi_i^k}^{k+1}=&\argmin_{\vx_{\pi_i^k}\ge 0,\,\|\vx_{\pi_i^k}\|=1} \frac{L_{\pi_i^k}^k}{2}\|\vx_{\pi_i^k}-\vx_{\pi_i^k}^k\|^2+(\vy_{\pi^k_i}^k)^\top\big(\vX_{\pi^k_{<i}}^{k+1}(\vY_{\pi^k_{<i}}^{k+1})^\top+\vX_{\pi^k_{\ge i}}^{k}(\vY_{\pi^k_{\ge i}}^{k})^\top-\vM\big)^\top \vx_{\pi^k_i},\label{newupdate-x}\\
\vy_{\pi^k_i}^{k+1}=&\argmin_{\vy_{\pi^k_i}\ge 0}\frac{1}{2}\|\vy_{\pi^k_i}\|^2+\vy_{\pi^k_i}^\top\big(\vX_{\pi^k_{<i}}^{k+1}(\vY_{\pi^k_{<i}}^{k+1})^\top+\vX_{\pi^k_{>i}}^{k}(\vY_{\pi^k_{>i}}^{k})^\top-\vM\big)^\top\vx_{\pi^k_i}^{k+1}.\label{newupdate-y}
\end{align}
\end{subequations}
Note that if $\pi_i^k=i$ and $L_i^k=\|\vy_i^k\|^2$, the objective in \eqref{newupdate-x} is the same as that in \eqref{equpdate-x}. Both updates in \eqref{newupdate} have closed form solutions; see Appendix \ref{app:newupdate}. Using Theorem \ref{thm:global-ebpg}, we have the following theorem, whose proof is given in Appendix \ref{app:thm-rri}. Compared to the original RRI method, the modified one automatically has bounded sequence and always has the whole sequence convergence.

\begin{theorem}[Whole sequence convergence of modified RRI]\label{thm:rri}
Let $\{(\vX^k,\vY^k)\}_{k=1}^\infty$ be the sequence  generated by \eqref{newupdate} from any starting point $(\vX^0,\vY^0)$. Then $\{\vY^k\}$ is bounded, and $(\vX^k,\vY^k)$ converges to a critical point of \eqref{nmf}.
\end{theorem}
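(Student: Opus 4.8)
## Proof proposal for Theorem \ref{thm:rri}

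The plan is to verify the hypotheses of Theorem \ref{thm:global-ebpg} for the iteration \eqref{newupdate}, which is an instance of Algorithm \ref{alg:ebpg} with $2p$ blocks $(\vx_1,\vy_1,\ldots,\vx_p,\vy_p)$, deterministic (shuffled) cyclic order, extrapolation weights $\omega_k=0$, and stepsizes $\alpha_k = 1/L_{\pi_i^k}^k$ with $L_i^k = \max(L_{\min},\|\vy_i^k\|^2)$. Since $\omega_k=0$, Condition \ref{cond-dec} holds trivially (the objective is nonincreasing by Lemma \ref{lem:dec0} with $c_2$-term vanishing), so the only substantive tasks are: (i) establish that the $\vy$-blocks, and hence the whole sequence, remain bounded; (ii) check that the block-gradient Lipschitz constants $L_k$ are bounded in a range $[\ell,L]$ as required by Assumption \ref{assump2}, and that $\nabla_{\vx_i}f$ is Lipschitz on bounded sets (Assumption \ref{assump2} and condition 3 of Theorem \ref{thm:global-ebpg}); (iii) confirm the objective $f+r_1+r_2$ is semialgebraic, hence satisfies the KL property (condition 2), which is immediate since $f$ is a polynomial and $r_1,r_2$ are indicator functions of semialgebraic sets (the nonnegative orthant, intersected in the $\vx$-blocks with the unit sphere, which is also semialgebraic). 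Given (i)--(iii), every finite limit point is handled by Theorem \ref{thm:global-ebpg} and we conclude $\vx^k\to\bar\vx$.

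First I would nail down the boundedness of $\{\vY^k\}$, which I expect to be the main obstacle since it is the one genuinely new estimate. Each column $\vx_{\pi_i^k}^{k+1}$ lies on the unit sphere by construction, so $\{\vX^k\}$ is trivially bounded; the issue is the $\vy$-update \eqref{newupdate-y}. That update is an unconstrained-in-norm proximal step: minimizing $\frac12\|\vy_{\pi_i^k}\|^2 + \vy_{\pi_i^k}^\top \vb$ over $\vy_{\pi_i^k}\ge 0$ with $\vb = \big(\vX_{\pi^k_{<i}}^{k+1}(\vY_{\pi^k_{<i}}^{k+1})^\top+\vX_{\pi^k_{>i}}^{k}(\vY_{\pi^k_{>i}}^{k})^\top-\vM\big)^\top\vx_{\pi^k_i}^{k+1}$, whose closed-form solution is $\vy_{\pi_i^k}^{k+1} = \max(\vzero, -\vb)$ (componentwise). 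Hence $\|\vy_{\pi_i^k}^{k+1}\| \le \|\vb\|$, and since $\|\vx_{\pi_i^k}^{k+1}\|=1$ this gives $\|\vy_{\pi_i^k}^{k+1}\| \le \|\vX_{\pi^k_{<i}}^{k+1}(\vY_{\pi^k_{<i}}^{k+1})^\top+\vX_{\pi^k_{>i}}^{k}(\vY_{\pi^k_{>i}}^{k})^\top-\vM\|_F$. The cleanest way to close this is to use the monotonicity of the objective: $\|\vX^k(\vY^k)^\top - \vM\|_F^2 \le \|\vX^0(\vY^0)^\top-\vM\|_F^2 =: R_0$ for all $k$, and more precisely the same bound holds for every partially-updated residual appearing inside a cycle (each elementary update only decreases the residual norm). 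Therefore $\|\vb\|\le \sqrt{R_0}$ uniformly, giving $\|\vy_i^k\|\le \sqrt{R_0}$ for all $i,k$; that is the desired bound on $\{\vY^k\}$. I would also remark that this same argument shows the full iterate sequence lies in a fixed compact set, so a finite limit point exists (condition 1).

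Second, with $\{\vY^k\}$ bounded by $\sqrt{R_0}$, the block-gradient Lipschitz constant for an $\vx$-update is $\|\vy_i^k\|^2$, but we take $L_i^k = \max(L_{\min},\|\vy_i^k\|^2)$, so $L_{\min} \le L_i^k \le \max(L_{\min}, R_0)$; for a $\vy$-update the relevant constant is $\|\vx_i^{k+1}\|^2 = 1$, and we are implicitly using stepsize $1$ there, so $\ell = \min(L_{\min},1)$ and $L = \max(L_{\min}, R_0, 1)$ work in Assumption \ref{assump2}. Condition 3 of Theorem \ref{thm:global-ebpg} — local Lipschitz continuity of each $\nabla_{\vx_i}f$ and $\nabla_{\vy_i}f$ — follows because $f$ is a (degree-four) polynomial, hence $C^\infty$ with gradients Lipschitz on any bounded set, in particular on $B_{4\rho}(\bar\vx)$. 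One technical point I would address explicitly: the $\vx$-update \eqref{newupdate-x} carries the extra constraint $\|\vx_{\pi_i^k}\|=1$, i.e. $r_1$ here is the indicator of $\{\vx\ge 0,\ \|\vx\|=1\}$ rather than of $\RR_+^{m\times p}$; this set is nonempty, closed, and semialgebraic, so Assumptions \ref{assump1} and the KL hypothesis are unaffected, and the prox-linear update is well-defined (closed-form, per Appendix \ref{app:newupdate}). Assembling (i)--(iii), Theorem \ref{thm:global-ebpg} applies verbatim and yields convergence of the whole sequence $(\vX^k,\vY^k)$ to a point which, by Theorem \ref{thm:subseq}, is a critical point of the reformulated problem, hence of \eqref{nmf} with the unit-norm column normalization (equivalent to \eqref{nmf} itself by rescaling columns of $\vX$ and $\vY$). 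The one place to be careful is checking that the closed form $\vy^{k+1}=\max(\vzero,-\vb)$ really gives the uniform bound against the \emph{monotone} residual sequence rather than against an a priori unbounded quantity; that monotonicity, inherited from Lemma \ref{lem:dec0} with $\omega_k=0$, is the crux.
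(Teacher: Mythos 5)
Your overall strategy---reduce everything to Theorem \ref{thm:global-ebpg} and observe that the only nontrivial hypothesis is the boundedness of $\{\vY^k\}$---is exactly the paper's, and your checks of the KL property, the Lipschitz constants, and the unit-sphere constraint are fine. The problem is the boundedness argument itself, which is the crux, and yours has a genuine gap. You bound $\|\vy_{\pi^k_i}^{k+1}\|\le\|\vb\|$ with $\vb=\big(\vX_{\pi^k_{<i}}^{k+1}(\vY_{\pi^k_{<i}}^{k+1})^\top+\vX_{\pi^k_{>i}}^{k}(\vY_{\pi^k_{>i}}^{k})^\top-\vM\big)^\top\vx_{\pi^k_i}^{k+1}$ and then claim $\|\vb\|\le\sqrt{R_0}$ ``by monotonicity of the residual.'' But the matrix defining $\vb$ is not a partially-updated \emph{full} residual; it is the residual with the $i$-th rank-one term \emph{deleted}. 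Writing $\vE$ for the genuine full residual at that stage (which monotonicity does control), you have $\vb=\vE^\top\vx_{\pi^k_i}^{k+1}-\vy_{\pi^k_i}^{k}$, so the honest estimate is $\|\vy_{\pi^k_i}^{k+1}\|\le\|\vy_{\pi^k_i}^{k}\|+\|\vE\|_F\le\|\vy_{\pi^k_i}^{k}\|+\sqrt{R_0}$. This recursion permits linear growth of $\|\vy_i^k\|$ in $k$ and does not close; controlling the deleted-term residual by the full one is precisely equivalent to the bound on $\|\vy_i^k\|$ you are trying to prove, so the argument is circular.

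The fix requires using nonnegativity, which your argument never invokes (and without which the claim can fail, since cancellation between rank-one terms could keep $\vX\vY^\top$ bounded while individual $\vy_j$ blow up). The paper's route: monotonicity gives that $\vE^k=\vX^k(\vY^k)^\top-\vM$ is bounded, hence $\vX^k(\vY^k)^\top=\vE^k+\vM$ is entrywise bounded by some $a$. The $i$-th column of this matrix is $\sum_{j=1}^p y_{ij}^k\vx_j^k$, a sum of entrywise \emph{nonnegative} vectors, so each summand is entrywise dominated by $a$: $y_{ij}^k(\vx_j^k)_l\le a$ for every $l$. Since $\|\vx_j^k\|=1$ forces $\|\vx_j^k\|_\infty\ge 1/\sqrt{m}$, picking the maximal entry yields $y_{ij}^k\le a\sqrt{m}$ for all $i,j,k$. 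That is the uniform bound on $\{\vY^k\}$; the rest of your proposal then goes through.
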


\textbf{Numerical tests.} We tested \eqref{equpdate} and \eqref{newupdate} on randomly generated data and also the Swimmer dataset \cite{donoho2003does}. We set $L_{\min}=0.001$ in the tests and found that \eqref{newupdate} with $\pi_i^k=i,\forall i, k$ produced the same final objective values as those by \eqref{equpdate} on both random data and the Swimmer dataset.  In addition, \eqref{newupdate} with random shuffling performed almost the same as those with $\pi_i^k=i,\forall i$ on randomly generated data. However, random shuffling significantly improved the performance of \eqref{newupdate} on the Swimmer dataset. There are 256 images of resolution $32\times 32$ in the Swimmer dataset, and each image (vectorized to one column of $\vM$) is composed of four limbs and the body. Each limb has four different positions, and all images have the body at the same position; see Figure \ref{fig:swimmer}. Hence, each of these images is a nonnegative combination of 17 images: one with the body and each one of another 16 images with one limb. We set $p=17$ in our test and ran \eqref{equpdate} and \eqref{newupdate} with/without random shuffling to 100 cycles. If the relative error ${\|\vX^{out}(\vY^{out})^\top-\vM\|_F}/{\|\vM\|_F}$ is below $10^{-3}$, we regard the factorization to be successful, where $(\vX^{out},\vY^{out})$ is the output. We ran the three different updates for 50 times independently, and for each run, they were fed with the same randomly generated starting point.  \emph{Both \eqref{equpdate} and \eqref{newupdate} without random shuffling succeed 20 times, and \eqref{newupdate} with random shuffling succeeds 41 times.} Figure \ref{fig:result_swimmer} plots all cases that occur. Every plot is in terms of running time (sec), and during that time, both methods run to 100 cycles. Since \eqref{equpdate} and \eqref{newupdate} without random shuffling give exactly the same results, we only show the results by \eqref{newupdate}. From the figure, we see that \eqref{newupdate} with fixed cyclic order and with random shuffling has similar computational complexity while the latter one can more frequently avoid bad local solutions.
\begin{figure}
\centering
\includegraphics[width=0.7\textwidth]{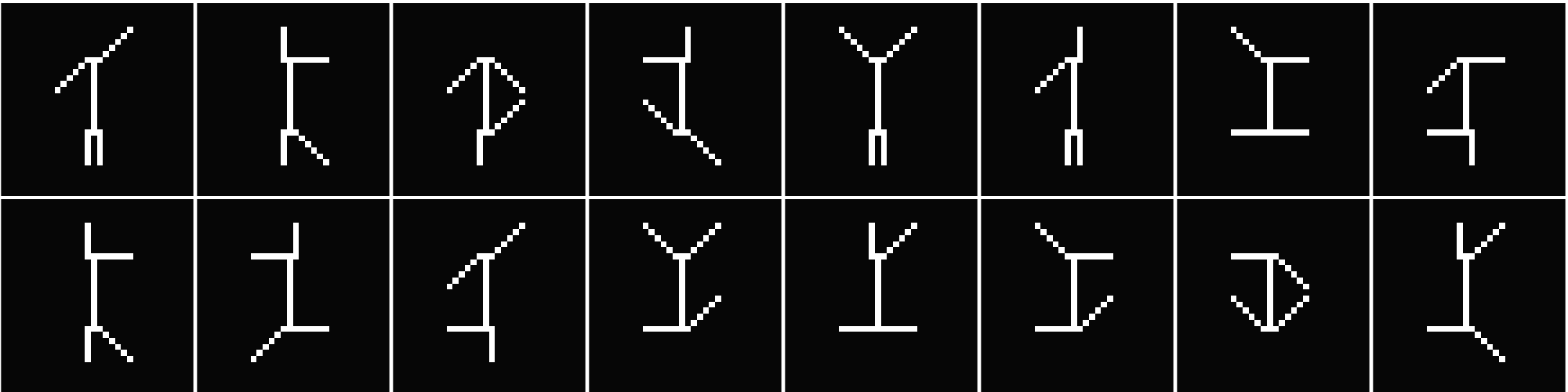}
\caption{Some images in the Swimmer Dataset}\label{fig:swimmer}
\end{figure}

\begin{figure}
\centering
{\footnotesize\begin{tabular}{cccc}
only random succeeds & both succeed & only cyclic succeeds & both fail \\
occurs 25/50 & occurs 16/50 & occurs 4/50 & occurs 5/50 \\
\includegraphics[width=0.23\textwidth]{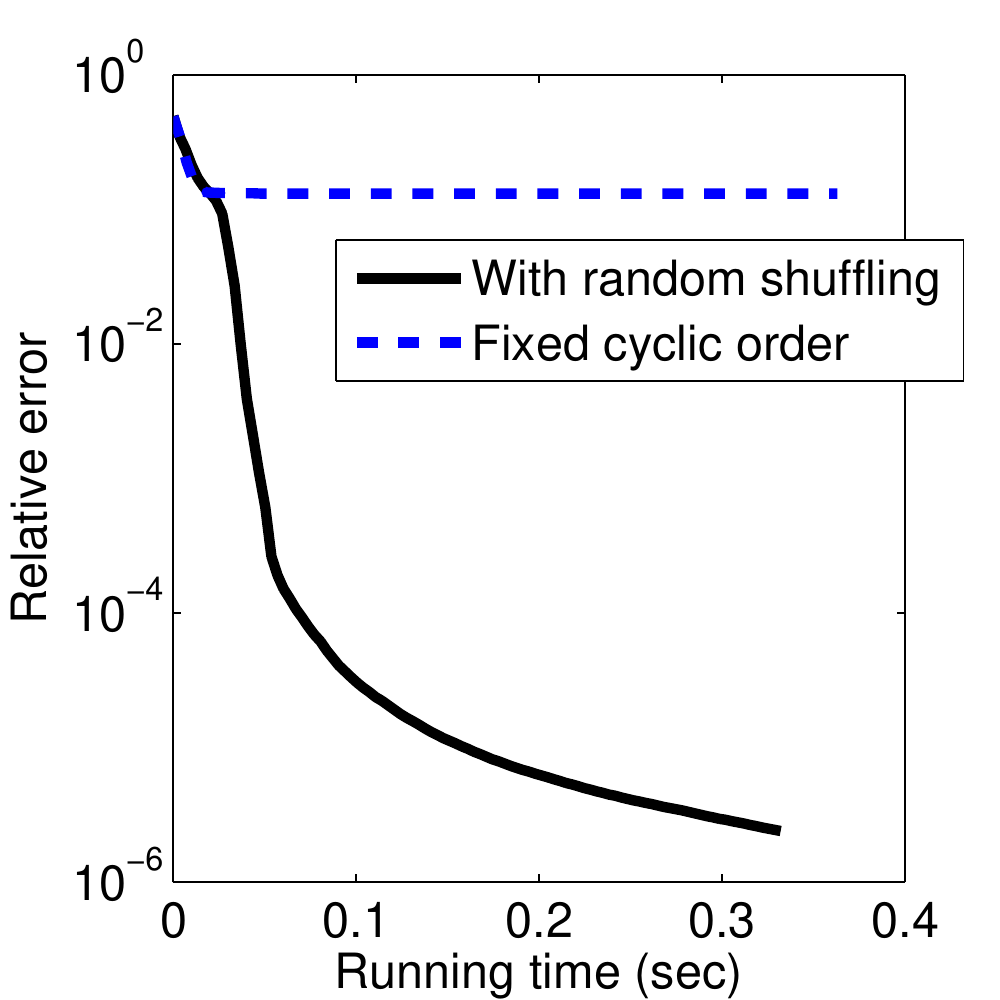}&
\includegraphics[width=0.23\textwidth]{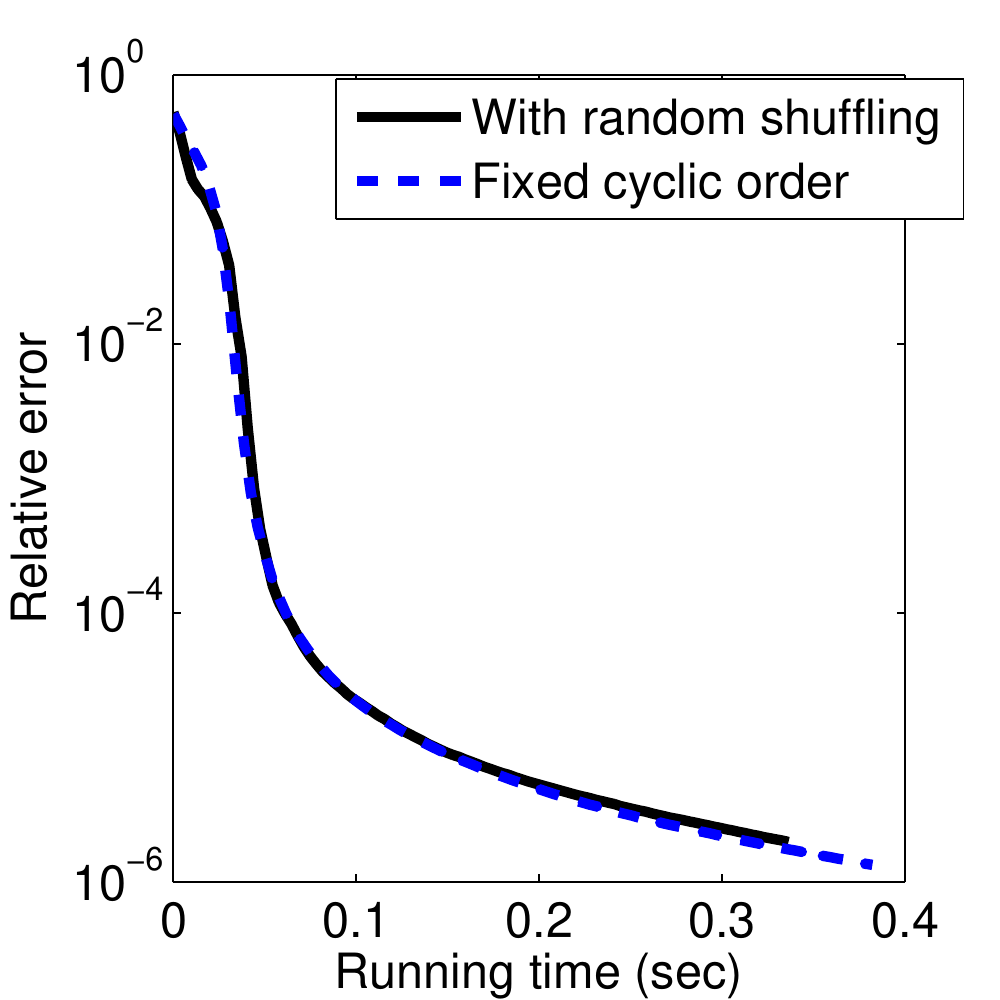}&
\includegraphics[width=0.23\textwidth]{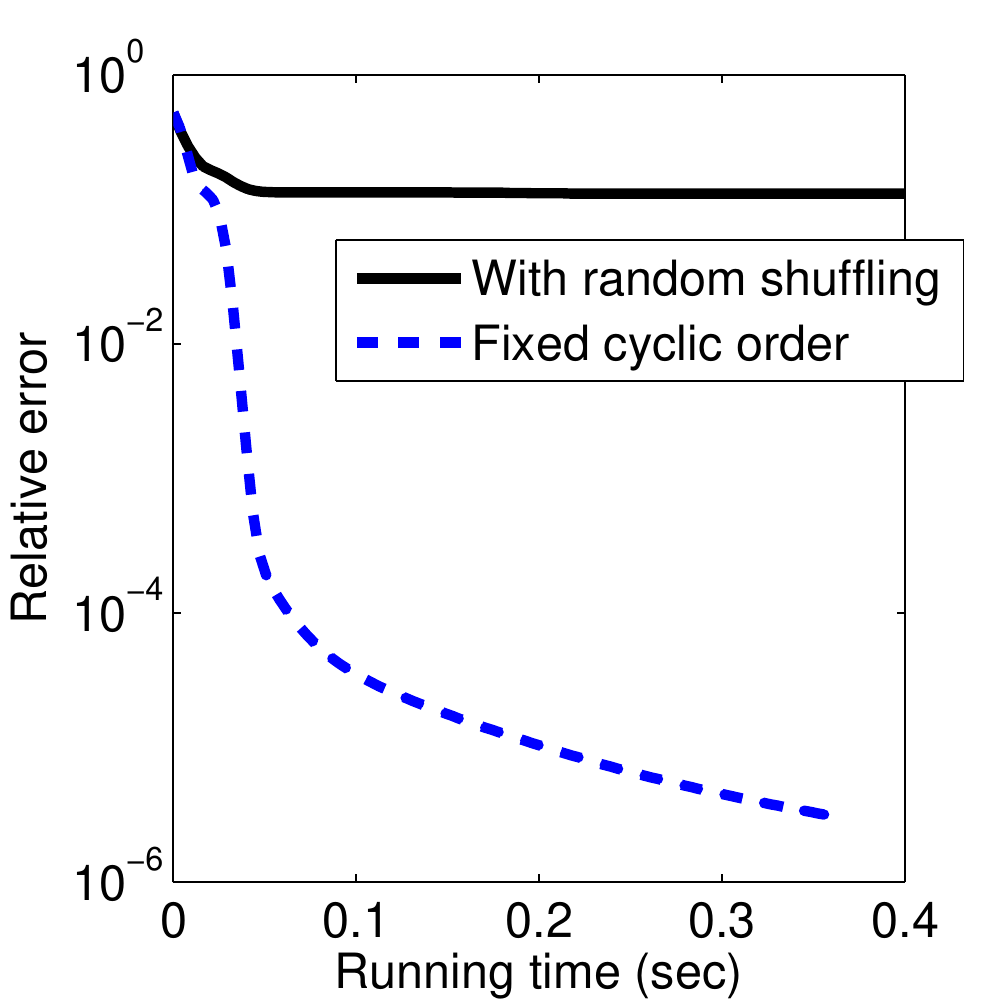}&
\includegraphics[width=0.23\textwidth]{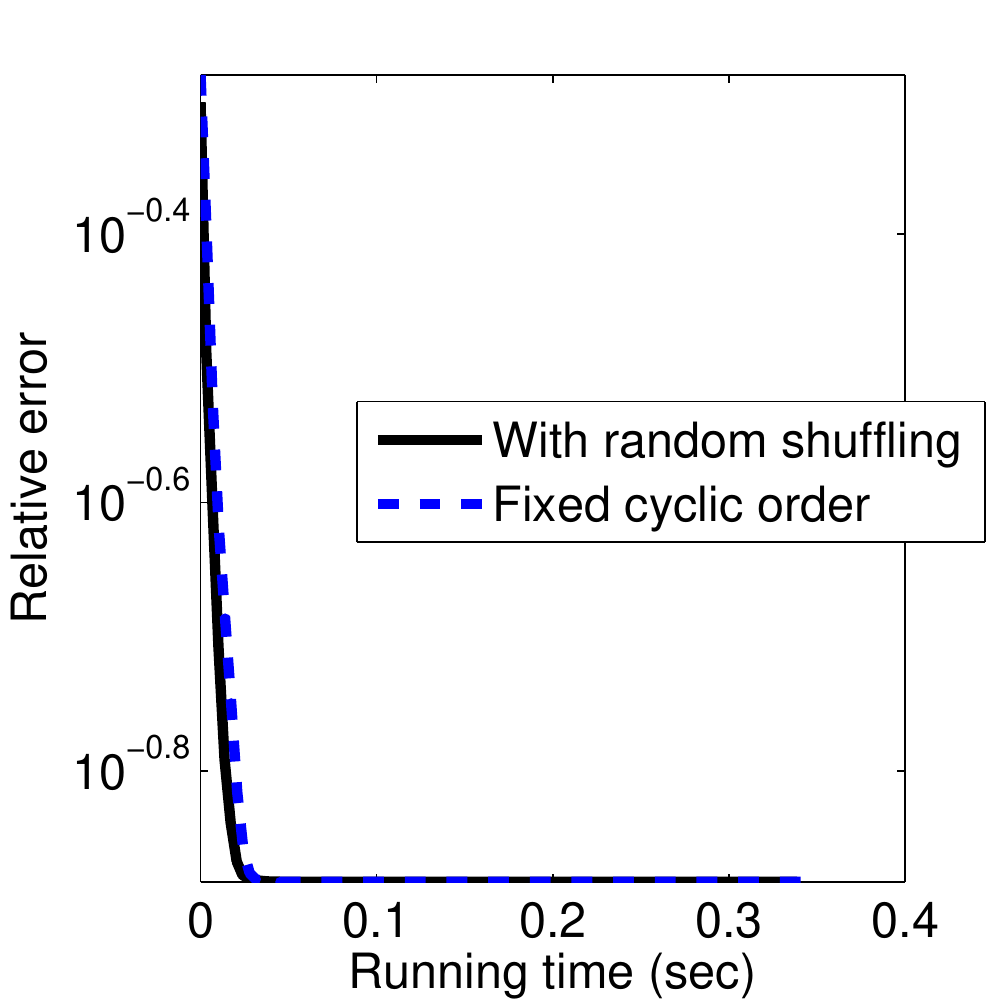}\\
\multicolumn{4}{c}{\includegraphics[width=0.45\textwidth]{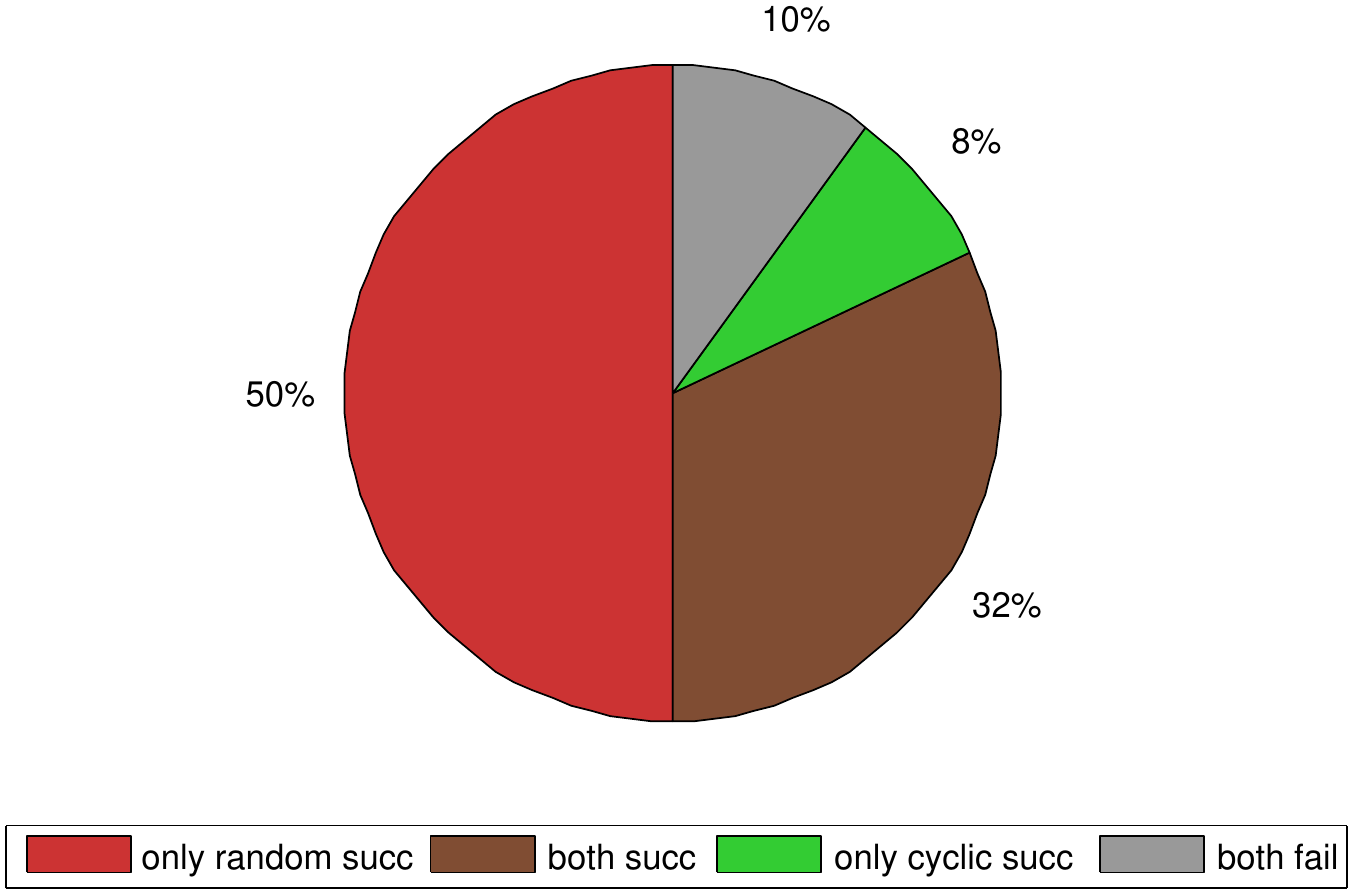}}
\end{tabular}}
\caption{All four cases of convergence behavior of the modified rank-one residue iteration \eqref{newupdate} with fixed cyclic order and with random shuffling. Both run to 100 cycles. The first plot implies both two versions fail and occurs 5 times among 50; the second plot implies both two versions succeed and occurs 16 times among 50; the third plot implies random version succeeds while the cyclic version fails and occurs 25 times among 50; the fourth plot implies cyclic version succeeds while the random version fails and occurs 4 times among 50.}\label{fig:result_swimmer}
\end{figure}

\subsection{Block prox-linear method for nonnegative Tucker decomposition} The nonnegative Tucker decomposition is to decompose a given nonnegative tensor (multi-dimensional array) into the product of a core nonnegative tensor and a few nonnegative factor matrices. It can be modeled as
\begin{equation}\label{ntd}
\min_{\bm{\cC}\ge0,\vA\ge0} \|\bm{\cC}\times_1\vA_1\ldots\times_N\vA_N-\bm{\cM}\|_F^2,
\end{equation}
where $\vA=(\vA_1,\ldots,\vA_N)$ and $\bm{\cX}\times_i\vY$ denotes tensor-matrix multiplication along the $i$-th mode (see \cite{kolda2009tensor} for example). The cyclic block proximal gradient method for solving \eqref{ntd} performs the following updates cyclically
\begin{subequations}\label{alg:bpg-ntd}
\begin{align}
\bm{\cC}^{k+1}=&\argmin_{\bm{\cC}\ge0}\langle\nabla_{\bm{\cC}}
f(\hat{\bm{\cC}}^k,\vA^k),\bm{\cC}-\hat{\bm{\cC}}^k\rangle+\frac{L_c^k}{2}\|\bm{\cC}-\hat{\bm{\cC}}^k\|_F^2,\\
\vA_i^{k+1}=&\argmin_{\vA_i\ge0}\langle\nabla_{\vA_i}f(\bm{\cC}^{k+1},\vA_{<i}^{k+1},\hat{\vA}_i^k,\vA_{>i}^k),\vA-\hat{\vA}^k\rangle+\frac{L_i^k}{2}\|\vA-\hat{\vA}^k\|_F^2,\,i=1,\ldots,N.
\end{align}
\end{subequations}
Here, $f(\bm{\cC},\vA)=\frac{1}{2}\|\bm{\cC}\times_1\vA_1\ldots\times_N\vA_N-\bm{\cM}\|_F^2$, $L_c^k$ and $L_i^k$ (chosen no less than a positive $L_{\min}$) are gradient Lipschitz constants with respect to $\bm{\cC}$ and $\vA_i$ respectively, and $\hat{\bm{\cC}}^k$ and $\hat{\vA}_i^k$ are extrapolated points:
\begin{equation}\label{ntd-ca}\hat{\bm{\cC}}^k=\bm{\cC}^k+\omega_c^k(\bm{\cC}^k-\bm{\cC}^{k-1}),\ \hat{\vA}_i^k=\vA_i^k+\omega_i^k(\vA_i^k-\vA_i^{k-1}),\, i = 1,\ldots N.
\end{equation}
with extrapolation weight set to
\begin{equation}\label{ntd-weight}\omega_c^k=\min\left(\omega_k, 0.9999\sqrt{\frac{L_c^{k-1}}{L_c^k}}\right),\ \omega_i^k=\min\left(\omega_k, 0.9999\sqrt{\frac{L_i^{k-1}}{L_i^k}}\right),\, i=1,\ldots N,
\end{equation}
where $\omega_k$ is the same as that in Algorithm \ref{alg:fista}. Our setting of extrapolated points exactly follows \cite{xu2015spntd}. Figure \ref{fig:ntd} shows that the extrapolation technique significantly accelerates the convergence speed of the method. Note that the block-prox method with no extrapolation reduces to the block coordinate gradient method in \cite{TsengYun2009}.
\begin{figure}
\centering
\includegraphics[width=0.4\textwidth]{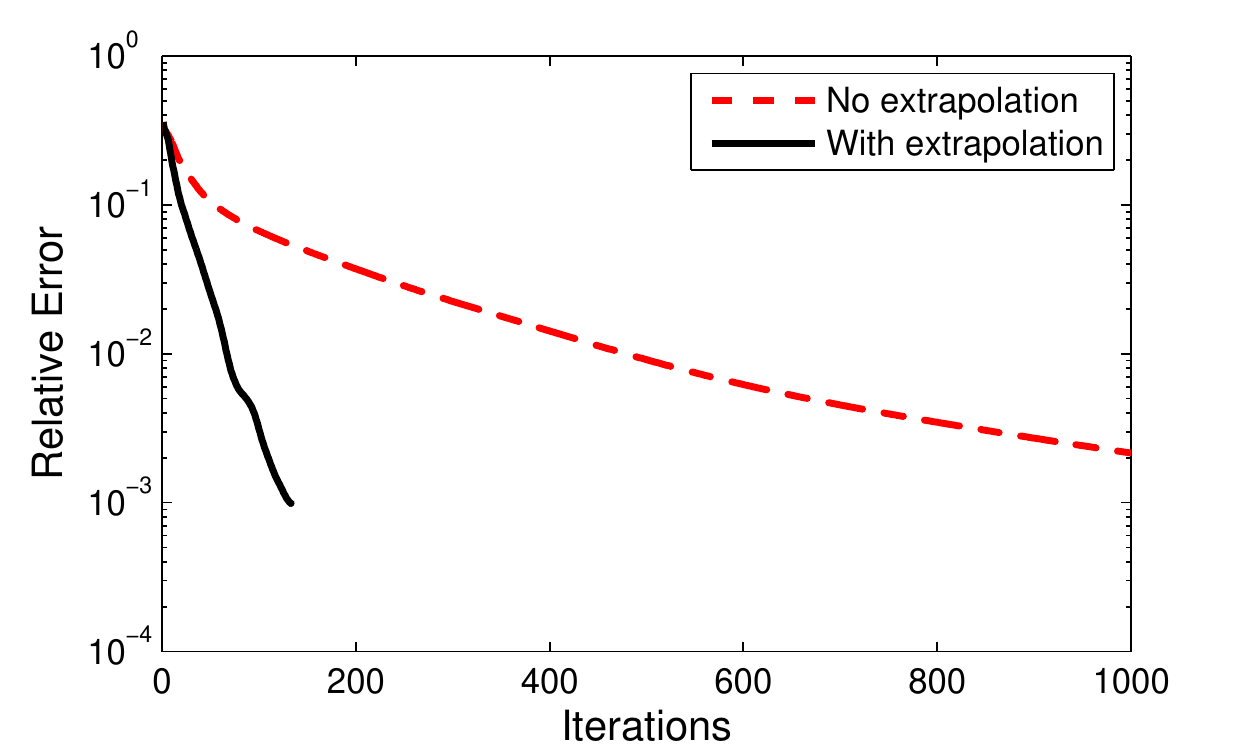}
\caption{Relative errors, defined as ${\|\bm{\cC}^k\times_1\vA_1^k\ldots\times_N\vA_N^k-\bm{\cM}\|_F}/{\|\bm{\cM}\|_F}$, given by \eqref{alg:bpg-ntd} on Gaussian randomly generated $80\times 80\times 80$ tensor with core size of $5\times 5\times 5$. No extrapolation: $\hat{\bm{\cC}}^k=\bm{\cC}^k, \hat{\vA}^k=\vA^k,\,\forall k$; With extrapolation: $\hat{\bm{\cC}}^k, \hat{\vA}^k$ set as in \eqref{ntd-ca} with extrapolation weights by \eqref{ntd-weight}.}\label{fig:ntd}
\end{figure}

Since the core tensor $\bm{\cC}$ interacts with all factor matrices, the work \cite{xu2015spntd} proposes to update $\bm{\cC}$ more frequently to improve the performance of the block proximal gradient method. Specifically, at each cycle, it performs the following updates sequentially from $i=1$ through $N$
\begin{subequations}\label{alg:ebpg-ntd}
\begin{align}
\bm{\cC}^{k+1,i}=&\argmin_{\bm{\cC}\ge0}\langle\nabla_{\bm{\cC}}
f(\hat{\bm{\cC}}^{k,i},\vA_{<i}^{k+1},\vA_{\ge i}^k),\bm{\cC}-\hat{\bm{\cC}}^{k,i}\rangle+\frac{L_c^{k,i}}{2}\|\bm{\cC}-\hat{\bm{\cC}}^{k,i}\|_F^2,\\
\vA_i^{k+1}=&\argmin_{\vA_i\ge0}\langle\nabla_{\vA_i}f(\bm{\cC}^{k+1,i},\vA_{<i}^{k+1},\hat{\vA}_i^k,\vA_{>i}^k),\vA-\hat{\vA}^k\rangle+\frac{L_i^k}{2}\|\vA-\hat{\vA}^k\|_F^2.
\end{align}
\end{subequations}
It was demonstrated that \eqref{alg:ebpg-ntd} numerically performs better than \eqref{alg:bpg-ntd}.
Numerically, we observed that the performance of \eqref{alg:ebpg-ntd} could be further improved if the blocks of variables were randomly shuffled as in \eqref{newupdate}, namely, we performed the updates sequentially from $i=1$ through $N$
\begin{subequations}\label{alg:rebpg-ntd}
\begin{align}
\bm{\cC}^{k+1,i}=&\argmin_{\bm{\cC}\ge0}\langle\nabla_{\bm{\cC}}
f(\hat{\bm{\cC}}^{k,i},\vA_{\pi^k_{<i}}^{k+1},\vA_{\pi^k_{\ge i}}^k),\bm{\cC}-\hat{\bm{\cC}}^{k,i}\rangle+\frac{L_c^{k,i}}{2}\|\bm{\cC}-\hat{\bm{\cC}}^{k,i}\|_F^2,\\
\vA_{\pi^k_i}^{k+1}=&\argmin_{\vA_{\pi^k_i}\ge0}\langle\nabla_{\vA_{\pi^k_i}}f(\bm{\cC}^{k+1,i},\vA_{\pi^k_{<i}}^{k+1},\hat{\vA}_{\pi^k_i}^k,\vA_{\pi^k_{>i}}^k),\vA-\hat{\vA}^k\rangle+\frac{L_i^k}{2}\|\vA-\hat{\vA}^k\|_F^2,
\end{align}
\end{subequations}
where $(\pi^k_1,\pi^k_2,\ldots,\pi^k_N)$ is a random permutation of  $(1,2,\ldots,N)$ at the $k$-th cycle. Note that both \eqref{alg:bpg-ntd} and \eqref{alg:rebpg-ntd} are special cases of Algorithm \ref{alg:ebpg} with $T=N+1$ and $T=2N+2$ respectively. If $\{(\bm{\cC}^k,\vA^k)\}$ is bounded, then so are $L_c^k, L_c^{k,i}$ and $L_i^k$'s. Hence, by Theorem \ref{thm:global-ebpg}, we have the convergence result as follows.

\begin{theorem}\label{thm:ntd}
The sequence $\{(\bm{\cC}^k,\vA^k)\}$ generated from \eqref{alg:bpg-ntd} or \eqref{alg:rebpg-ntd} is either unbounded or converges to a critical point of \eqref{ntd}.
\end{theorem}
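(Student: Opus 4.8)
The plan is to derive Theorem~\ref{thm:ntd} as a direct application of the whole sequence convergence result, Theorem~\ref{thm:global-ebpg}. If $\{(\bm{\cC}^k,\vA^k)\}$ is unbounded there is nothing to prove, so suppose it is bounded; then by Bolzano--Weierstrass it has a convergent subsequence, hence a finite limit point $\bar{\vx}=(\bar{\bm{\cC}},\bar{\vA})$. I would then verify that Algorithm~\ref{alg:ebpg}, specialized to \eqref{alg:bpg-ntd} or \eqref{alg:rebpg-ntd}, satisfies every hypothesis of Theorem~\ref{thm:global-ebpg} at $\bar{\vx}$, and read off $\vx^k\to\bar{\vx}$, with $\bar{\vx}$ critical by Theorem~\ref{thm:subseq}.

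First I would check Assumptions~\ref{assump1}--\ref{assump3}. For Assumption~\ref{assump1}: $f$ is a polynomial in the entries of $(\bm{\cC},\vA)$, hence continuously differentiable; each $r_i$ is the indicator of a nonnegative orthant, hence proper, lower semicontinuous (and convex); $F\ge 0$ is lower bounded; and a critical point exists---take the all-zero point $(\bm{\cC},\vA)=(\mathbf{0},\ldots,\mathbf{0})$, where every partial gradient of $f$ vanishes (each is multilinear in the frozen variables, all zero) and $\mathbf{0}$ lies in the normal cone of the nonnegative orthant at the origin, so $\mathbf{0}\in\partial F$. For Assumption~\ref{assump2}: with the other blocks frozen, $f$ restricted to the active block is a polynomial, so its gradient is Lipschitz on bounded sets; the constants $L_c^k,L_i^k$ are taken no smaller than $L_{\min}>0$ (giving $\ell=L_{\min}$), and boundedness of the iterates caps them uniformly by some $L<\infty$. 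For Assumption~\ref{assump3}: \eqref{alg:bpg-ntd} is deterministically cyclic with period $T=N+1$ and \eqref{alg:rebpg-ntd} is essentially cyclic with $T=2N+2$, as recorded after \eqref{alg:rebpg-ntd}; in both, the extrapolated points \eqref{ntd-ca} are exactly \eqref{eq:extrap} applied to the last two values of the corresponding block.

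Next I would check Condition~\ref{cond-dec} and conditions~1--3 of Theorem~\ref{thm:global-ebpg}. Since $f$ is block multi-convex and every $r_i$ is convex, Remark~\ref{rm:multiconv} applies, so $\alpha_k=1/L_k$ with the weight cap $0.9999\sqrt{L^{k-1}/L^k}$ in \eqref{ntd-weight} meets the relaxed parameter requirements. Each $\prox_{\alpha_k r_i}$ is the single-valued Euclidean projection onto a nonnegative orthant, so Proposition~\ref{prop-cvx} applies to every block; hence $\omega_k$ may be backtracked from the value in \eqref{ntd-weight}, or a step simply redone with $\omega_k=0$ whenever it would raise the objective, so that $F(\vx^k)$ is nonincreasing---Condition~\ref{cond-dec}. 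Condition~1 is the finite limit point $\bar{\vx}$ produced above. Condition~2 holds because $f$ is a polynomial and the $r_i$ are indicators of polyhedra, so $F$ is semialgebraic and has the KL property at $\bar{\vx}$ (section~\ref{sec:KL}). Condition~3 holds because each $\nabla_{\vx_i}f$ is a polynomial map, hence Lipschitz on the bounded ball $\cB_{4\rho}(\bar{\vx})$. Theorem~\ref{thm:global-ebpg} then gives $\vx^k\to\bar{\vx}$, and Theorem~\ref{thm:subseq} identifies $\bar{\vx}$ as a critical point of \eqref{ntd}.

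The step I expect to be the real obstacle is Condition~\ref{cond-dec}: the weights \eqref{ntd-weight} are FISTA-type and, with extrapolation, \eqref{eq:dec2-0} does not by itself force $F(\vx^k)\le F(\vx^{k-1})$, so the argument must explicitly route monotonicity through Proposition~\ref{prop-cvx} (backtracking, or a no-extrapolation redo). A secondary, bookkeeping matter is pinning down the essentially-cyclic constant $T$ for the shuffled scheme \eqref{alg:rebpg-ntd} across cycle boundaries. Everything else---lower boundedness, existence of a critical point, local Lipschitz continuity of $\nabla f$, and the KL inequality---is immediate from $f$ being a polynomial and the feasible set a polyhedron.
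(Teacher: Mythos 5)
Your proposal is correct and follows essentially the same route as the paper, whose entire argument is the one-paragraph observation that \eqref{alg:bpg-ntd} and \eqref{alg:rebpg-ntd} are special cases of Algorithm~\ref{alg:ebpg} with $T=N+1$ and $T=2N+2$, that boundedness of the iterates bounds the Lipschitz constants from above (with $L_{\min}$ bounding them from below), and that Theorem~\ref{thm:global-ebpg} then applies. Your write-up is in fact more thorough than the paper's: in particular, your explicit handling of Condition~\ref{cond-dec} via Proposition~\ref{prop-cvx} (backtracking on $\omega_k$, or falling back to $\omega_k=0$) addresses a hypothesis of Theorem~\ref{thm:global-ebpg} that the paper leaves implicit for this application.
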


We tested \eqref{alg:ebpg-ntd} and \eqref{alg:rebpg-ntd} on the $32\times32\times256$ Swimmer dataset used above and set the core size to $24\times 17\times 16$. We ran them to 500 cycles from the same random starting point. If the relative error $\|\bm{\cC}^{out}\times_1\vA_1^{out}\ldots\times_N\vA_N^{out}-\bm{\cM}\|_F/\|\bm{\cM}\|_F$ is below $10^{-3}$, we regard the decomposition to be successful, where $(\bm{\cC}^{out},\vA^{out})$ is the output. \emph{Among 50 independent runs, \eqref{alg:rebpg-ntd} with random shuffling succeeds 21 times while \eqref{alg:ebpg-ntd} succeeds only 11 times}. Figure \ref{fig:ntd_swimmer} plots all cases that occur. Similar to Figure \ref{fig:result_swimmer}, every plot is in terms of running time (sec), and during that time, both methods run to 500 iterations. From the figure, we see that \eqref{alg:rebpg-ntd} with fixed cyclic order and with random shuffling has similar computational complexity while the latter one can more frequently avoid bad local solutions.

\begin{figure}
\centering
{\footnotesize\begin{tabular}{cccc}
only random succeeds & both succeed & only cyclic succeeds & both fail\\
occurs 14/50 & occurs 7/50 & occurs 4/50 & occurs 25/50\\
\includegraphics[width=0.23\textwidth]{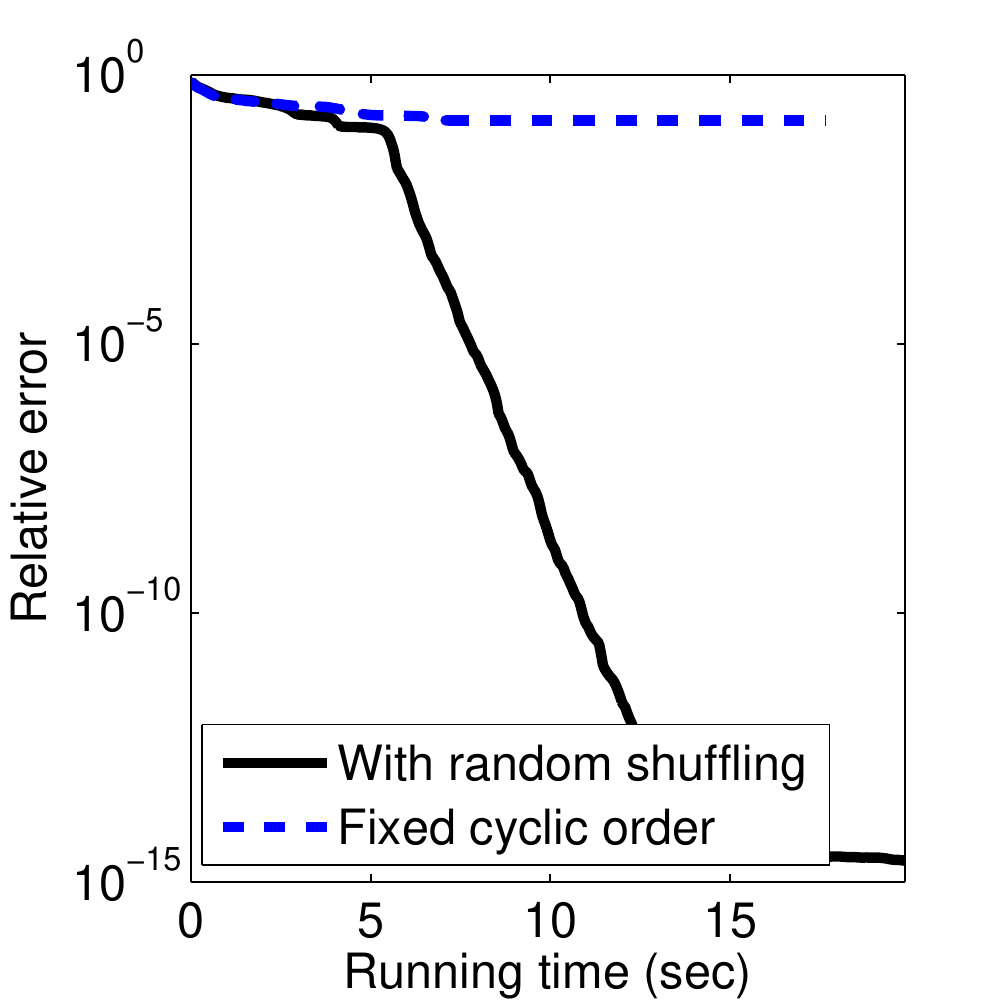}&
\includegraphics[width=0.23\textwidth]{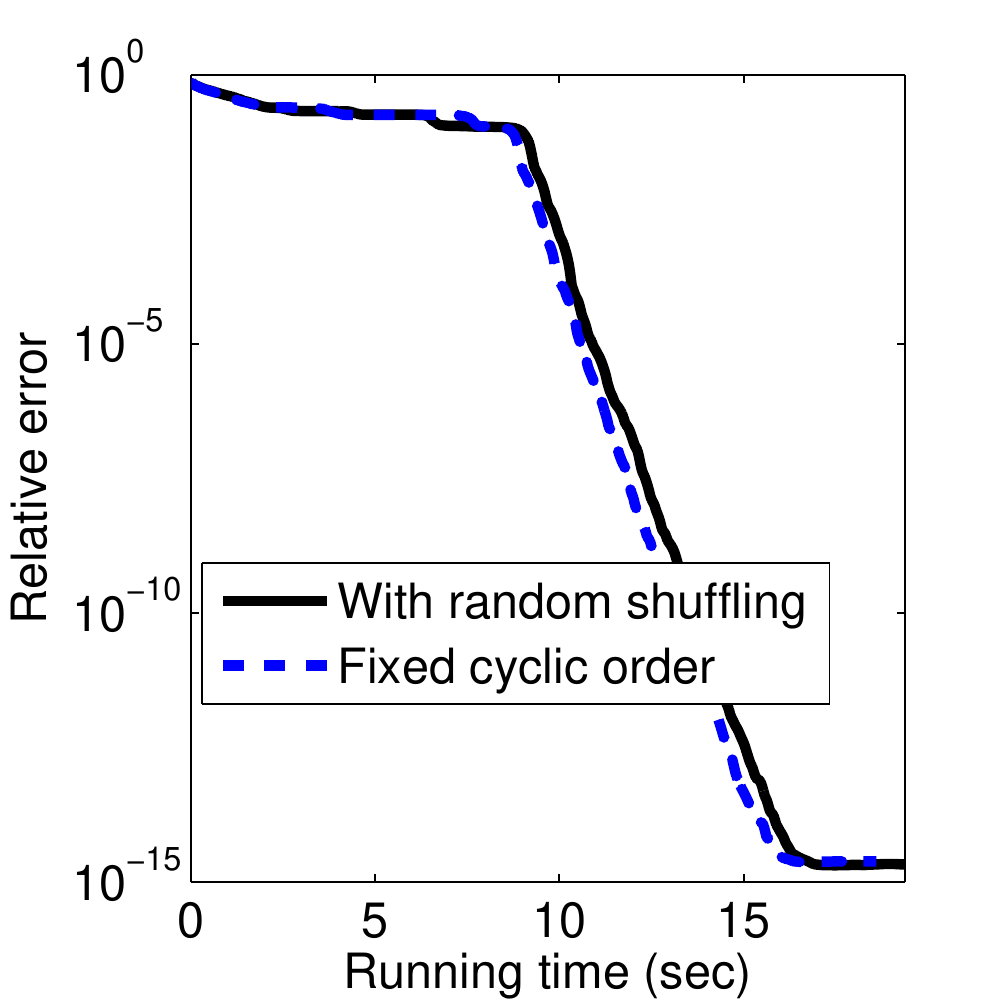}&
\includegraphics[width=0.23\textwidth]{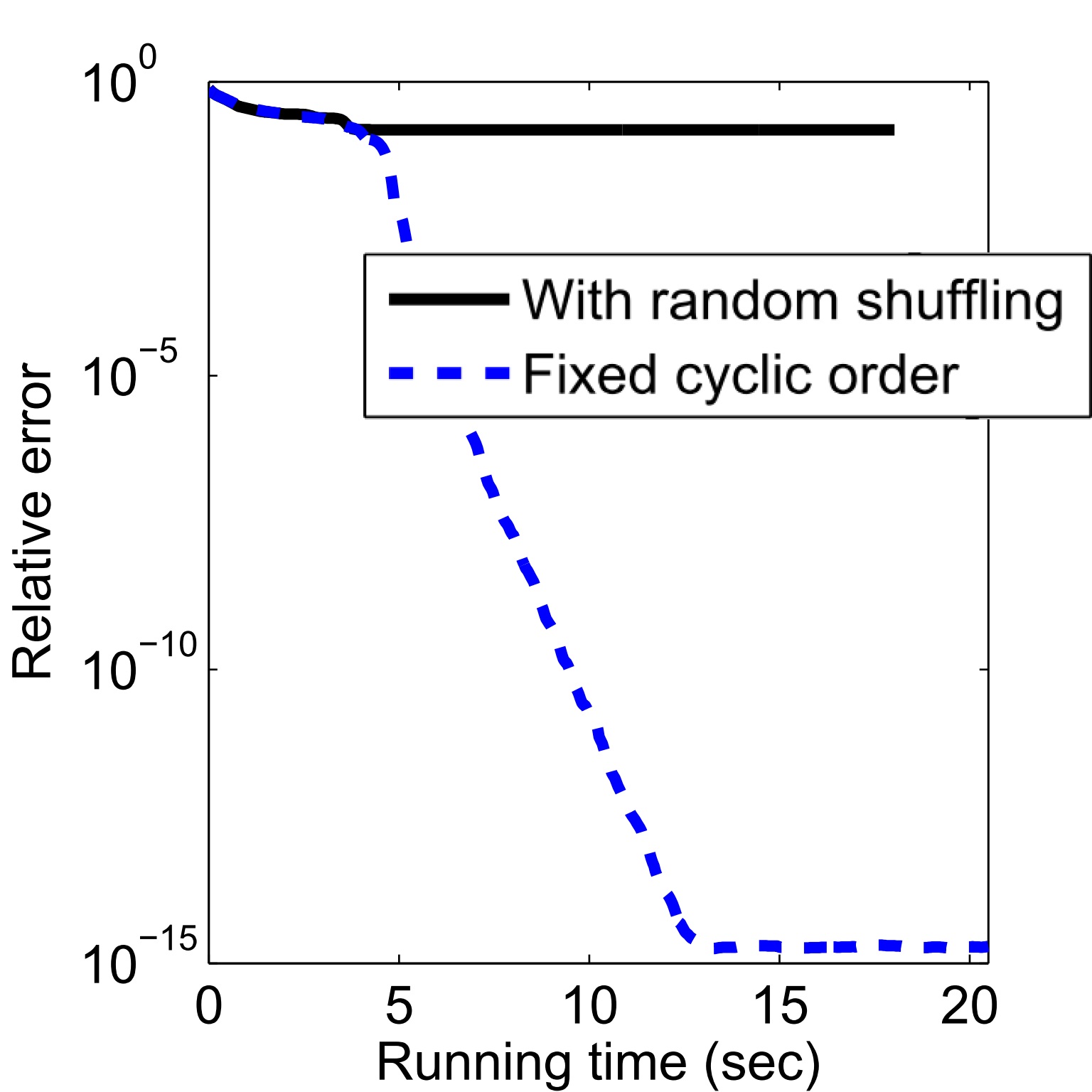}&
\includegraphics[width=0.23\textwidth]{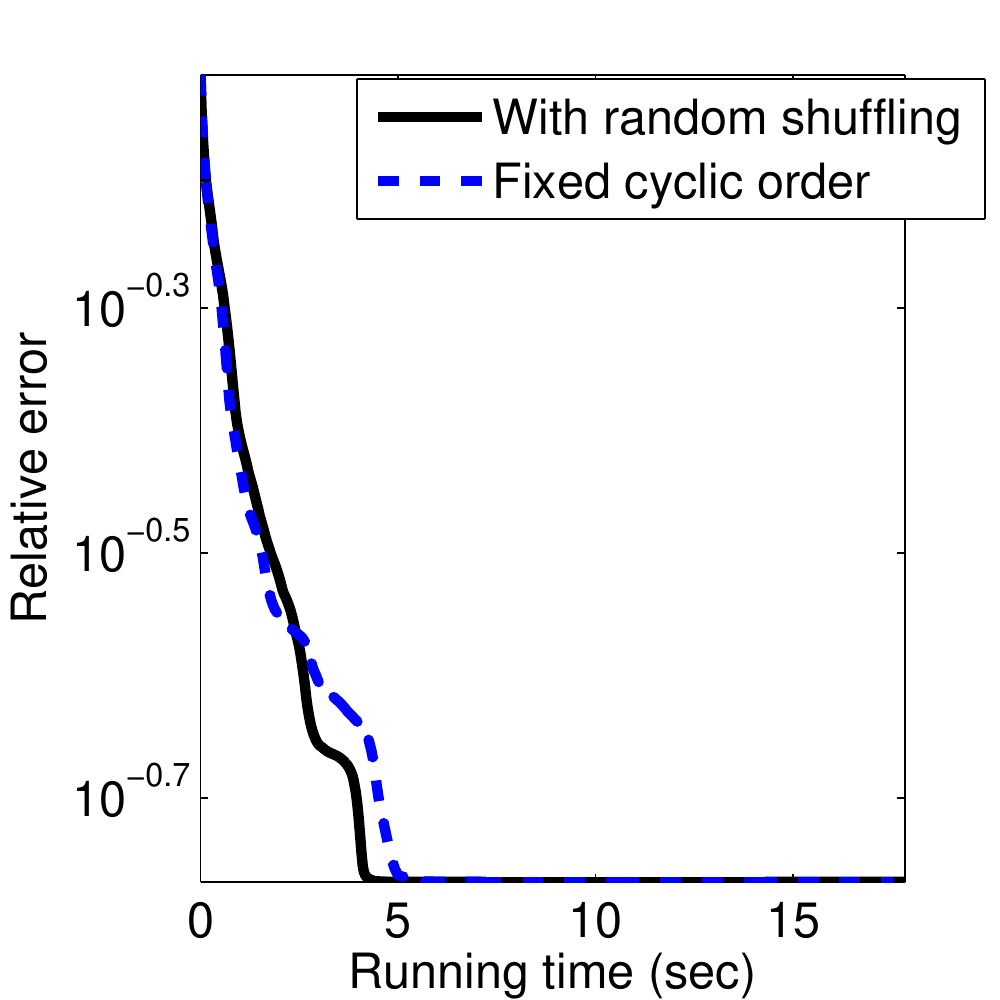}\\
\multicolumn{4}{c}{\includegraphics[width=0.45\textwidth]{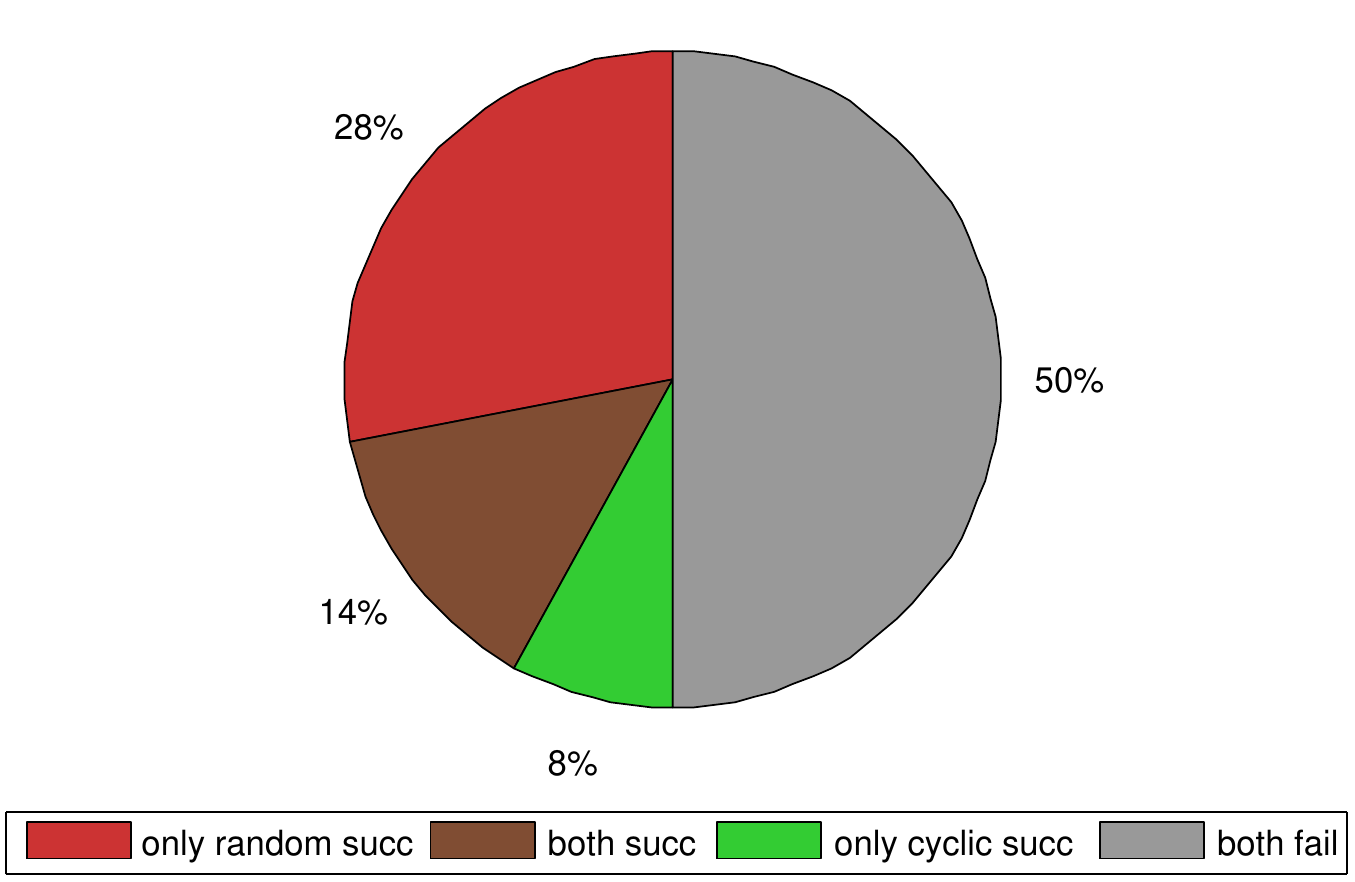}}
\end{tabular}}
\caption{All four cases of convergence behavior of the method \eqref{alg:rebpg-ntd} with fixed cyclic order and with random shuffling. Both run to 500 iterations. The first plot implies both two versions fail and occurs 25 times among 50; the second plot implies both two versions succeed and occurs 7 times among 50; the third plot implies random version succeeds while the cyclic version fails and occurs 14 times among 50; the fourth plot implies cyclic version succeeds while the random version fails and occurs 4 times among 50.}\label{fig:ntd_swimmer}
\end{figure}


\section{Conclusions}\label{sec:conclusion} We have presented a block prox-linear method, in both randomized and deterministic versions,  for solving nonconvex optimization problems. The method applies when the nonsmooth terms, if any, are  block separable. It is easy to implement and has a small memory footprint since only one block  is updated each time. 
Assuming that the differentiable parts have Lipschitz gradients, we showed that the method has a subsequence  of iterates that converges to a critical point. Further assuming the  Kurdyka-{\L}ojasiewicz property of the objective function, we showed that the entire sequence converges to a critical point and estimated its asymptotic convergence rate. Many applications have this property. In particular, we can apply our method and its convergence results to  $\ell_p$-(quasi)norm ($p\in[0,+\infty]$) regularized regression problems, matrix rank minimization, orthogonality constrained optimization, semidefinite programming, and so on. Very encouraging numerical results are presented.

\section*{Acknowledgements} The authors would like to thank three anonymous referees for their careful reviews and constructive comments.

\appendix
\section{Proofs of key lemmas}\label{app:proof-lem} In this section, we give proofs of the lemmas and also propositions we used.



\subsection{Proof of Lemma \ref{lem:dec0}} \label{app-1} We show the general case of $\alpha_k=\frac{1}{\gamma L_k},\forall k$ and $\tilde{\omega}_i^j\le\frac{\delta(\gamma-1)}{2(\gamma+1)}\sqrt{\tilde{L}_i^{j-1}/\tilde{L}_i^{j}},\,\forall i,j$.
Assume $b_k=i$. From the Lipschitz continuity of $\nabla_{\vx_i}f(\vx_{\neq i}^{k-1},\vx_i)$ about $\vx_i$, it holds that (e.g., see Lemma 2.1 in \cite{xu2013block})
\begin{equation}\label{ineq1-a}
f(\vx^{k})\le f(\vx^{k-1})+\langle\nabla_{\vx_i}f(\vx^{k-1}),\vx_i^{k}-\vx_i^{k-1}\rangle +\frac{L_k}{2}\|\vx_i^{k}-\vx_i^{k-1}\|^2.
\end{equation}
Since $\vx_i^{k}$ is the minimizer of \eqref{eq:ebpg}, then
\begin{equation}\label{ineq2-a}
\langle\nabla_{\vx_i}f(\vx_{\neq i}^{k-1},\hat{\vx}_i^k),\vx_i^{k}-\hat{\vx}_i^k\rangle +\frac{1}{2\alpha_k}\|\vx_i^{k}-\hat{\vx}_i^k\|^2+ r_i(\vx_i^{k})\le\langle\nabla_{\vx_i}f(\vx_{\neq i}^{k-1},\hat{\vx}_i^k),\vx_i^{k-1}-\hat{\vx}_i^k\rangle + \frac{1}{2\alpha_k}\|\vx_i^{k-1}-\hat{\vx}_i^k\|^2+r_i(\vx_i^{k-1}).
\end{equation} 
Summing \eqref{ineq1-a} and \eqref{ineq2-a} and noting that $\vx_j^{k+1}=\vx_j^k,\forall j\neq i$, we have
\begin{align}
&F(\vx^{k-1})-F(\vx^{k})\cr
=& f(\vx^{k-1})+r_i(\vx_i^{k-1})-f(\vx^k)-r_i(\vx_i^k)\cr
\ge & \langle\nabla_{\vx_i}f(\vx_{\neq i}^{k-1},\hat{\vx}_i^k)-\nabla_{\vx_i}f(\vx^{k-1}),\vx_i^{k}-{\vx}_i^{k-1}\rangle+\frac{1}{2\alpha_k}\|\vx_i^{k}-\hat{\vx}_i^k\|^2-\frac{1}{2\alpha_k}\|\vx_i^{k-1}-\hat{\vx}_i^k\|^2-\frac{L_k}{2}\|\vx_i^{k}-\vx_i^{k-1}\|^2\cr
= &\langle\nabla_{\vx_i}f(\vx_{\neq i}^{k-1},\hat{\vx}_i^k)-\nabla_{\vx_i}f(\vx^{k-1}),\vx_i^{k}-{\vx}_i^{k-1}\rangle+\frac{1}{\alpha_k}\langle\vx_i^{k}-\vx_i^{k-1},\vx_i^{k-1}-\hat{\vx}_i^k\rangle+(\frac{1}{2\alpha_k}-\frac{L_k}{2})\|\vx_i^{k}-\vx_i^{k-1}\|^2\cr
\ge &-\|\vx_i^{k}-\vx_i^{k-1}\|\big(\|\nabla_{\vx_i}f(\vx_{\neq i}^{k-1},\hat{\vx}_i^k)-\nabla_{\vx_i}f(\vx^{k-1})\|+\frac{1}{\alpha_k}\|\vx_i^{k-1}-\hat{\vx}_i^k\|\big)+(\frac{1}{2\alpha_k}-\frac{L_k}{2})\|\vx_i^{k}-\vx_i^{k-1}\|^2\cr
\ge &-\big(\frac{1}{\alpha_k}+L_k\big)\|\vx_i^{k}-\vx_i^{k-1}\|\cdot\|\vx_i^{k-1}-\hat{\vx}_i^k\|+(\frac{1}{2\alpha_k}-\frac{L_k}{2})\|\vx_i^{k}-\vx_i^{k-1}\|^2\cr
\overset{\eqref{eq:extrap}}= & -\big(\frac{1}{\alpha_k}+L_k\big)\omega_k\|\vx_i^{k}-\vx_i^{k-1}\|\cdot\|\vx_i^{k-1}-\tilde{\vx}_i^{d_i^{k-1}-1}\|+(\frac{1}{2\alpha_k}-\frac{L_k}{2})\|\vx_i^{k}-\vx_i^{k-1}\|^2\cr
\ge &\frac{1}{4}\big(\frac{1}{\alpha_k}-L_k\big)\|\vx_i^{k}-\vx_i^{k-1}\|^2-\frac{(1/\alpha_k+L_k)^2}{1/\alpha_k-L_k}\omega_k^2\|\vx_i^{k-1}-\tilde{\vx}_i^{d_i^{k-1}-1}\|^2\cr
=&\frac{(\gamma-1)L_k}{4}\|\vx_i^{k}-\vx_i^{k-1}\|^2-\frac{(\gamma+1)^2}{\gamma-1}L_k\omega_k^2\|\vx_i^{k-1}-\tilde{\vx}_i^{d_i^{k-1}-1}\|^2\nonumber.
\end{align}
Here, we have used Cauchy-Schwarz inequality in the second inequality, Lipschitz continuity of $\nabla_{\vx_i}f(\vx_{\neq i}^{k-1},\vx_i)$ in the third one, the Young's inequality in the fourth one, the fact $\vx_i^{k-1}=\tilde{\vx}_i^{d_i^k-1}$ to have the third equality, and $\alpha_k=\frac{1}{\gamma L_k}$ to get the last equality. Substituting $\tilde{\omega}_i^j\le\frac{\delta(\gamma-1)}{2(\gamma+1)}\sqrt{\tilde{L}_i^{j-1}/\tilde{L}_i^{j}}$ and recalling \eqref{seq} completes the proof. 

%

\subsection{Proof of the claim in Remark \ref{rm:multiconv}}\label{app-2} Assume $b_k=i$ and $\alpha_k=\frac{1}{L_k}$. When $f$ is block multi-convex and $r_i$ is convex, from Lemma 2.1 of \cite{xu2013block}, it follows that
\begin{align*}
& F(\vx^{k-1})-F(\vx^k)\\
\ge & \frac{L_k}{2}\|\vx_i^k-\hat{\vx}_i^k\|^2+L_k\langle\hat{\vx}_i^k-\vx_i^{k-1},\vx_i^k-\hat{\vx}_i^k\rangle\\
\overset{\eqref{eq:extrap}}= & \frac{L_k}{2}\|\vx_i^k-{\vx}_i^{k-1}-\omega_k({\vx}_i^{k-1}-{\vx}_i^{d_i^{k-1}-1})\|^2+L_k\omega_k\langle {\vx}_i^{k-1}-{\vx}_i^{d_i^{k-1}-1}, \vx_i^k-{\vx}_i^{k-1}-\omega_k({\vx}_i^{k-1}-{\vx}_i^{d_i^{k-1}-1})\rangle\\
=&  \frac{L_k}{2}\|\vx_i^k-{\vx}_i^{k-1}\|^2-\frac{L_k\omega_k^2}{2}\|\vx_i^{k-1}-{\vx}_i^{d_i^{k-1}-1}\|^2.
\end{align*}
Hence, if $\omega_k\le\delta\sqrt{\tilde{L}_i^{j-1}/\tilde{L}_i^j}$, we have the desired result.

\subsection{Proof of Proposition \ref{prop:sqsum}}\label{app:sqsum}
Summing \eqref{eq:diff} over $k$ from $1$ to $K$ gives
\begin{align*}
F(\bfx^0)-F(\bfx^K)\ge&\ \sum_{i=1}^s\sum_{k=1}^K\sum_{j=d_i^{k-1}+1}^{d_i^k}\left(\frac{\tilde{L}_i^j}{4}\|\tilde{\bfx}_i^{j-1}-\tilde{\bfx}_i^j\|^2-\frac{\tilde{L}_i^{j-1}\delta^2}{4}\|\tilde{\bfx}_i^{j-2}-\tilde{\bfx}_i^{j-1}\|^2\right)\\
=&\ \sum_{i=1}^s\sum_{j=1}^{d_i^{K}}\left(\frac{\tilde{L}_i^j}{4}\|\tilde{\bfx}_i^{j-1}-\tilde{\bfx}_i^j\|^2-\frac{\tilde{L}_i^{j-1}\delta^2}{4}\|\tilde{\bfx}_i^{j-2}-\tilde{\bfx}_i^{j-1}\|^2\right)\\
\ge &\ \sum_{i=1}^s\sum_{j=1}^{d_i^K}\frac{\tilde{L}_i^j(1-\delta^2)}{4}\|\tilde{\bfx}_i^{j-1}-\tilde{\bfx}_i^j\|^2\\
\ge &\ \sum_{i=1}^s\sum_{j=1}^{d_i^K}\frac{\ell(1-\delta^2)}{4}\|\tilde{\bfx}_i^{j-1}-\tilde{\bfx}_i^j\|^2,
\end{align*}
where we have used the fact $d_i^0=0,\forall i$ in the first equality, $\tilde{\bfx}_i^{-1}=\tilde{\bfx}_i^0,\forall i$ to have the second inequality, and $\tilde{L}_i^j\ge\ell, \forall i,j$ in the last inequality. Letting $K\to\infty$ and noting $d_i^K\to\infty$ for all $i$ by Assumption \ref{assump3}, we conclude from the above inequality and the lower boundedness of $F$ in Assumption \ref{assump1} that
$$\sum_{i=1}^s\sum_{j=1}^\infty\|\tilde{\vx}_i^{j-1}-\tilde{\vx}_i^j\|^2<\infty,$$
which implies \eqref{eq:sqsum}.

\subsection{Proof of Proposition \ref{prop-cvx}}\label{app:prop-cvx}
From Corollary 5.20 and Example 5.23 of \cite{rockafellar2009variational}, we have that if $\prox_{\alpha_kr_i}$ is single valued near $\vx_i^{k-1}-\alpha_k\nabla_{\vx_i}f(\vx^{k-1})$, then $\prox_{\alpha_kr_i}$ is continuous at $\vx_i^{k-1}-\alpha_k\nabla_{\vx_i}f(\vx^{k-1})$. Let $\hat{\vx}^k_i(\omega)$ explicitly denote the extrapolated point with weight $\omega$, namely, we take $\hat{\vx}^k_i(\omega_k)$ in \eqref{eq:extrap}. In addition, let $\vx^k_i(\omega)=\prox_{\alpha_kr_i}\big(\hat{\vx}_i^k(\omega)-\alpha_k\nabla_{\vx_i}f(\bfx_{\neq i}^{k-1},\hat{\bfx}_i^k(\omega))\big)$. Note that \eqref{eq:diff} implies 
\begin{equation}\label{pos-dec}F(\vx^{k-1})-F(\vx^k(0))\ge\|\vx^{k-1}-\vx^k(0)\|^2\overset{\eqref{not-opt}}>0.
\end{equation} 
From the optimality of $\vx^k_i(\omega)$, it holds that
\begin{align*}
&\langle\nabla_{\vx_i} f(\bfx_{\neq i}^{k-1},\hat{\bfx}_i^k(\omega)), \bfx_i^k(\omega)-\hat{\bfx}_i^k(\omega)\rangle+\frac{1}{2\alpha_k}\|\bfx_i^k(\omega)-\hat{\bfx}_i^k(\omega)\|^2+r_i(\bfx_i^k(\omega))\\
\le & \langle\nabla_{\vx_i} f(\bfx_{\neq i}^{k-1},\hat{\bfx}_i^k(\omega)), {\bfx}_i-\hat{\bfx}_i^k(\omega)\rangle+\frac{1}{2\alpha_k}\|{\bfx}_i-\hat{\bfx}_i^k(\omega)\|^2+r_i({\bfx}_i),\,\forall \vx_i.
\end{align*}
Taking limit superior on both sides of the above inequality, we have
\begin{align*}
&\langle\nabla_{\vx_i} f(\bfx^{k-1}), \bfx_i^k(0)-{\bfx}_i^{k-1}\rangle+\frac{1}{2\alpha_k}\|\bfx_i^k(0)-{\bfx}_i^{k-1}\|^2+\limsup_{\omega\to 0^+}r_i(\bfx_i^k(\omega))\\
\le & \langle\nabla_{\vx_i} f(\bfx^{k-1}), {\bfx}_i-{\bfx}_i^{k-1}\rangle+\frac{1}{2\alpha_k}\|{\bfx}_i-{\bfx}_i^{k-1}\|^2+r_i({\bfx}_i),\,\forall \vx_i,
\end{align*}
which implies $\underset{\omega\to 0^+}\limsup\, r_i(\bfx_i^k(\omega))\le r_i(\bfx_i^k(0))$. Since $r_i$ is lower semicontinuous, $\underset{\omega\to 0^+}\liminf\, r_i(\bfx_i^k(\omega))\ge r_i(\bfx_i^k(0))$. Hence, $\underset{\omega\to 0^+}\lim r_i(\bfx_i^k(\omega))= r_i(\bfx_i^k(0))$, and thus $\underset{\omega\to 0^+}\lim F(\bfx^k(\omega))= F(\bfx^k(0))$. Together with \eqref{pos-dec}, we conclude that there exists $\bar{\omega}_k>0$ such that $F(\vx^{k-1})-F(\bfx^k(\omega))\ge 0,\,\forall \omega\in[0,\bar{\omega}_k]$. This completes the proof.

\subsection{Proof of Lemma \ref{lem:seq}}\label{app:lem-seq}
Let $\va_m$ and $\vu_m$ be the vectors with their $i$-th entries
$$(\va_m)_i=\sqrt{\alpha_{i,n_{i,m}}},\quad (\vu_m)_i=A_{i,n_{i,m}}.$$
Then \eqref{cond-seq} can be written as
\begin{equation}\label{cond1}\|\va_{m+1}\odot\vu_{m+1}\|^2+(1-\beta^2)\sum_{i=1}^s\sum_{j=n_{i,m}+1}^{n_{i,m+1}-1}\alpha_{i,j}A_{i,j}^2
\le\beta^2\|\va_{m}\odot\vu_{m}\|^2+B_m\sum_{i=1}^s\sum_{j=n_{i,m-1}+1}^{n_{i,m}}A_{i,j}.
\end{equation}
 Recall
$$\underline{\alpha}=\inf_{i,j}\alpha_{i,j},\quad \overline{\alpha}=\sup_{i,j}\alpha_{i,j}.$$
Then it follows from \eqref{cond1} that
\begin{equation}\label{cond2}\|\va_{m+1}\odot\vu_{m+1}\|^2+\underline{\alpha}(1-\beta^2)\sum_{i=1}^s\sum_{j=n_{i,m}+1}^{n_{i,m+1}-1}A_{i,j}^2
\le\beta^2\|\va_{m}\odot\vu_{m}\|^2+B_m\sum_{i=1}^s\sum_{j=n_{i,m-1}+1}^{n_{i,m}}A_{i,j}.
\end{equation}
By the Cauchy-Schwarz inequality and noting $n_{i,m+1}-n_{i,m}\le N,\forall i,m$, we have
\begin{equation}\label{ineq-sq1}\left(\sum_{i=1}^s\sum_{j=n_{i,m}+1}^{n_{i,m+1}-1}A_{i,j}\right)^2\le sN\sum_{i=1}^s\sum_{j=n_{i,m}+1}^{n_{i,m+1}-1}A_{i,j}^2
\end{equation}
and for any positive $C_1$,
\begin{align}\label{ineq-sq2}
&(1+\beta)C_1\|\va_{m+1}\odot\vu_{m+1}\|\left(\sum_{i=1}^s\sum_{j=n_{i,m}+1}^{n_{i,m+1}-1}A_{i,j}\right)\cr
\le & \sum_{i=1}^s\sum_{j=n_{i,m}+1}^{n_{i,m+1}-1}\left(\frac{4-(1+\beta)^2}{4sN}\|\va_{m+1}\odot\vu_{m+1}\|^2+\frac{(1+\beta)^2C_1^2sN}{4-(1+\beta)^2}A_{i,j}^2\right)\cr
\le & \frac{4-(1+\beta)^2}{4}\|\va_{m+1}\odot\vu_{m+1}\|^2 + \frac{(1+\beta)^2C_1^2sN}{4-(1+\beta)^2}\sum_{i=1}^s\sum_{j=n_{i,m}+1}^{n_{i,m+1}-1}A_{i,j}^2.
\end{align}
Taking
\begin{equation}\label{eq-C1}C_1 \le\sqrt{\frac{\underline{\alpha}(1-\beta^2)(4-(1+\beta)^2)}{4sN}},
\end{equation} we have from \eqref{ineq-sq1} and \eqref{ineq-sq2} that
\begin{equation}\label{cond3}\frac{1+\beta}{2}\|\va_{m+1}\odot\vu_{m+1}\|+C_1\sum_{i=1}^s\sum_{j=n_{i,m}+1}^{n_{i,m+1}-1}A_{i,j}\le \sqrt{\|\va_{m+1}\odot\vu_{m+1}\|^2+\underline{\alpha}(1-\beta^2)\sum_{i=1}^s\sum_{j=n_{i,m}+1}^{n_{i,m+1}-1}A_{i,j}^2}.
\end{equation}
For any $C_2>0$, it holds
\begin{align}\label{cond4}
&\sqrt{\beta^2\|\va_{m}\odot\vu_{m}\|^2+B_m\sum_{i=1}^s\sum_{j=n_{i,m-1}+1}^{n_{i,m}}A_{i,j}}\cr
\le & \beta\|\va_{m}\odot\vu_{m}\| +\sqrt{B_m\sum_{i=1}^s\sum_{j=n_{i,m-1}+1}^{n_{i,m}}A_{i,j}}\cr
\le & \beta\|\va_{m}\odot\vu_{m}\|+C_2B_m+\frac{1}{4C_2}\sum_{i=1}^s\sum_{j=n_{i,m-1}+1}^{n_{i,m}}A_{i,j}\cr
\le & \beta\|\va_{m}\odot\vu_{m}\|+C_2B_m+\frac{1}{4C_2}\sum_{i=1}^s\sum_{j=n_{i,m-1}+1}^{n_{i,m}-1}A_{i,j}+\frac{\sqrt{s}}{4C_2}\|\vu_m\|.
\end{align}
Combining \eqref{cond2}, \eqref{cond3}, and \eqref{cond4}, we have
\begin{equation*}
\frac{1+\beta}{2}\|\va_{m+1}\odot\vu_{m+1}\|+C_1\sum_{i=1}^s\sum_{j=n_{i,m}+1}^{n_{i,m+1}-1}A_{i,j}\le \beta\|\va_{m}\odot\vu_{m}\|+C_2B_m+\frac{1}{4C_2}\sum_{i=1}^s\sum_{j=n_{i,m-1}+1}^{n_{i,m}-1}A_{i,j}+\frac{\sqrt{s}}{4C_2}\|\vu_m\|.
\end{equation*}
Summing the above inequality over $m$ from $M_1$ through $M_2\le M$ and arranging terms gives
\begin{align}\label{app-key1}
&\sum_{m=M_1}^{M_2}\left(\frac{1-\beta}{2}\|\va_{m+1}\odot\vu_{m+1}\|
-\frac{\sqrt{s}}{4C_2}\|\vu_{m+1}\|\right)+\big(C_1-\frac{1}{4C_2}\big)\sum_{m=M_1}^{M_2}\sum_{i=1}^s\sum_{j=n_{i,m}+1}^{n_{i,m+1}-1}A_{i,j}\cr
\le & \beta \|\va_{M_1}\odot\vu_{M_1}\|+C_2\sum_{m=M_1}^{M_2} B_m
+\frac{1}{4C_2}\sum_{i=1}^s\sum_{j=n_{i,M_1-1}+1}^{n_{i,M_1}-1}A_{i,j}+\frac{\sqrt{s}}{4C_2}\|\vu_{M_1}\|
\end{align}
Take 
\begin{equation}\label{eq-C2}
C_2=\max\left(\frac{1}{2C_1},\ \frac{\sqrt{s}}{\sqrt{\underline{\alpha}}(1-\beta)}\right).
\end{equation}
Then \eqref{app-key1} implies
\begin{align}\label{app-key2}
&\frac{\sqrt{\underline{\alpha}}(1-\beta)}{4}\sum_{m=M_1}^{M_2}\|\vu_{m+1}\|+\frac{C_1}{2}\sum_{m=M_1}^{M_2}\sum_{i=1}^s\sum_{j=n_{i,m}+1}^{n_{i,m+1}-1}A_{i,j}\cr
\le & \beta\sqrt{\overline{\alpha}}\|\vu_{M_1}\|+C_2\sum_{m=M_1}^{M_2} B_m+\frac{1}{4C_2}\sum_{i=1}^s\sum_{j=n_{i,M_1-1}+1}^{n_{i,M_1}-1}A_{i,j}+\frac{\sqrt{s}}{4C_2}\|\vu_{M_1}\|,
\end{align}
which together with 
$\sum_{i=1}^sA_{i,n_{i,m+1}}\le\sqrt{s}\|\vu_{m+1}\|$
gives
\begin{align}\label{cond5}
C_3\sum_{i=1}^s\sum_{j=n_{i,M_1}+1}^{n_{i,M_2+1}}A_{i,j}=&
C_3\sum_{m=M_1}^{M_2}\sum_{i=1}^s\sum_{j=n_{i,m}+1}^{n_{i,m+1}}A_{i,j}\cr
\le &\beta\sqrt{\overline{\alpha}}\|\vu_{M_1}\|+C_2\sum_{m=M_1}^{M_2} B_m+\frac{1}{4C_2}\sum_{i=1}^s\sum_{j=n_{i,M_1-1}+1}^{n_{i,M_1}-1}A_{i,j}+\frac{\sqrt{s}}{4C_2}\|\vu_{M_1}\|,\nonumber\\
\le & C_2\sum_{m=1}^{M_2} B_m+C_4\sum_{i=1}^s\sum_{j=n_{i,M_1-1}+1}^{n_{i,M_1}}A_{i,j},
\end{align}
where we have used $\|\vu_{M_1}\|\le \sum_{i=1}^sA_{i,n_{i,M_1}}$, and \begin{equation}\label{eq-C34}
C_3=\min\left(\frac{\sqrt{\underline{\alpha}}(1-\beta)}{4\sqrt{s}},\frac{C_1}{2}\right),\quad C_4=\beta\sqrt{\overline{\alpha}}+\frac{\sqrt{s}}{4C_2}.
\end{equation}
From \eqref{eq-C1}, \eqref{eq-C2}, and \eqref{eq-C34}, we can take
$$C_1=\frac{\sqrt{\underline{\alpha}}(1-\beta)}{2\sqrt{sN}}\le \min\left\{\sqrt{\frac{\underline{\alpha}(1-\beta^2)(4-(1+\beta)^2)}{4sN}},\ \frac{\sqrt{\underline{\alpha}}(1-\beta)}{2\sqrt{s}}\right\},
$$
where the inequality can be verified by noting $(1-\beta^2)(4-(1+\beta)^2)-(1-\beta)^2$ is decreasing with respect to $\beta$ in $[0,1]$. Thus from \eqref{eq-C2} and \eqref{eq-C34}, we have $C_2=\frac{1}{2C_1},\, C_3=\frac{C_1}{2},\, C_4=\beta\sqrt{\overline{\alpha}}+\frac{\sqrt{s}C_1}{2}$. 
Hence, from \eqref{cond5}, we complete the proof of \eqref{cond-seq0}.

If $\lim_{m\to\infty}n_{i,m}=\infty,\forall i$, $\sum_{m=1}^\infty B_m<\infty$, and \eqref{cond-seq} holds for all $m$, letting $M_1=1$ and $M_2\to\infty$, we have \eqref{cau-seq} from \eqref{cond5}.

\subsection{Proof of Proposition \ref{prop:asymconvg}}\label{app:asymconvg}
For any $i$, assume that while updating the $i$-th block to $\vx_i^k$, the value of the $j$-th block ($j\neq i$) is $\vy_j^{(i)}$, the extrapolated point of the $i$-th block is $\vz_i$, and the Lipschitz constant of $\nabla_{\vx_i}f(\vy_{\neq i}^{(i)},\vx_i)$ with respect to $\vx_i$ is $\tilde{L}_i$, namely,
$$\vx_i^k\in\argmin_{\vx_i}\langle \nabla_{\vx_i}f(\vy_{\neq i}^{(i)},\vz_i),\vx_i-\vz_i\rangle+\tilde{L}_i\|\vx_i-\vz_i\|^2+r_i(\vx_i).$$
Hence, $\vzero\in \nabla_{\vx_i}f(\vy_{\neq i}^{(i)},\vz_i)+2\tilde{L}_i(\vx_i^k-\vz_i)+\partial r_i(\vx_i^k),$
or equivalently,
\begin{equation}\label{firstorder}\nabla_{\vx_i}f(\vx^k)-\nabla_{\vx_i}f(\vy_{\neq i}^{(i)},\vz_i)-2\tilde{L}_i(\vx_i^k-\vz_i)\in\nabla_{\vx_i}f(\vx^k)+\partial r_i(\vx_i^k),\,\forall i.
\end{equation}

Note that $\vx_i$ may be updated to $\vx_i^k$ not at the $k$-th iteration but at some earlier one, which must be between $k-T$ and $k$ by Assumption \ref{assump3}. In addition, for each pair $(i,j)$, there must be some $\kappa_{i,j}$ between $k-2T$ and $k$ such that
\begin{equation}\label{yapp}
\vy_j^{(i)}=\vx_j^{\kappa_{i,j}},
\end{equation} 
and for each $i$, there are $k-3T\le\kappa_1^i<\kappa_2^i\le k$ and extrapolation weight $\tilde{\omega}_i\le 1$ such that
\begin{equation}\label{zapp}
\vz_i=\vx_i^{\kappa_2^i}+\tilde{\omega}_i(\vx_i^{\kappa_2^i}-
\vx_i^{\kappa_1^i}).
\end{equation}  
By triangle inequality, $(\vy_{\neq i}^{(i)},\vz_i)\in B_{4\rho}(\bar{\vx})$ for all $i$.
Therefore, it follows from \eqref{eq:cart-prod} and \eqref{firstorder} that
\begin{align}\label{cond6}\dist(\vzero,\partial F(\vx^k))\overset{\eqref{firstorder}}\le&\sqrt{\sum_{i=1}^s\|\nabla_{\vx_i}f(\vx^k)-\nabla_{\vx_i}f(\vy_{\neq i}^{(i)},\vz_i)-2\tilde{L}_i(\vx_i^k-\vz_i)\|^2}\cr
\le & \sum_{i=1}^s\|\nabla_{\vx_i}f(\vx^k)-\nabla_{\vx_i}f(\vy_{\neq i}^{(i)},\vz_i)-2\tilde{L}_i(\vx_i^k-\vz_i)\|\cr
\le &\sum_{i=1}^s\left(\|\nabla_{\vx_i}f(\vx^k)-\nabla_{\vx_i}f(\vy_{\neq i}^{(i)},\vz_i)\|+2\tilde{L}_i\|\vx_i^k-\vz_i\|\right)\cr
\le &\sum_{i=1}^s\left(L_G\|\vx^k-(\vy_{\neq i}^{(i)},\vz_i)\|+2\tilde{L}_i\|\vx_i^k-\vz_i\|\right)\cr
\le &\sum_{i=1}^s\left((L_G+2L)\|\vx_i^k-\vz_i\|+L_G\sum_{j\neq i}\|\vx_j^k-\vy_j^{(i)}\|\right),
\end{align}
where in the fourth inequality, we have used the Lipschitz continuity of $\nabla_{\vx_i}f(\vx)$ with respect to $\vx$, and the last inequality uses $\tilde{L}_i\le L$.
Now use \eqref{cond6}, \eqref{yapp}, \eqref{zapp} and also the triangle inequality to have the desired result.

\subsection{Proof of Lemma \ref{lem:rate-seq1}}\label{app:rate-seq1}
The proof follows that of Theorem 2 of \cite{attouch2009convergence}.
When $\gamma\ge 1$, since $0\le A_{k-1}-A_k\le 1,\forall k\ge K$, we have $(A_{k-1}-A_k)^\gamma\le A_{k-1}-A_k$, and thus \eqref{cond-seq1} implies that for all $k\ge K$, it holds that
$A_k\le (\alpha+\beta)(A_{k-1}-A_k)$, from which item 1 immediately follows.

When $\gamma<1$, we have $(A_{k-1}-A_k)^\gamma\ge A_{k-1}-A_k$, and thus \eqref{cond-seq1} implies that for all $k\ge K$, it holds that
$A_k\le (\alpha+\beta)(A_{k-1}-A_k)^\gamma$. Letting $h(x)=x^{-1/\gamma}$, we have for $k\ge K$,
\begin{align*}
1\le & (\alpha+\beta)^{1/\gamma}(A_{k-1}-A_k)A_k^{-1/\gamma}\\
= & (\alpha+\beta)^{1/\gamma}\left(\frac{A_{k-1}}{A_k}\right)^{1/\gamma}(A_{k-1}-A_k)A_{k-1}^{-1/\gamma}\\
\le &(\alpha+\beta)^{1/\gamma}\left(\frac{A_{k-1}}{A_k}\right)^{1/\gamma}\int_{A_k}^{A_{k-1}}h(x)dx\\
=&\frac{(\alpha+\beta)^{1/\gamma}}{1-1/\gamma}\left(\frac{A_{k-1}}{A_k}\right)^{1/\gamma}\left(A_{k-1}^{1-1/\gamma}-A_k^{1-1/\gamma}\right),
\end{align*}
where we have used nonincreasing monotonicity of $h$ in the second inequality. Hence,
\begin{equation}\label{cond-seq2}A_{k}^{1-1/\gamma}-A_{k-1}^{1-1/\gamma}\ge \frac{1/\gamma-1}{(\alpha+\beta)^{1/\gamma}}\left(\frac{A_{k}}{A_{k-1}}\right)^{1/\gamma}.
\end{equation}
Let $\mu$ be the positive constant such that
\begin{equation}\label{cond-seq3}\frac{1/\gamma-1}{(\alpha+\beta)^{1/\gamma}}\mu =\mu^{\gamma-1}-1.
\end{equation}
Note that the above equation has a unique solution $0<\mu<1$. We claim that \begin{equation}\label{cond-seq4}A_{k}^{1-1/\gamma}-A_{k-1}^{1-1/\gamma}\ge \mu^{\gamma-1}-1,\ \forall k\ge K.
\end{equation} It obviously holds from \eqref{cond-seq2} and \eqref{cond-seq3} if $\big(\frac{A_{k}}{A_{k-1}}\big)^{1/\gamma}\ge \mu$. It also holds if $\big(\frac{A_{k}}{A_{k-1}}\big)^{1/\gamma}\le \mu$ from the arguments
\begin{align*}
\left(\frac{A_{k}}{A_{k-1}}\right)^{1/\gamma}\le \mu \Rightarrow & A_k\le \mu^\gamma A_{k-1}\Rightarrow A_k^{1-1/\gamma}\ge\mu^{\gamma-1}A_{k-1}^{1-1/\gamma}\\
\Rightarrow & A_{k}^{1-1/\gamma}-A_{k-1}^{1-1/\gamma}\ge (\mu^{\gamma-1}-1)A_{k-1}^{1-1/\gamma} \ge \mu^{\gamma-1}-1,
\end{align*}
where the last inequality is from $A_{k-1}^{1-1/\gamma}\ge 1$. Hence, \eqref{cond-seq4} holds, and summing it over $k$ gives
$$A_k^{1-1/\gamma}\ge A_k^{1-1/\gamma}-A_K^{1-1/\gamma}\ge (\mu^{\gamma-1}-1)(k-K),$$
which immediately gives item 2 by letting $\nu=(\mu^{\gamma-1}-1)^{\frac{\gamma}{\gamma-1}}$.

\section{Solutions of \eqref{newupdate}}\label{app:newupdate} In this section, we give closed form solutions to both updates in \eqref{newupdate}. First, it is not difficult to have the solution of  \eqref{newupdate-y}: 
$$\vy_{\pi_i}^{k+1}=\max\left(0,\big(\vX_{\pi_{<i}}^{k+1}(\vY_{\pi_{<i}}^{k+1})^\top+\vX_{\pi_{>i}}^{k}(\vY_{\pi_{>i}}^{k})^\top-\vM\big)^\top\vx_{\pi_i}^{k+1}\right).$$ Secondly, since $L_{\pi_i}^k>0$, it is easy to write \eqref{newupdate-x} in the form of
$$\min_{\vx\ge0,\,\|\vx\|=1}\frac{1}{2}\|\vx-\va\|^2+\vb^\top\vx+C,$$
which is apparently equivalent to 
\begin{equation}\label{eqx}
\max_{\vx\ge0,\,\|\vx\|=1} \vc^\top\vx,
\end{equation}
which $\vc=\va-\vb$. Next we give solution to \eqref{eqx} in three different cases.

\textbf{Case 1: $\vc<0$.} Let $i_0=\argmax_i c_i$ and $c_{\max}=c_{i_0}<0$. If there are more than one components equal $c_{\max}$, one can choose an arbitrary one of them. Then the solution to \eqref{eqx} is given by $x_{i_0}=1$ and $x_i=0,\forall i\neq i_0$ because for any $\vx\ge0$ and $\|\vx\|=1$, it holds that
$$\vc^\top\vx\le c_{\max}\|\vx\|_1\le c_{\max}\|\vx\|=c_{\max}.$$

\textbf{Case 2: $\vc\le 0$ and $\vc\not<0$.} Let $\vc=(\vc_{I_0},\vc_{I_-})$ where $\vc_{I_0}=\vzero$ and $\vc_{I_-}<0$. Then the solution to \eqref{eqx} is given by $\vx_{I_-}=\vzero$ and $\vx_{I_0}$ being any vector that satisfies $\vx_{I_0}\ge0$ and $\|\vx_{I_0}\|=1$ because $\vc^\top\vx\le 0$ for any $\vx\ge 0$.

\textbf{Case 3: $\vc\not\le 0$.} Let $\vc=(\vc_{I_+},\vc_{I_+^c})$ where $\vc_{I_+}>0$ and $\vc_{I_+^c}\le 0$. Then \eqref{eqx} has a unique solution given by $\vx_{I_+}=\frac{\vc_{I_+}}{\|\vc_{I_+}\|}$ and $\vx_{I_+^c}=\vzero$ because for any $\vx\ge0$ and $\|\vx\|=1$, it holds that
$$\vc^\top\vx\le \vc_{I_+}^\top\vx_{I_+}\le \|\vc_{I_+}\|\cdot\|\vx_{I_+}\|\le \|\vc_{I_+}\|\cdot\|\vx\|=\|\vc_{I_+}\|,$$
where the second inequality holds with equality if and only if $\vx_{I_+}$ is collinear with $\vc_{I_+}$, and the third inequality holds with equality if and only if $\vx_{I_+^c}=\vzero$.

\section{Proofs of convergence of some examples}
In this section, we give the proofs of the theorems in section \ref{sec:example}.

%
%

\subsection{Proof of Theorem \ref{thm:rri}}\label{app:thm-rri}
Through checking the assumptions of Theorem \ref{thm:global-ebpg}, we only need to verify the boundedness of $\{\vY^k\}$ to show Theorem \ref{thm:rri}. Let $\vE^k=\vX^k(\vY^k)^\top-\vM$. Since every iteration decreases the objective, it is easy to see that $\{\vE^k\}$ is bounded. Hence, $\{\vE^k+\vM\}$ is bounded, and $$a=\sup_k\max_{i,j}(\vE^k+\vM)_{ij}<\infty.$$ Let $y_{ij}^k$ be the $(i,j)$-th entry of $\vY^k$. Thus the columns of $\vE^k+\vM$ satisfy
\begin{equation}\label{abound}
a\ge\ve_i^k+\vm_i=\sum_{j=1}^p y_{ij}^k \vx_j^k,\,\forall i,
\end{equation}
where $\vx_j^k$ is the $j$-th column of $\vX^k$. Since $\|\vx_j^k\|=1$, we have $\|\vx_j^k\|_\infty\ge 1/\sqrt{m},\,\forall j$. Note that \eqref{abound} implies each component of $\sum_{j=1}^p y_{ij}^k \vx_j^k$ is no greater than $a$. Hence from nonnegativity of $\vX^k$ and $\vY^k$ and noting that at least one entry of $\vx_j^k$ is no less than $1/\sqrt{m}$, we have $y_{ij}^k\le a\sqrt{m}$ for all $i,j$ and $k$. This completes the proof.

\bibliographystyle{siam}

\end{document}